\documentclass[12pt,a4paper]{amsart}

\usepackage[left=25truemm,right=25truemm,top=30truemm,bottom=30truemm]{geometry}

\usepackage{mathtools}
\usepackage{amssymb}
\usepackage{thmtools}
\usepackage{mathrsfs}

\usepackage[dvipdfmx]{graphicx}
\usepackage{subcaption}
\usepackage{float}
\usepackage[dvipsnames]{xcolor}
\usepackage{framed}
\usepackage[all]{xy}
\usepackage{listings}
\usepackage{setspace}
\usepackage[english]{babel}
\usepackage{amscd}
\usepackage{enumitem}

\usepackage{tikz}
\usetikzlibrary{trees}

\usepackage{hyperref}
\usepackage{cleveref}

\usepackage{amsthm}

\makeatletter

\renewcommand{\section}{%
  \@startsection{section}{1}%
    {\z@}%
    {3ex}%
    {1ex}%
    {\normalfont\scshape\centering}%
}

\renewcommand{\subsection}{%
  \@startsection{subsection}{2}%
    {\z@}%
    {2ex}%
    {-.5em}%
    {\normalfont\bfseries}%
}

\makeatother

\newtheoremstyle{myplain}
  {10pt} 
  {12pt}
  {\itshape}                   
  {}                           
  {\bfseries}                  
  {.}                          
  {8pt}     
  {}

\newtheoremstyle{mydefinition}
  {10pt} 
  {12pt}
  {\normalfont}
  {}
  {\bfseries}
  {.}
  {8pt}
  {}

\theoremstyle{myplain}
\newtheorem{thm}{Theorem}[section]
\newtheorem{lem}[thm]{Lemma}
\newtheorem{cor}[thm]{Corollary}
\newtheorem{prop}[thm]{Proposition}

\theoremstyle{mydefinition}
\newtheorem{example}[thm]{Example}
\newtheorem{defi}[thm]{Definition}
\newtheorem{rem}[thm]{Remark}

\theoremstyle{myplain}
\newtheorem*{introthm}{Theorem~\ref{Thm4.3}}
\newtheorem*{introthm2}{Theorem~\ref{Thm4.8}}
\newtheorem*{introthm3}{Theorem~\ref{Thm5.12}}
\newtheorem*{introcor}{Corollary~\ref{Cor5.18}}
\newtheorem*{introthm4}{Theorem~\ref{Thm7.7}}

\usepackage{enumitem}

\newlist{examplelist}{enumerate}{1}

\setlist[examplelist,1]{
  label=\textup{(\arabic*)},
  ref=(\arabic*),
  leftmargin=2.25em,
  labelsep=.6em,
  topsep=.3\baselineskip,
  itemsep=.2\baselineskip,
  parsep=0pt,
  partopsep=0pt
}

\AfterEndEnvironment{proof}{\par\addvspace{1\baselineskip}}

\DeclareMathOperator{\Int}{Int}
\DeclareMathOperator{\rank}{rank}

\Crefname{figure}{Figure}{Figures}
\numberwithin{equation}{section}

\newcommand{\id}{{\rm{id}}}

\title{Gluing $\partial$-Morin maps on manifolds with boundary \\and its applications}

\author[K. Iwakura]{Koki Iwakura}
\address{Joint Graduate School of Mathematics for Innovation, Kyushu University, Motooka 744, Nishiku, Fukuoka 819-0395, Japan.}
\email{iwakura.koki0105@gmail.com}

\date{}

\begin{document}
\maketitle

\begin{abstract}
We study $\partial$-Morin maps, a class of smooth maps from manifolds with boundary such that both the maps themselves and their restrictions to the boundary have only Morin singular points, and the singular point sets of the maps are disjoint from the boundary.
We develop a gluing construction that provides a unified framework for studying maps from manifolds with boundary through corresponding maps from closed manifolds. 
Using this framework, we derive Euler characteristic formulas for $\partial$-Morin maps and a congruence relating the signature to the self-intersection number of the singular point set for $\partial$-fold maps from $4$-manifolds to $3$-manifolds. 
We also establish an inequality relating singular fibers of stable maps from a $3$-manifold to the plane to the simplicial volume of the source manifold.
We further apply these results to the existence problem for $\partial$-fold maps and to the non-singular extension problem.
\end{abstract}

\vspace{-5pt}

\tableofcontents

\section{Introduction}

\subsection{Gluing construction for maps on manifolds with boundary}
The \emph{singular points} of a smooth map are the points of the source manifold at which its differential fails to have maximal rank.
Although this is a local condition, the singular points of a map can impose constraints on the global structures such as topology and smooth structure of the source manifold~\cite{OSS, Sae2, SS}.
However, previous research has focused mainly on maps from closed manifolds, with comparatively less attention paid to maps from manifolds with boundary.
The presence of a boundary introduces additional phenomena that do not arise in the closed case: one must simultaneously consider the singular points of the map itself and those of its restriction to the boundary.
To address these difficulties, various methods tailored to specific settings have been developed~\cite{BNR, Haj, Iwa3, Iwa4, JR, SY2, Shi}.
However, a general method applicable to broader classes of maps has yet to be established.
In this paper, we take a different approach: we relate maps from manifolds with boundary to maps from closed manifolds.

To this end, we build on the gluing construction introduced by Levine~\cite{Lev2}, which produces a map from a closed manifold by gluing together two maps from manifolds with boundary whose restrictions to the boundary are compatible under the gluing. 
We apply this construction to \emph{$\partial$-Morin maps}, a broad class of maps from manifolds with boundary, and obtain several results relating their singular points to global properties of their source manifolds.
In this way, the gluing construction provides a common framework for studying maps from manifolds with boundary while allowing us to draw on results established in the closed setting.
The key point is that the singular points of the restriction to the boundary give rise, under the gluing construction, to ordinary Morin singular points of the resulting map on a closed manifold.

\subsection{Relative formulas for $\partial$-Morin maps}
\emph{Morin singular points} form a fundamental class of singular points, generalizing Morse critical points to maps with higher-dimensional targets.
Smooth maps having only Morin singular points are called \emph{Morin maps}; this class includes a wide variety of maps on closed manifolds, including Morse functions.
In this paper, we introduce $\partial$-Morin maps as a counterpart of Morin maps for manifolds with boundary.
A $\partial$-Morin map is a smooth map from a manifold with boundary such that both the map itself and its restriction to the boundary have only Morin singular points, and the singular point set of the map is disjoint from the boundary of the source manifold.
This class encompasses several previously studied classes, including $m$-functions~\cite{Haj, JR}, boundary special generic maps~\cite{Iwa3, Shi}, and submersions with definite folds~\cite{Iwa4}.

For $\partial$-Morin maps, we establish relative versions of Fukuda-type theorems~\cite{DF, Fuk, Sae3, Rui} and Sakuma's theorem~\cite{Sak}.
Fukuda-type theorems express the Euler characteristic of the source manifold in terms of the stratification of the singular point set according to the type of Morin singular points.
For Morse functions, these theorems recover the classical formula expressing the Euler characteristic as the alternating sum of the numbers of critical points of each index~\cite{Mil}.
In this paper, we obtain the following relative formula by incorporating contributions from the singular points of the restriction to the boundary.

\begin{introthm}\label{Thm4.3}
Let $F\colon N\to\mathbb{R}^l$ be a $\partial$-Morin map of a compact $n$-dimensional manifold $N$ with boundary, where $n>l$. 
If $n$ is even and $l$ is odd, then we have
$$
\chi(N)
=
\sum_{k:\vspace{1pt}\text{odd}}\Big(\chi(\overline{A_k^+(F)})-\chi(\overline{A_k^-(F)})\ \Big)
+
\frac{1}{2}\sum_{k:\vspace{1pt}\text{odd}}\Big(\chi(\overline{{}^{\partial\!}A_k^+(F)})-\chi(\overline{{}^{\partial\!}A_k^-(F)})\Big). 
$$
Here, $\chi$ is the Euler characteristic of the relevant set, and the sets $A_k^\pm(F)$ and ${}^{\partial\!}A_k^\pm(F)$ are the submanifolds of $N$ defined in Section~4.
\end{introthm}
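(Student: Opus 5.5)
The plan is to reduce the statement to the closed Fukuda-type theorem for Morin maps (even source dimension, odd target dimension) by a gluing/doubling construction. The closed result is
\[
\chi(M)=\sum_{k\ \mathrm{odd}}\bigl(\chi(\overline{A_k^+(G)})-\chi(\overline{A_k^-(G)})\bigr)
\]
for a Morin map $G\colon M\to\mathbb{R}^l$ of a closed $n$-manifold with $n$ even and $l$ odd.

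\textbf{Step 1: build a closed Morin map.} Since $F$ is $\partial$-Morin, the singular set $S(F)$ lies in $\Int N$, so $F$ is a submersion near $\partial N$. We replace $F$ near the boundary by a map $\widetilde F\colon N\to\mathbb{R}^{l}\times[0,\infty)$, or rather build a closed map following Levine's construction. Take a collar $\partial N\times[0,1)$ on which $F(x,t)=F|_{\partial N}(x)$, after a homotopy through submersions. Glue $N$ to its mirror copy $-N$ along $\partial N$ to get the closed double $DN$. On $DN$ we define $G\colon DN\to\mathbb{R}^l$ to be $F$ on each copy, modified in the collar so that $G$ is smooth and Morin.

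The naive double $G(x,t)=F|_{\partial N}(x)$ on a bicollar is already a smooth map. Its singular set near $\partial N$ is $S(F|_{\partial N})\times(-1,1)$, which is not Morin in the transversality sense. To fix this, we perturb using a function of $t$, for example $G(x,t)=F(x)+t^2 v(x)$ for a suitable vector field $v$ on the target direction normal to $dF|_{\partial N}$. We choose $v$ so that on $\Sigma:=S(F|_{\partial N})$ the map becomes a Morin singularity of one higher type. Concretely, an $A_k$ point of $F|_{\partial N}$ becomes an $A_k$ point of $G$ lying on the slice $t=0$, with the extra $\pm t^2$ term contributing a definite direction. This is the local computation where the sign conventions $\pm$ of ${}^{\partial}A_k^\pm(F)$ (defined in Section~4) must be arranged so that boundary $A_k^\pm$ points become $A_k^\pm$ points of $G$.

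\textbf{Step 2: count.} On the closed manifold $DN$ we have $\chi(DN)=2\chi(N)-\chi(\partial N)$. The singular strata of $G$ are two copies of $A_k^\pm(F)$ together with one copy of the boundary strata ${}^{\partial}A_k^\pm(F)$ sitting in $\partial N$. Applying the closed formula gives
\[
2\chi(N)-\chi(\partial N)=2\sum_{k\ \mathrm{odd}}\bigl(\chi(\overline{A_k^+(F)})-\chi(\overline{A_k^-(F)})\bigr)+\sum_{k\ \mathrm{odd}}\bigl(\chi(\overline{{}^{\partial}A_k^+})-\chi(\overline{{}^{\partial}A_k^-})\bigr).
\]
Since $\partial N$ is a closed odd-dimensional manifold, $\chi(\partial N)=0$. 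Dividing by $2$ yields the claim.

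\textbf{Main obstacle.} The hard part is Step 1. We must check that the perturbed double is genuinely Morin near $\partial N$, and that the types and signs match. Type matching means an $A_k$ point of the restriction becomes an $A_k$ point of $G$, not $A_{k+1}$, since the stratum dimensions must match. Sign matching means the $t^2$ term's sign is absorbed consistently into the definition of ${}^{\partial}A_k^\pm$. If the $t^2$ trick fails, the fallback is the alternative gluing. That gluing pastes $N$ with $\partial N\times$ (a half-fold model $t\mapsto t^2$ in an extra target coordinate), which makes $\Sigma$ a fold-type locus of the closed map. The closures and Euler characteristics of the closures then need care only at lower strata, which are handled as in the closed proof.
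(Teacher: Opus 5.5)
Your global route is the paper's: double $N$, apply Ruiz's closed formula (\Cref{Thm4.1}, valid since $n-l$ is odd) to the doubled map, use $\chi(\partial N)=0$, and divide by $2$. Your Step~2 is fine. The gap is in Step~1, where the real content of the proof lies. As written, Step~1 contains a step that fails and leaves the decisive verification undone.

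First, $F$ cannot be homotoped through submersions to $F(x,t)=F|_{\partial N}(x)$ on a collar. That map has rank $l-1$ along $S(F|_{\partial N})\times[0,1)$, so it puts singular points on $\partial N$ and destroys the $\partial$-Morin hypothesis.

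Second, the signs are not something you may ``arrange''. ${}^{\partial}\nu(p)$ is already fixed in the statement by the sign $\sigma$ of the linear term in the boundary normal form $H(x)+\sigma x_n$ (\Cref{Prop3.3}), i.e.\ by the inward normal derivative of $F$. Suppose you take an arbitrary transverse field $v$ in $F|_{\partial N}(x)+t^2v(x)$ whose transverse component has sign $-\sigma$. Then the new quadratic term flips the parity of $\nu$. Moreover, joining $G$ to $F$ farther out (which you never specify) forces extra singular points that your Step~2 tally omits. Already for $l=1$, $n=2$: interpolating $x^2+h(t)$ with $h(t)=-t^2$ near $0$ and $h(t)=|t|$ farther out forces critical points at $(0,\pm t_0)$ for some $t_0>0$.

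Two smaller problems: ``one higher type'' contradicts your own type-matching requirement, and the fallback with an extra target coordinate would break the parity hypothesis on $n-l$ that Ruiz's formula needs.

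The missing idea is to make the $t^2$ come from $F$ itself rather than from a chosen $v$. Use no homotopy at all; just reparametrize the collar:
\[
D(F)(\Phi(x,t))=F\circ\phi(x,g(t)),
\]
with $g$ even, $g(t)=t^2$ near $0$, $g(t)=|t|$ near $\pm1$, and $tg'(t)>0$ for $t\neq0$. This map is smooth and equals $F$ away from $\partial N$ on both copies. Its singular set is two copies of $S(F)$ plus one copy of $S(F|_{\partial N})$ (\Cref{Lem3.1}).

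The local check is then exactly what you deferred. Take $p\in A_k(F|_{\partial N})$.
\begin{itemize}
\item Regularity of $F$ at $p$ gives $\partial_s(y_l\circ F\circ\phi)(0)\neq0$, which yields the boundary-preserving normal form $H(x)+\sigma x_n$.
\item Substitute $s=t^2$ and write $y_i\circ F\circ\phi=x_i+sB_i$ and $y_l\circ F\circ\phi=H(x)+sB_l$ by Hadamard's lemma.
\item Change coordinates by $X_i=x_i+t^2B_i$ for $i\le l-1$, and then $T=t\sqrt{\sigma C}$ with $C(0)=B_l(0)$. This gives $H(X)+\sigma T^2$ (\Cref{Lem3.5}).
\end{itemize}
Hence $p$ is an $A_k$ point of $D(F)$ (not $A_{k+1}$) with $\nu={}^{\partial}\nu(p)$, which is precisely what your Step~2 needs. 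In your language, this amounts to taking $v=\partial_sF(x,0)$ and keeping the higher-order terms.
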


We obtain this formula by combining the gluing construction with a Fukuda-type theorem of Ruiz~\cite{Rui}.
Moreover, under suitable assumptions on the dimensions and the types of singular points, the same construction, together with the results of Saeki~\cite{Sae3} and Dutertre--Fukui~\cite{DF}, yields analogous formulas for maps into general target manifolds in \Cref{Thm4.5} and \Cref{Thm4.6}.

We next establish a relative version of Sakuma's theorem.
A \emph{$\partial$-fold map} is a $\partial$-Morin map whose singular points and those of the restriction to the boundary are all fold points, which form a special class of Morin singular points.
For stable maps from closed oriented $4$-manifolds to $\mathbb{R}^3$, Sakuma~\cite{Sak} proved a congruence modulo $4$ relating the signature of the source manifold to the self-intersection number of the singular point set.
For $\partial$-fold maps from $4$-manifolds with boundary to $3$-manifolds, we obtain the following congruence.

\begin{introthm2}\label{Thm4.8}
Let $N$ be a compact oriented $4$-manifold whose boundary $\partial N$ is an integral homology sphere and let $L$ be a connected parallelizable $3$-manifold. 
If there exists a $\partial$-fold map $F\colon N\to L$, then 
$$
\sigma(N)+S(F)\cdot S(F)\equiv 2+\chi({}^{\partial\!}A_1^+(F))-\chi({}^{\partial\!}A_1^-(F)) \pmod 4, 
$$
where $\sigma(N)$ is the signature of $N$, $S(F)\cdot S(F)$ is the self-intersection number of $S(F)$ in $N$, and ${}^{\partial\!}A_1^{\pm}(F)$ are the same as those in \Cref{Thm4.5}.
\end{introthm2}

We prove this theorem by combining \Cref{Thm4.5} with a formula due to Klug~\cite{Klu}.
Moreover, when the target is $\mathbb{R}^3$, the submersion on a collar neighborhood of the boundary induces a stable framing on the boundary.
The contribution to the right-hand side of the congruence in \Cref{Thm4.8} from the singular points of the restriction to the boundary admits a geometric interpretation in terms of the Hirzebruch defect of this framing.

\subsection{Singular fibers and the topology of $3$-manifolds with boundary}
A smooth map is called \emph{stable} if every sufficiently small perturbation is equivalent to it under diffeomorphisms of the source and target.
For such a map, the germ of the map along the inverse image of a singular value is called a \emph{singular fiber}.
For stable maps from closed manifolds, the types and numbers of singular fibers are known to reflect the topology of the source manifold~\cite{IK, KLP, SY1, SY2}.
Ishikawa--Koda~\cite{IK} introduced the \emph{stable map complexity} of a compact orientable $3$-manifold whose boundary is either empty or a disjoint union of tori, in terms of a weighted count of singular fibers of types ${\rm II}^2$ and ${\rm II}^3$ occurring in a class of maps called \emph{$S$-maps}.
They further established an inequality bounding the simplicial volume of the source manifold in terms of this complexity.
However, the class of $S$-maps does not include stable maps from manifolds with boundary.

In this paper, we extend the notion of stable map complexity and the associated inequality to the setting of stable maps from compact $3$-manifolds with boundary to the plane.
More precisely, for a compact $3$-manifold $N$ with boundary, we define its \emph{$\partial$-stable map complexity} $\mathrm{smc}^{\partial}(N)$ in terms of singular fibers of types $\mathrm{bII}^{20}$ and $\mathrm{bII}^{21}$ occurring in stable maps from $N$ to the plane.
Then, we establish the following inequality relating this complexity to the simplicial volume of $N$.

\begin{introthm3}
Let $N$ be a compact connected oriented $3$-manifold with toroidal boundary. 
Then, 
$$
\lVert N,\partial N\rVert V_{\text{tet}}\leq {\rm smc}^{\partial}(N)V_{\text{oct}}, 
$$
where $\lVert N,\partial N\rVert$ is the simplicial volume of $N$, and $V_{\text{tet}}=1.01…$ and $V_{\text{oct}}=3.66…$ denote the volumes of the ideal regular tetrahedron and the ideal regular octahedron, respectively. 
\end{introthm3}

To prove this inequality, we first double the map using the gluing construction and determine how singular fibers of types $\mathrm{bII}^{20}$ and $\mathrm{bII}^{21}$ give rise to singular fibers of types $\mathrm{II}^2$ and $\mathrm{II}^3$ in the resulting map.
We then apply the inequality of Ishikawa--Koda to the resulting stable map from a closed manifold.
As an application, we obtain lower bounds on the number of singular fibers occurring in stable maps from the exteriors of hyperbolic knots to the plane.

We further introduce \emph{simple stable maps}, a class of stable maps from manifolds with boundary defined by a condition on their fibers.
Such maps have no singular fibers of types $\mathrm{bII}^{20}$ or $\mathrm{bII}^{21}$.
Recall that the simplicial volume of a compact orientable irreducible $3$-manifold with empty or toroidal boundary is proportional to the total hyperbolic volume of the hyperbolic pieces in its JSJ decomposition.
Combining this observation with the inequality above, we obtain the following result relating the existence of simple stable maps to graph manifolds.

\begin{introcor}
Let $N$ be a compact connected irreducible oriented $3$-manifold with toroidal boundary. 
If $N$ admits a simple stable $\partial$-fold map into $\mathbb{R}^2$ without singular points, then $N$ is a graph manifold. 
\end{introcor}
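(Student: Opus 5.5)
The plan is to derive the corollary from the introduction's Theorem (the inequality $\lVert N,\partial N\rVert V_{\text{tet}}\leq {\rm smc}^{\partial}(N)V_{\text{oct}}$) together with the JSJ description of simplicial volume.

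\emph{Step 1: the complexity vanishes.} Let $F\colon N\to\mathbb{R}^2$ be a simple stable $\partial$-fold map without singular points. By definition, simple stable maps have no singular fibers of types $\mathrm{bII}^{20}$ or $\mathrm{bII}^{21}$. Since ${\rm smc}^{\partial}(N)$ is a (weighted) minimum over stable maps $N\to\mathbb{R}^2$ of the number of such fibers, evaluating it on $F$ gives ${\rm smc}^{\partial}(N)=0$. One has to check that $F$ is admissible in the class of maps used to define ${\rm smc}^{\partial}$; I expect this class to be exactly the stable maps into the plane. If the definition has extra conditions, I would check them from the $\partial$-fold and nonsingularity hypotheses.

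\emph{Step 2: the simplicial volume vanishes.} Applying the Theorem gives $\lVert N,\partial N\rVert V_{\text{tet}}\le 0$. Since simplicial volume is nonnegative, $\lVert N,\partial N\rVert=0$.

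\emph{Step 3: from zero volume to a graph manifold.} The boundary of $N$ is toroidal, and $N$ is compact, connected, orientable and irreducible. By the Gromov--Thurston result recalled in the introduction, $\lVert N,\partial N\rVert$ is a positive multiple of the sum of the hyperbolic volumes of the hyperbolic pieces of its JSJ decomposition. This uses geometrization: every JSJ piece is either Seifert fibered or has finite-volume hyperbolic interior. Each hyperbolic piece has positive volume, so zero simplicial volume forces the decomposition to have no hyperbolic pieces. Hence every JSJ piece is Seifert fibered, and $N$ is a graph manifold.

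\emph{Main obstacle.} The main subtlety is in Step 3, in degenerate cases. If $N$ is a solid torus, or $\partial N$ has compressible tori, the JSJ framework is usually stated for boundary-incompressible manifolds. I would handle this directly: an irreducible manifold with a compressible boundary torus is a solid torus, which is Seifert fibered, hence a graph manifold. Everything else is a direct combination of the cited results.
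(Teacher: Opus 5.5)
Your proposal is correct and matches the paper's proof of \Cref{Cor5.18}. The map $F$ itself lies in the class defining $\mathrm{smc}^{\partial}(N)$, namely stable $\partial$-fold maps with $S(F)=\emptyset$, and has no fibers of type $\mathrm{bII}^{20}$ or $\mathrm{bII}^{21}$, so $\mathrm{smc}^{\partial}(N)=0$. \Cref{Thm5.12} then gives $\lVert N,\partial N\rVert=0$, and Soma's formula (\Cref{Thm5.17}) excludes hyperbolic JSJ pieces. Your extra handling of the compressible-boundary case, where $N$ must be a solid torus, is a sound refinement that the paper leaves implicit.
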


Saeki~\cite{Sae4} proved that a closed orientable $3$-manifold admits a simple stable map to the plane if and only if it is a graph manifold.
\Cref{Cor5.18} provides a relative analogue of one direction of this characterization.

\subsection{Application to the existence problem for $\partial$-fold maps}
A smooth map whose singular points are all fold points is called a \emph{fold map}; this class of maps generalizes Morse functions.
Although every closed smooth manifold admits a Morse function, a fold map to a prescribed higher-dimensional target need not exist.
This leads to the following question: when does a given closed manifold admit a fold map to a prescribed target, such as a Euclidean space of a fixed dimension?
This existence problem is fundamental to understanding the relationship between fold points and the global structure of closed manifolds.
Such problems have been extensively studied; see, for example, \cite{And, Eli, SSS}.

In this paper, we study the existence problem for $\partial$-fold maps as an analogue of the problem above.
When the target is $\mathbb{R}$, a $\partial$-fold map is precisely an $m$-function, and every compact manifold with boundary admits such a function~\cite{JR}.
However, for targets of dimension at least $2$, the existence problem remains open in general.
Using the gluing construction together with the relative results established above, we obtain both existence and non-existence results for $\partial$-fold maps.
As a family of test cases, we consider punctured complex projective spaces $\mathbb{C}P^m\setminus\Int D^{2m}$ and investigate the existence of $\partial$-fold maps from these manifolds to $\mathbb{R}^l$.
For $1\leq m,l\leq 6$, \Cref{Fig1} summarizes the existence and non-existence results and indicates the cases not settled by our methods.
The case $m=2$ and $l=3$, which was left open in earlier work~\cite{Iwa3}, remains unresolved.
Nevertheless, by applying the relative version of Sakuma's theorem in \Cref{Thm4.8}, we obtain a necessary condition for the existence of a $\partial$-fold map $\mathbb{C}P^2\setminus\Int D^4\to\mathbb{R}^3$.
This condition shows that the existence problem depends not only on the global properties of the source manifold but also on the behavior of the map on a collar neighborhood of its boundary.
A similar dependence on boundary data appears in earlier studies of the existence problems for boundary special generic maps~\cite{Iwa3} and submersions with definite folds~\cite{Iwa4}, both of which are special classes of $\partial$-fold maps.

\begin{figure}[t]
\centering
\begin{tikzpicture}[scale=0.85]

\draw[step=1, gray!35, thin] (0.5,0.5) grid (6.5,6.5);

\draw[->, thick] (0.5,0.5) -- (6.7,0.5)
  node[right] {$\ell$};
\draw[->, thick] (0.5,0.5) -- (0.5,6.7)
  node[above] {$m$};

\foreach \x in {1,...,6}
  \node[below] at (\x,0.5) {\(\x\)};

\foreach \y in {1,...,6}
  \node[left] at (0.5,\y) {\(\y\)};

%l=1
\draw[blue!70!black, line width=1pt] (1,1) circle[radius=0.25];
\draw[blue!70!black, line width=1pt] (1,2) circle[radius=0.25];
\draw[blue!70!black, line width=1pt] (1,3) circle[radius=0.25];
\draw[blue!70!black, line width=1pt] (1,4) circle[radius=0.25];
\draw[blue!70!black, line width=1pt] (1,5) circle[radius=0.25];
\draw[blue!70!black, line width=1pt] (1,6) circle[radius=0.25];

%l=2
\draw[gray!100, line width=1.5pt] (1.82,1) -- (2.18,1);
\node[green!50!black] at (2,2) {\Large$\triangle$};
\draw[blue!70!black, line width=1pt] (2,3) circle[radius=0.25];
\node[green!50!black] at (2,4) {\Large$\triangle$};
\draw[blue!70!black, line width=1pt] (2,5) circle[radius=0.25];
\node[green!50!black] at (2,6) {\Large$\triangle$};

%l=3
\draw[gray!100, line width=1.5pt] (2.82,1) -- (3.18,1);
\node[green!50!black] at (3,2) {\Large$\triangle$};
\draw[blue!70!black, line width=1pt] (3,3) circle[radius=0.25];
\draw[blue!70!black, line width=1pt] (3,4) circle[radius=0.25];
\draw[blue!70!black, line width=1pt] (3,5) circle[radius=0.25];
\draw[blue!70!black, line width=1pt] (3,6) circle[radius=0.25];

%l=4
\draw[gray!100, line width=1.5pt] (3.82,1) -- (4.18,1);
\draw[gray!100, line width=1.5pt] (3.82,2) -- (4.18,2);
\draw[blue!70!black, line width=1pt] (4,3) circle[radius=0.25];
\node[green!50!black] at (4,4) {\Large$\triangle$};
\node[red!75!black]   at (4,5) {\Large$\times$};
\node[red!75!black]   at (4,6) {\Large$\times$};

%l=5
\draw[gray!100, line width=1.5pt] (4.82,1) -- (5.18,1);
\draw[gray!100, line width=1.5pt] (4.82,2) -- (5.18,2);
\draw[blue!70!black, line width=1pt] (5,3) circle[radius=0.25];
\node[green!50!black] at (5,4) {\Large$\triangle$};
\node[green!50!black] at (5,5) {\Large$\triangle$};
\node[green!50!black] at (5,6) {\Large$\triangle$};

%l=6
\draw[gray!100, line width=1.5pt] (5.82,1) -- (6.18,1);
\draw[gray!100, line width=1.5pt] (5.82,2) -- (6.18,2);
\draw[gray!100, line width=1.5pt] (5.82,3) -- (6.18,3);
\node[green!50!black] at (6,4) {\Large$\triangle$};
\node[red!75!black]   at (6,5) {\Large$\times$};
\node[red!75!black]   at (6,6) {\Large$\times$};

\end{tikzpicture}

\caption{
For $\partial$-fold maps from $\mathbb{C}P^{m}\setminus\Int D^{2m}$ to $\mathbb{R}^l$. 
A circle, a cross, and a triangle indicate existence,
non-existence, and an open case, respectively.
A thick dash indicates a case not considered in this paper.
}
\label{Fig1}
\end{figure}

\subsection{Applications to the non-singular extension problem}
The \emph{non-singular extension problem} asks whether a map from a closed manifold extends to a submersion from a compact manifold whose boundary is the given source manifold.
The existence of such an extension is closely related to the global properties of the original map.
Therefore, this problem has long been studied as a fundamental question in the global theory of smooth maps on closed manifolds.
Blank--Laudenbach~\cite{BL} studied this problem for Morse functions on $1$-dimensional manifolds, and subsequent work treated Morse functions on higher-dimensional manifolds~\cite{Cur, Sei}.
More general maps have also been studied, but most existing results impose restrictions on the dimensions of the source and target manifolds or on the types of singular points allowed~\cite{Iwa1, Iwa2, Iwa3, Iwa4}.
A central difficulty lies in constructing an extension with no singular points while keeping its restriction to the boundary fixed.

In this paper, we consider a slightly more refined formulation of the non-singular extension problem: given a submersion defined on a collar neighborhood of the boundary whose restriction to the boundary is the prescribed map, we ask whether this submersion extends to the whole manifold.
In particular, the existence of a non-singular extension in this refined sense implies the existence of a non-singular extension in the sense described above.
This formulation appears naturally in several previous studies~\cite{BL, Cur, Sei, Iwa1, Iwa2}.
Although various obstructions to the existence of non-singular extensions have been obtained, many of them are not straightforward to verify in concrete examples.
Using the relative Fukuda-type theorems established in this paper, we derive readily computable necessary conditions for the existence of non-singular extensions.
In particular, we obtain such conditions for Morse functions and for fold maps whose singular point sets are homeomorphic to finite disjoint unions of standard spheres, and illustrate their applications through explicit examples.
We further study non-singular extensions of Morin maps from closed oriented $3$-manifolds to $\mathbb{R}^3$.
Using the gluing construction, we define an invariant associated with an extension of the prescribed map and use it to compare non-singular extensions with extensions that are not necessarily non-singular.
This leads to the following result, which shows that the existence of a non-singular extension imposes restrictions on all other extensions of the same prescribed map on a collar neighborhood of the boundary.

\begin{introthm4}\label{Thm7.7}
Let $g\colon M\times[0,1)\to L$ be a submersion such that $g|_{M\times\{0\}}$ is a Morin map from a closed oriented $3$-manifold $M\times\{0\}$ to an orientable $3$-manifold $L$. 
Suppose that $g$ admits a non-singular extension $F\colon N\to L$.
Then, for every $\partial$-Morin map $G\colon\widetilde{N}\to L$ as an extension of $g$, we have 
$$
3(\sigma(\widetilde{N})-\sigma(N))=S(G)\cdot S(G).
$$
In particular, if $\sigma(N)=\sigma(\widetilde{N})$, then $S(G)\cdot S(G)=0$.
\end{introthm4}

The proof proceeds by comparing the invariants associated with a non-singular extension and an arbitrary extension of the same prescribed map near the collar neighborhood.
Thus, the theorem exhibits a relation between different extensions of a fixed map near the boundary, rather than merely giving an obstruction to a single extension.
As an application, we construct prescribed submersions near the boundary that admit no non-singular extension to any compact oriented $4$-manifold.

\subsection*{Organization}
The paper is organized as follows. 
Section~2 reviews the basic terminology and results on singularity theory of smooth maps used throughout the paper. 
Section~3 introduces the gluing construction and establishes its basic properties. 
In Section~4, we use this construction to prove relative versions of the theorems of Fukuda and Sakuma and present several related results. 
Section~5 establishes a relative version of the theorem of Ishikawa--Koda and gives its applications. 
Section~6 applies the preceding results to the existence problem for $\partial$-fold maps. Finally, 
Section~7 presents applications to the non-singular extension problem.

\subsection*{Notation and conventions}
Unless otherwise specified, all manifolds and maps between them are assumed to be of class $C^\infty$. 
The boundary of an oriented manifold with boundary is endowed with the orientation induced by the outward-first convention.

\subsection*{Acknowledgements}
The author would like to thank Osamu Saeki and Noriyuki Hamada for their valuable comments. 
The author is also grateful to Rustam Sadykov for stimulating discussions during the author's stay at Kansas State University, which contributed to the initial development of the ideas pursued in this paper. 
The author further thanks Masato Tanabe for helpful discussions concerning Levine's work. 
This work was supported by the WISE program (MEXT) at Kyushu University and by JSPS KAKENHI Grant number JP26KJ1778.

\section{Preliminaries}
In this section, we first review some basic terminology and facts from singularity theory of maps.
After that, we introduce $\partial$-Morin maps and their basic properties.
For general background and terminology in singularity theory of maps, we refer the reader to~\cite{GG}.

\subsection{Singular points of maps}
In this subsection, we review the definition of singular points of maps and introduce the notation associated with them. 
Throughout the following three subsections, let $M$ be a compact $m$-dimensional manifold, let $L$ be an $l$-dimensional manifold, and let $f\colon M\to L$ be a map.

\begin{defi}
A point $p\in M$ is called a \emph{singular point} of $f$ if $\rank df_p<\min\{m,l\}$. 
Otherwise, $p$ is called a \emph{regular point} of $f$. 
A point $q\in L$ is called a \emph{singular value} of $f$ if there exists a singular point $p\in M$ of $f$ such that $f(p)=q$. 
Otherwise, $q$ is called a \emph{regular value} of $f$. 
In particular, when all points of $M$ are regular points of $f$, $f$ is called an \emph{immersion} if $m<l$, or $f$ is called a \emph{submersion} if $m\geq l$. 
\end{defi}

The subset of $M$ consisting of the singular points of $f$ is denoted by $S(f)$; that is, 
$$
S(f)=\{p\in M\mid \rank df_p<\min\{m,l\}\}. 
$$
This is called the \emph{singular point set} of $f$, and the image $f(S(f))\subset L$ is called the \emph{singular value set} of $f$.

\subsection{Stable maps}
In this subsection, we recall the definition of stable maps and some of their basic properties.
We first recall the definition of $C^\infty$ right-left equivalence.

\begin{defi}
Let $M_i$ be a compact $m$-dimensional manifold, let $L_i$ be an $l$-dimensional manifold, and let $f_i\colon M_i\to L_i$ be maps for $i=1,2$, where $m\geq l$.
We say that $f_1$ and $f_2$ are \emph{$C^\infty$ right-left equivalent} if there exist diffeomorphisms $\theta\colon M_1\to M_2$ and $\eta\colon L_1\to L_2$ such that the following diagram commutes:
$$
\begin{xy}
\xymatrix{
M_1 \ar[r]^{\theta} \ar[d]_{f_1}
& M_2 \ar[d]^{f_2} \\
L_1 \ar[r]_{\eta}
& L_2.
}
\end{xy}
$$
\end{defi}

\begin{rem}
By replacing the diffeomorphisms of the source and target manifolds in the above definition with homeomorphisms, one obtains the notion of $C^0$ right-left equivalence.
However, since we only use $C^\infty$ right-left equivalence in this paper, right-left equivalence always means $C^\infty$ right-left equivalence unless otherwise stated.
\end{rem}

Let $C^\infty(M,L)$ denote the space of smooth maps from $M$ to $L$ equipped with the Whitney $C^\infty$ topology.
For the definition of the Whitney $C^\infty$ topology, see~\cite{GG}.

\begin{defi}
A map $f\in C^\infty(M,L)$ is \emph{stable} if there exists an open neighborhood $U_f$ of $f$ in $C^\infty(M,L)$ such that, for every $g\in U_f$, the maps $f$ and $g$ are right-left equivalent.
Such a map $f$ is called a \emph{stable map}.
\end{defi}

Let $S^\infty(M,L)$ denote the set of stable maps from $M$ to $L$.
By Mather's results, if the dimension pair $(m,l)$ belongs to the nice dimensions, then $S^\infty(M,L)$ is dense in $C^\infty(M,L)$.
For the definition of the nice dimensions, see~\cite{Mat6}.

\subsection{Morin singular points}
In this subsection, we recall the definition of Morin singular points, which play a central role in this paper.
For the setting of this paper, we assume that $m\geq l$.

\begin{defi}\label{Def2.5}
A point $p\in M$ is called a \emph{Morin singular point} of $f$ of type $A_k$, where $1\leq k\leq l$, if there exist local coordinates $(x_1,\ldots,x_m)$ of $M$ around $p$ and $(y_1,\ldots,y_l)$ of $L$ around $f(p)$ such that $f$ is written as follows:
$$
y_i\circ f=x_i, \quad 1\leq i\leq l-1,
$$
and, for $0\leq \lambda\leq m-l$, 
$$
\begin{aligned}
y_l\circ f
&=
x_l^{k+1}
+\sum_{r=1}^{k-1}x_r x_l^{k-r}
-\sum_{r=l+1}^{l+\lambda}x_r^2
+\sum_{r=l+\lambda+1}^{m}x_r^2. 
\end{aligned}
$$
A Morin singular point of type $A_1$ is called a \emph{fold point}, and one of type $A_2$ is called a \emph{cusp point}. 
A fold point is said to be \emph{definite} if $\lambda=0$; otherwise, it is said to be \emph{indefinite}.
\end{defi}

For each $k$, we denote by $A_k(f)\subset M$ the set of Morin singular points of $f$ of type $A_k$.

\subsection{Morin maps on closed manifolds}
In this subsection, we define Morin maps on closed manifolds and recall some of their basic properties.
Let $M$ be a closed $m$-dimensional manifold, let $L$ be an $l$-dimensional manifold, and let $f\colon M\to L$ be a map.
We assume that $m\geq l$.
We first recall the definition of Morin maps.

\begin{defi}
If every singular point of $f$ is a Morin singular point, then $f$ is called a \emph{Morin map}. 
In particular, if every singular point of $f$ is a Morin singular point of type $A_1$, then $f$ is called a \emph{fold map}. 
\end{defi}

Such maps appear in many previous studies on maps defined on closed manifolds.
We recall some of them here.

\begin{example}
A function on a closed manifold is called a \emph{Morse function} if all its singular points are non-degenerate and its singular values are distinct.
By the Morse lemma~\cite{Mil}, every singular point of this map is a Morin singular point of type $A_1$, in particular, a Morse function is a fold map.
Note that a function on a closed manifold is stable if and only if it is a Morse function.
\end{example}

\begin{example}
A stable map from a closed $m$-dimensional manifold to $\mathbb{R}^2$ is a Morin map whose singular points are only of types $A_1$ and $A_2$.
Furthermore, a stable map from a closed $m$-dimensional manifold to $\mathbb{R}^3$ is a Morin map whose singular points are only of types $A_1$, $A_2$, and $A_3$.
\end{example}

\begin{rem}
When the dimension of the target is greater than three, stable maps may have singular points which are not Morin singular points.
For example, it is known that a stable map from a closed $5$-dimensional manifold to $\mathbb{R}^4$ may have singular points of type $D_4$, which are not Morin singular points, in addition to singular points of types $A_1$, $A_2$, $A_3$, and $A_4$.
See, for example, \cite[Proposition~4.1]{SY1}.
\end{rem}

The following properties of Morin maps are known.
For proofs and further properties, see~\cite{Fuk, Sae3}.

\begin{prop}
Let $f\colon M\to L$ be a Morin map. 
Then, the following hold:
\begin{enumerate}[
  label=\normalfont(\arabic*),
  leftmargin=*,
  labelsep=.5em,
  itemsep=.3em,
  topsep=.45em,
  parsep=0pt, 
  leftmargin=2em
]
\item $A_k(f)$ and $\overline{A_k(f)}$ are $(l-k)$-dimensional submanifolds of $M$.
\item $\overline{A_k(f)}=\bigcup_{i\geq k} A_i(f)$.
\item The map $f|_{A_k(f)}\colon A_k(f)\to L$ is an immersion.
\end{enumerate}
Here, $1\leq k\leq l$ and $\overline{A_k(f)}$ denotes the topological closure of $A_k(f)$.
\end{prop}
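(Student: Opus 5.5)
This is a standard local-to-global argument: each assertion is local along the singular set, so I would prove it in the normal form of \Cref{Def2.5} and then globalize using compactness and the finiteness of the possible types.

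\emph{Local analysis.} Fix $p\in A_k(f)$ and take the coordinates of \Cref{Def2.5}. The map becomes
\[
(x_1,\ldots,x_m)\mapsto (x_1,\ldots,x_{l-1},h), \qquad h=x_l^{k+1}+\sum_{r=1}^{k-1}x_rx_l^{k-r}\pm\sum_{r>l}x_r^2 .
\]
A point is singular exactly when $\partial h/\partial x_j=0$ for all $j\ge l$. This forces $x_r=0$ for $r>l$, together with
\[
(k+1)x_l^k+\sum_{r=1}^{k-1}(k-r)x_rx_l^{k-r-1}=0 .
\]
This is a smooth hypersurface in the coordinates $(x_1,\ldots,x_l)$: the equation is linear in $x_{k-1}$ with coefficient $1$ when $k\ge2$, and equals $2x_l=0$ when $k=1$. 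So $S(f)$ is locally an $(l-1)$-dimensional submanifold, parametrized by $(x_1,\ldots,x_{l-1})$ and $x_l$ after eliminating $x_{k-1}$. On $S(f)$, the type of a point is the order of vanishing of $\partial h/\partial x_l$ in $x_l$. The locus of type $\ge j$ is cut out by requiring the higher derivatives $\partial^i h/\partial x_l^i$ to vanish for $i\le j$. These equations are triangular in the variables $x_{k-1},x_{k-2},\ldots,x_{k-j}$, each with a nonzero constant leading coefficient. Hence $\bigcup_{i\ge j}A_i(f)$ is locally the submanifold of dimension $l-j$ given by $x_l=x_{k-1}=\cdots=x_{k-j+1}=0$ (after the coordinate change), and $A_k(f)$ is the locus $x_1=\cdots=x_{k-1}=x_l=0$. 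This is the usual Boardman/Morin description, and I take it as the main computational step.

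\emph{Globalization of (1) and (2).} From the local picture, near $p\in A_k(f)$ the set $A_j(f)$ for $j<k$ is the set-theoretic difference of two nested smooth submanifolds of dimensions $l-j$ and $l-j-1$. It is therefore dense in the larger one, so $p$ lies in $\overline{A_j(f)}$. Conversely, $\bigcup_{i\ge j}A_i(f)$ is closed, since it equals $S(f)$ cut out by closed conditions on derivatives. Together these give $\overline{A_j(f)}=\bigcup_{i\ge j}A_i(f)$, which is (2). Every point of this union has a neighborhood in which the union is an embedded $(l-j)$-submanifold, so $\overline{A_j(f)}$ is an $(l-j)$-dimensional submanifold. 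Since $A_j(f)$ is open in it, $A_j(f)$ is an $(l-j)$-dimensional submanifold as well. This proves (1).

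\emph{Proof of (3).} On $A_k(f)$ near $p$, the restriction is
\[
(x_k,\ldots,x_{l-1})\mapsto (0,\ldots,0,x_k,\ldots,x_{l-1},h),
\]
with the remaining coordinates vanishing. Its first components are coordinate projections, so the differential is injective and $f|_{A_k(f)}$ is an immersion.

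The main obstacle is the local elimination argument for the closures: one must check that the triangular system really is solvable, using the unit leading coefficients, in the normal form of \Cref{Def2.5}. Everything else follows formally.
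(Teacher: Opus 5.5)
Your proposal is correct, and it is the standard normal-form argument behind these facts; the paper does not prove the proposition itself but cites \cite{Fuk, Sae3}. The argument runs as follows: $S(f)$ is a graph in the $x_{k-1}$-direction, the nested loci $\Sigma_j=\bigcup_{i\ge j}A_i(f)$ are cut out with codimension $j$ by the triangular system $\partial^i h/\partial x_l^i=0$ ($1\le i\le j$), and density plus local closedness globalize the picture. The one step you take as standard is that a nearby singular point has type equal to the order of vanishing of $\partial h/\partial x_l$ along the $x_l$-line; you can justify it with the coordinate-free recursion $\Sigma_{j+1}=\{q\in\Sigma_j\mid \ker df_q\cap T_q\Sigma_j\neq 0\}$, and your chart computation confirms this recursion, since at $q\in\Sigma_j$ the intersection $\ker df_q\cap T_q\Sigma_j$ is nonzero (spanned by $\partial/\partial x_l$) exactly when $\partial^{j+1}h/\partial x_l^{j+1}=0$ at $q$.
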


\subsection{$\partial$-Morin maps on manifolds with boundary}
In this subsection, we define $\partial$-Morin maps on manifolds with boundary and give some of their basic properties.
These maps are relative versions of the Morin maps introduced in the previous subsection.
Let $N$ be a compact $n$-dimensional manifold with boundary, let $L$ be an $l$-dimensional manifold, and let $F\colon N\to L$ be a map.
We assume that $n>l$.

\begin{defi}
If every singular point of $F$ and $F|_{\partial N}$ is a Morin singular point, and $S(F)\cap\partial N=\emptyset$, then $F$ is called a \emph{$\partial$-Morin map}.
In particular, if every singular point of $F$ and $F|_{\partial N}$ is a Morin singular point of type $A_1$, then $F$ is called a \emph{$\partial$-fold map}.
\end{defi}

Such maps appear in many previous studies on maps on manifolds with boundary.
We recall some of them here.

\begin{example}
A real-valued function $F$ on a manifold with boundary $N$ is called an $m$-function if all singular points of $F$ and $F|_{\partial N}$ are non-degenerate and $S(F)\cap\partial N=\emptyset$~\cite{Haj, JR}.
Such a map is a $\partial$-Morin map; in particular, it is a $\partial$-fold map~\cite{JR}.
Under the additional assumption that $F|_{S(F)\sqcup S(F|_{\partial N})}$ is injective, this definition agrees with that of a stable map to $\mathbb{R}$~\cite{Wra}.
\end{example}

\begin{example}
Singular points of stable maps from $3$-manifolds with boundary to $\mathbb{R}^2$ are classified in \cite{ NM, Shi}.
If the singular point set is disjoint from the boundary, or equivalently, if the stable map has no singular point of type $B_2$, then the map is a $\partial$-Morin map. 
For the definition of singular points of type $B_2$, see~\cite{Yam}.
Note that when the source manifold is orientable, every stable map is homotopic to a stable map with no singular points of type $B_2$~\cite{Yam}, that is, it is homotopic to a $\partial$-Morin map.
\end{example}

For $\partial$-Morin maps, the following proposition holds.
Since the proof is the same as that given by \cite{Fuk}, we omit it.

\begin{prop}
Let $F\colon N\to L$ be a $\partial$-Morin map.
Then, for every integer $k$ with $1\leq k\leq l$, the following hold: 
\begin{enumerate}[
  label=\normalfont(\arabic*),
  leftmargin=*,
  labelsep=.5em,
  itemsep=.3em,
  topsep=.45em,
  parsep=0pt, 
  leftmargin=2em
]
\item $A_k(F)$ and $\overline{A_k(F)}$ are $(l-k)$-dimensional submanifolds of $N$.
\item $A_k(F|_{\partial N})$ and $\overline{A_k(F|_{\partial N})}$ are $(l-k)$-dimensional submanifolds of $\partial N$.
\item $\overline{A_k(F)}=\bigcup_{i\geq k}A_i(F)$.
\item $\overline{A_k(F|_{\partial N})}=\bigcup_{i\geq k}A_i(F|_{\partial N})$.
\item The maps $F|_{A_k(F)}\colon A_k(F)\to L$ and $F|_{A_k(F|_{\partial N})}\colon A_k(F|_{\partial N})\to L$ are immersions.
\end{enumerate}
\end{prop}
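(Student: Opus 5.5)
The statement is local in nature, and the plan is to reduce each item to the closed-manifold case, \cite{Fuk}, via normal forms. The key input is the hypothesis $S(F)\cap\partial N=\emptyset$.

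\emph{Interior items.} Since $N$ is compact and $S(F)$ is closed, $S(F)$ has positive distance from $\partial N$. We can therefore pick a collar $c\colon\partial N\times[0,\varepsilon)\to N$ whose image is disjoint from $S(F)$. On $\Int N$, the map $F$ is a map from a manifold without boundary with only Morin singular points. Fukuda's argument uses only the local normal form of Definition~\ref{Def2.5} together with closedness of $S(F)$, so it applies verbatim. Concretely, in the normal form around a point of $A_k(F)$ we compute
\[
A_i(F)=\Bigl\{x_l=0=x_{l+1}=\dots=x_m,\ \partial_{x_l}^{j}\Bigl(x_l^{k+1}+\textstyle\sum_r x_rx_l^{k-r}\Bigr)=0 \text{ for } j\le i,\ \text{and } \neq 0 \text{ for } j=i+1\Bigr\}.
\]
This set is a smooth $(l-i)$-dimensional submanifold, and its closure is $\bigcup_{j\ge i}A_j(F)$ locally. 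That local computation gives (1) and (3) near points of $\overline{A_k(F)}$. Since $\overline{A_k(F)}\subset S(F)\subset \Int N$ is compact, these local statements globalize: $\overline{A_k(F)}$ is a closed submanifold of $\Int N$, hence of $N$.

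\emph{Boundary items.} $F|_{\partial N}$ is a Morin map from the closed $(n-1)$-manifold $\partial N$ to $L$ (using $n-1\ge l$, since $n>l$). Items (2) and (4) are therefore exactly the closed-case proposition of \cite{Fuk, Sae3} stated above, applied to $F|_{\partial N}$.

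\emph{Immersion statement (5).} In the normal form, $F|_{A_k}$ in the coordinates $(x_1,\dots,x_{l-1})$ has the first $l-1$ components equal to the coordinate functions. The coordinates $x_k,\dots,x_{l-1}$, together with a suitable parametrization of $x_1,\dots,x_{k-1}$ cut out by the equations, serve as coordinates on $A_k$. This gives the immersion directly, and the same argument applies to $F|_{\partial N}$.

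The main (mild) obstacle is the justification that the local algebra of the normal form yields submanifold charts for $\overline{A_k}$, rather than just for $A_k$. This is the standard computation showing that the discriminant-type equations $\partial^j_{x_l}(\cdot)=0$, for $j\le k$, are transversal. I would cite Fukuda for this step rather than redo it, noting only that the boundary never enters because of the collar separation.
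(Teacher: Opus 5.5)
Your proposal follows the same route as the paper, which just says the proof is Fukuda's. Because $S(F)$ is closed and disjoint from $\partial N$, items (1), (3) and the first half of (5) are Fukuda's local argument carried out inside $\Int N$. Items (2), (4) and the second half of (5) are the closed-case proposition applied to the Morin map $F|_{\partial N}$ on the closed $(n-1)$-manifold $\partial N$, where $n-1\geq l$.

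Your displayed local computation does need correcting: $x_l=0$ is not one of the defining equations, and $j$ should run over $1\leq j\leq i$. Set $P=x_l^{k+1}+\sum_{r=1}^{k-1}x_rx_l^{k-r}$. Near the given $A_k$-point, the correct description is $A_i(F)=\{x_{l+1}=\dots=x_n=0,\ \partial_{x_l}^{j}P=0\ (1\leq j\leq i),\ \partial_{x_l}^{i+1}P\neq 0\}$. With your extra equation the dimensions come out wrong. For example, near a cusp point, where $P=x_2^3+x_1x_2$, your set for $i=1$ is empty, whereas $A_1(F)$ is the fold curve $\{x_1=-3x_2^2,\ x_2\neq 0\}$.

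The equations $x_l=0$ and $x_1=\dots=x_{k-1}=0$ hold only on $A_k(F)$ itself. So $A_k(F)$ is locally the $(x_k,\dots,x_{l-1})$-plane, and (5) is immediate from $y_i\circ F=x_i$ for $i\leq l-1$.

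For $\overline{A_k(F)}$ near a point of $A_i(F)$ with $i>k$, work in the normal form centered at that point, with $P_i=x_l^{i+1}+\sum_{r=1}^{i-1}x_rx_l^{i-r}$. There, $\overline{A_k(F)}=\{x_{l+1}=\dots=x_n=0,\ \partial_{x_l}^{j}P_i=0\ (1\leq j\leq k)\}$. The differentials of $\partial_{x_l}^{j}P_i$ at the origin are $j!\,dx_{i-j}$ for $1\leq j\leq k$. Together with $dx_{l+1},\dots,dx_n$ these are independent, which gives exactly the $(l-k)$-dimensional submanifold chart you deferred to Fukuda.
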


\section{Gluing two smooth maps on manifolds with boundary}
In this section, we introduce a method for gluing two maps defined on manifolds with boundary, generalizing the method introduced in Levine~\cite{Lev2}.
Then, we show that gluing two $\partial$-Morin maps yields a Morin map on the closed manifold obtained by gluing the two manifolds with boundary. 
Furthermore, we describe the properties of the singular points of the resulting map in terms of those of the original maps.

Throughout this section, let $N_1$ and $N_2$ be compact $n$-dimensional manifolds with boundary satisfying $\partial N_1=\partial N_2$, let $L$ be an $l$-dimensional manifold, where $n>l$. 
We denote the boundary $\partial N_1=\partial N_2$ by $M$. 
Let $F_1\colon N_1\to L$ and $F_2\colon N_2\to L$ be maps, that are not necessarily $\partial$-Morin maps, that agree on collar neighborhoods of the boundaries of $N_1$ and $N_2$, that is, for collar neighborhoods $\phi_1\colon M\times[0,1)\to N_1$ and $\phi_2\colon M\times[0,1)\to N_2$ of the boundaries of $N_1$ and $N_2$, we assume that $F_1\circ\phi_1=F_2\circ\phi_2$. 
Moreover, we assume that $F_1\circ\phi_1=F_2\circ\phi_2$ is a submersion.

\subsection{Gluing construction}
In this subsection, we give a construction of a map on a closed manifold by gluing two maps on manifolds with boundary. 
This construction is a generalization of the method used by Levine~\cite{Lev2}.

We first recall the construction of the closed manifold obtained by gluing two manifolds $N_1$ and $N_2$ along their boundaries by the identity map. 
For more details, see \cite[Theorem 9.29]{Lee}.
We denote the topological space obtained from $N_1$ and $N_2$ identifying $x\in\partial N_1$ with $x\in\partial N_2$ by $N_1\cup_{\id}N_2$. 
Let $\pi\colon N_1\sqcup N_2\to N_1\cup_{\id} N_2$ be the quotient map. 
This topological space can be endowed with a smooth structure as follows: 
Denote $V_i={\rm Im\;}\phi_i$. 
Then, we define $\Phi\colon M\times(-1,1)\to \pi(V_1\sqcup V_2)$ as
$$
\Phi(x,t)=
\begin{cases}
\pi(\phi_1(x,-t)), & t\leq 0,\\
\pi(\phi_2(x,t)), & t\geq 0,
\end{cases}
$$
and this is a homeomorphism. 
Moreover, the restrictions $\pi|_{\Int N_1}$ and $\pi|_{\Int N_2}$ are injective. 
Thus, by these maps, the topological space $N_1\cup_{\id} N_2$ becomes an $n$-dimensional topological manifold. 
Furthermore, the smooth structure on $N_1\cup_{\id} N_2$ is defined so that these maps are diffeomorphisms.

We now construct a smooth map $F_1\cup F_2$ by gluing $F_1$ and $F_2$ along the boundary. 
Before doing this, we choose an even smooth function $g\colon(-1,1)\to[0,1)$ satisfying the following conditions. 
For some sufficiently small $\epsilon$ with $0<\epsilon<1/2$, we have
$$
g(t)=
\begin{cases}
t^2, & 0\leq |t|<\epsilon,\\
|t|, & 1-\epsilon<|t|< 1,
\end{cases}
$$
and
$$
t g'(t)>0
$$
for every $t\neq 0$.
Using this function, we define $F_1\cup F_2$ as follows. 
Away from $\pi(M)$, we set 
$$
(F_1\cup F_2)(\pi(x))
=
\begin{cases}
F_1(x), &  x\in N_1\setminus \phi_1(M\times[0,1-\epsilon)), \\
F_2(x), &  x\in N_2\setminus \phi_2(M\times[0,1-\epsilon)).
\end{cases}
$$
We next suppose that $\pi(x)\in\Phi(M\times[-1+\epsilon/2,1-\epsilon/2])$. 
Then, there exists $y\in M$ and $t\in[-1+\epsilon/2,1-\epsilon/2]$ such that $\pi(x)=\Phi(y,t)$. 
In this case, we define
$$
(F_1\cup F_2)(\pi(x))
=
\begin{cases}
F_1\circ\phi_1(y,g(-t)), &  t\leq 0,\\ 
F_2\circ\phi_2(y,g(t)), &  t>0.
\end{cases}
$$
Note that $F_1\circ\phi_1=F_2\circ\phi_2$ near $\Phi(M\times\{0\})$. 
Hence, the above definition gives a smooth map $F_1\cup F_2\colon N_1\cup_{\id} N_2\to L$.
When $F_1=F_2(=F)$, we denote this map by $D(F)$, and call this map the \emph{double} of $F$.

The singular point set of the resulting map $F_1\cup F_2$ is described in terms of those of $F_1$ and $F_2$, and the restriction to the boundary. 
This follows immediately from the computation of the Jacobian matrix of $F_1\cup F_2$.

\begin{lem}\label{Lem3.1}
We have
$$
S(F_1\cup F_2)=\pi(S(F_1))\sqcup \pi(S(F_2))\sqcup\pi(S(F_1|_{M})). 
$$
\end{lem}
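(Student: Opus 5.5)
We will prove the equality by splitting $N_1\cup_{\id}N_2$ into three regions and reading off the rank of the differential of $F_1\cup F_2$ in each. Write $f=F_1\circ\phi_1=F_2\circ\phi_2\colon M\times[0,1)\to L$; by assumption $f$ is a submersion.

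\emph{Region away from the collars.} On $\pi(N_i\setminus\phi_i(M\times[0,1-\epsilon)))$ the map $F_1\cup F_2$ is $F_i\circ\pi^{-1}$, and $\pi$ is a diffeomorphism there. So its singular points are exactly the images of singular points of $F_i$. Since $f$ is a submersion, $S(F_i)$ is disjoint from $\phi_i(M\times[0,1))$. Hence $S(F_i)$ lies entirely in this region, and it contributes exactly $\pi(S(F_i))$.

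\emph{Collar region $\Phi(M\times(-1,1))$.} In the coordinates $(y,t)$ given by $\Phi$, we have $(F_1\cup F_2)(\Phi(y,t))=f(y,g(\pm t))$, where we take $g(-t)$ for $t\le0$ and $g(t)$ for $t>0$. Since $g$ is even, this is $f(y,g(t))$ on the whole collar, i.e. the composite of $(y,t)\mapsto(y,g(t))$ with $f$. Near $|t|$ close to $1$ we have $g(t)=|t|$, which agrees with the identification used in the previous region, so the two descriptions are consistent on overlaps. The chain rule gives
$$
d(F_1\cup F_2)_{(y,t)}=\bigl(\,\partial_y f(y,g(t))\ \big|\ g'(t)\,\partial_t f(y,g(t))\,\bigr).
$$
\begin{itemize}
\item For $t\neq0$ we have $g'(t)\neq0$ because $tg'(t)>0$. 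So the rank equals the rank of $df_{(y,g(t))}$, which is $l$ since $f$ is a submersion. There are no singular points here.
\item For $t=0$ we have $g'(0)=0$, since $g(t)=t^2$ near $0$. The differential reduces to $(\partial_y f(y,0)\mid 0)$, whose rank is $\rank d(f|_{M\times\{0\}})_y=\rank d(F_1|_M)_y$. As $\dim M=n-1\ge l$, the point $\Phi(y,0)=\pi(y)$ is singular exactly when $y\in S(F_1|_M)$.
\end{itemize}
So the collar contributes exactly $\pi(S(F_1|_M))$. Note that $F_1|_M=F_2|_M$.

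\emph{Disjointness.} The three sets lie respectively in $\pi(\Int N_1\setminus\mathrm{collar})$, $\pi(\Int N_2\setminus\mathrm{collar})$ and $\pi(M)$, which are pairwise disjoint. This gives the disjoint union.

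The only point needing care is the consistency of the two definitions on the overlap where $1-\epsilon<|t|<1-\epsilon/2$. There $g(t)=|t|$, so the collar formula agrees with $F_i$ composed with $\phi_i$, and the rank computation transfers. Everything else is a direct Jacobian computation.
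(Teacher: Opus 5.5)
Your proposal is correct and takes the same approach as the paper, which only remarks that the lemma follows from computing the Jacobian matrix of $F_1\cup F_2$. You carry out that computation: on the collar the map is $f(y,g(t))$, the rank drops only at $t=0$ where $g'(0)=0$ (exactly at $S(F_1|_M)$, using $n-1\geq l$), and the submersion hypothesis excludes singular points elsewhere in the collar.
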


\begin{rem}
In this paper, when $N_1$ and $N_2$ are oriented, we consider $N_1\cup_{\id} \overline{N_2}$.
Here, $\overline{N_2}$ denotes $N_2$ with the opposite orientation. 
Then, the resulting manifold carries the orientation compatible with those of $N_1$ and $N_2$. 
\end{rem}

\subsection{Gluing two $\partial$-Morin maps}
In this subsection, we study the singular points of the map on the closed manifold obtained by gluing two $\partial$-Morin maps, as constructed in Subsection~3.1.
We first prove the following lemma. 
This lemma generalizes the results in \cite{Shi}.

\begin{lem}\label{Prop3.3}
Let $N$ be a compact $n$-dimensional manifold with boundary, let $L$ be an $l$-dimensional manifold, and let $F\colon N\to L$ be a $\partial$-Morin map, where $n>l$. 
For any point $p\in A_k(F|_{\partial N})$, there exist local coordinates $(x_1,\dots,x_{n-1}, x_{n})$ of $N$ around $p$ and $(y_1,\dots,y_{l})$ of $L$ around $F(p)$ such that 
$$
y_i\circ F(x_1,\dots,x_{n})=x_i, \quad 1\leq i\leq l-1, 
$$
and
$$
y_l\circ F(x_1,\dots,x_{n})=x_{l}^{k+1}+\sum_{r=1}^{k-1}x_r x_l^{k-r}-\sum_{r=l+1}^{l+\lambda}x_r^2+\sum_{r=l+\lambda+1}^{n-1}x_r^2+\sigma x_n, 
$$
where $x_n>0$ and $x_n=0$ correspond to $\Int N$ and $\partial N$, respectively.
Here, $\sigma\in\{1, -1\}$ and $0\leq\lambda\leq n-l-1$. 
\end{lem}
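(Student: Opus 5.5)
Since $S(F)\cap\partial N=\emptyset$ and $S(F)$ is closed, $F$ is a submersion on a neighborhood of $\partial N$. We may therefore work in a half-space chart $(u,s)$ around $p$ with $u\in\mathbb{R}^{n-1}$ a coordinate on $\partial N$ and $s\geq 0$, $s>0$ in $\Int N$.

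The plan is to start from the Morin normal form of $F|_{\partial N}$ and then absorb the normal direction into one target coordinate. Since $p\in A_k(F|_{\partial N})$, \Cref{Def2.5} gives coordinates $(x_1,\dots,x_{n-1})$ on $\partial N$ near $p$ and $(y_1,\dots,y_l)$ on $L$ near $F(p)$ with
\[
y_i\circ F|_{\partial N}=x_i \quad (1\le i\le l-1),\qquad y_l\circ F|_{\partial N}=h(x),
\]
where $h$ is the Morin polynomial in $x_1,\dots,x_{n-1}$. Extend $x_1,\dots,x_{n-1}$ to a neighborhood of $p$ in $N$ via a collar, so that they are constant along the collar lines. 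Together with the collar parameter $s$, this gives coordinates $(x_1,\dots,x_{n-1},s)$ on $N$ with $\partial N=\{s=0\}$.

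\textbf{Step 1: the first $l-1$ coordinates.} Write $y_i\circ F=x_i+s\,a_i(x,s)$ for $1\le i\le l-1$. Replace $x_i$ by $x_i'=y_i\circ F$. Since $\partial x_i'/\partial x_j=\delta_{ij}$ at $p$, the functions $(x_1',\dots,x_{l-1}',x_l,\dots,x_{n-1},s)$ still form a coordinate system. They are adapted to the boundary, because $x_i'=x_i$ on $\{s=0\}$.

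\textbf{Step 2: extracting the linear term in $s$.} In these new coordinates, $y_l\circ F=h(x')+s\,b(x',s)$ with $h$ unchanged. The differential $d(F|_{\partial N})_p$ has rank $l-1$, with image spanned by $dy_1,\dots,dy_{l-1}$, while $dF_p$ has rank $l$ because $p$ is a regular point of $F$. Hence $\partial(y_l\circ F)/\partial s(p)=b(p)\neq0$. Set $\sigma=\operatorname{sign} b(p)$ and define the new normal coordinate
\[
x_n=\sigma\bigl(y_l\circ F-h(x')\bigr)=\sigma s\,b(x',s).
\]
We have $x_n\ge0$ exactly when $s\ge0$, with $x_n=0$ exactly on $\partial N$, and $\partial x_n/\partial s(p)=|b(p)|>0$. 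So $(x_1',\dots,x_{n-1},x_n)$ is again a boundary-adapted chart. In this chart
\[
y_l\circ F=h(x_1',\dots,x_{n-1})+\sigma x_n,
\]
and $y_i\circ F=x_i'$ for $i\le l-1$, which is the claimed normal form.

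\textbf{Main obstacle.} The delicate point is that the first coordinate change must keep the boundary as $\{x_n=0\}$ and keep $\Int N$ as the positive side. This is why the correction terms are taken proportional to $s$, and why $\sigma$ is introduced rather than flipping the sign of $x_n$. We also need a collar whose lines are transverse to $\partial N$, so that $(x,s)$ really is a chart; this is standard. The constraint $\lambda\le n-l-1$ holds automatically, since the Morin form is taken in dimension $n-1$.
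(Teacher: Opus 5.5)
Your proposal is correct and is essentially the paper's argument. The paper performs your Steps 1 and 2 at once through a single map $\theta(x,s)=(X,S)$ with $X_i=y_i\circ F\circ\phi$ for $i\le l-1$, $X_i=x_i$ for $l\le i\le n-1$, and $S=\delta\bigl(y_l\circ F\circ\phi-H(X)\bigr)$. It then checks that the Jacobian is triangular and invertible, and that $\partial S/\partial s>0$, just as you do with your $x_n=\sigma s\,b(x',s)$.
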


\begin{proof}
Let $p\in A_k(F|_{\partial N})$. 
By the local normal form for a Morin singular point, there exist a collar neighborhood $\phi\colon\partial N\times[0,1)\longrightarrow N$, and local coordinates $(x_1,\dots,x_{n-1})$ of $\partial N$ around $p$ and $(y_1,\dots,y_l)$ of $L$ around $F(p)$ such that
$$
y_i\circ F\circ\phi(x_1,\dots,x_{n-1},0)=x_i,
\qquad
1\leq i\leq l-1,
$$
and
$$
y_l\circ F\circ\phi(x_1,\dots,x_{n-1},0)
=
x_l^{k+1}
+\sum_{r=1}^{k-1}x_r x_l^{k-r}
-\sum_{r=l+1}^{l+\lambda}x_r^2
+\sum_{r=l+\lambda+1}^{n-1}x_r^2.
$$
Here, $s=0$ and $s>0$ correspond to $\partial N$ and $\Int N$, respectively.
Denote the expression on the right-hand side of the second equation by $H(x_1,\dots,x_{n-1})$.

Since $p\in \partial N$, the point $p$ is a regular point of $F$. 
Together with the above normal form for $F|_{\partial N}$, this implies that
$$
\gamma=
\frac{\partial(y_l\circ F\circ\phi)}{\partial s}(0)
\neq 0.
$$
Set $\delta\in\{-1,1\}$ the sign of $\gamma$.

Define a map $\theta$ on a neighborhood of the origin in $\mathbb{R}^n$ by
$$
\theta(x_1,\dots,x_{n-1},s)
=
(X_1,\dots,X_{n-1},S),
$$
where
$$
X_i=
\begin{cases}
y_i\circ F\circ\phi(x_1,\dots,x_{n-1},s),
& 1\leq i\leq l-1,\\
x_i,
& l\leq i\leq n-1,
\end{cases}
$$
and
$$
S
=
\delta\Big(
y_l\circ F\circ\phi(x_1,\dots,x_{n-1},s)
-
H(X_1,\dots,X_{n-1})
\Big).
$$
When $s=0$, we have $X_i=x_i$ for $1\leq i\leq n-1$, and $S(x_1,\dots,x_{n-1},0)=0$.
Moreover, since $H$ has no linear terms, $dH_0=0$. 
Consequently, the Jacobian matrix of $\theta$ at $0$ is 
$$
J\theta_0
=
\begin{pmatrix}
I_{n-1} & *\\
0 & |\gamma|
\end{pmatrix}.
$$
Thus, $J\theta_0$ is invertible. 
By the inverse function theorem, after restricting to a sufficiently small neighborhood of the origin, $\theta$ is a diffeomorphism onto its image.
Furthermore, since $\frac{\partial S}{\partial s}(0)=|\gamma|>0$, after restricting the neighborhood of the origin if necessary, we may assume that 
$$
\frac{\partial S}{\partial s}>0
$$ 
throughout the neighborhood. 
Since $S(x_1,\dots,x_{n-1},0)=0$, it follows that $S>0$ if and only if $s>0$, and $S=0$ if and only if $s=0$.
Hence, $\theta$ preserves the boundary and the interior.

In the new coordinates $(X_1,\dots,X_{n-1},S)$, the map $F$ is represented as
$$
y\circ F\circ\phi\circ\theta^{-1}
(X_1,\dots,X_{n-1},S)
=
\big(
X_1,\dots,X_{l-1},
H(X_1,\dots,X_{n-1})+\delta S
\big),
$$
which is the desired local normal form.
\end{proof}

\begin{rem}
Suppose that $k=1$ in the preceding lemma. 
Thus, let $p$ be a fold point. 
In suitable local coordinates around $p$ and $F(p)$, $F$ has the form
$$
(x_1,\ldots,x_n)
\longmapsto
\Big(
x_1,\ldots,x_{l-1},
-\sum_{r=l}^{l+\lambda-1}x_r^2
+\sum_{r=l+\lambda}^{n-1}x_r^2
+\sigma x_n
\Big),
$$
where $\sigma\in\{-1,1\}$. 
If the coefficients of all the quadratic terms $x_i^2$ and the linear term $x_n$ have the same sign, then $p$ is called a \emph{boundary definite fold point}; otherwise, it is called a \emph{boundary indefinite fold point}~\cite{Iwa3, SY2}.
\end{rem}

Using the preceding lemma, we can determine the local normal forms of $F_1\cup F_2$ around its singular points. 
The following proposition is the basic result underlying the applications in the subsequent sections.
The proof is based on that of \cite{Lev2}.

\begin{prop}\label{Lem3.5}
Let $F_1\colon N_1\to L$ and $F_2\colon N_2\to L$ be $\partial$-Morin maps. 
Then, the following holds for the singular points of $F_1\cup F_2$: 
\begin{enumerate}[
  label=\normalfont(\arabic*),
  leftmargin=*,
  labelsep=.5em,
  itemsep=.3em,
  topsep=.45em,
  parsep=0pt, 
  leftmargin=2em
]
\item If $p\in A_k(F_j)$, then $\pi(p)\in A_k(F_1\cup F_2)$.
More precisely, if $F_j$ has the local normal form around $p$ given in \Cref{Def2.5}, then $F_1\cup F_2$ has the same local normal form around $\pi(p)$. 
\item If $p\in A_k(F_j|_{\partial N_j})$, then $\pi(p)\in A_k(F_1\cup F_2)$. 
More precisely, if $F_j|_{\partial N_j}$ has the local normal form around $p$ in \Cref{Prop3.3}, then there exist local coordinates $(x_1,\dots, x_{n})$ of $N_1\cup N_2$ around $\pi(p)$ and $(y_1,\dots,y_{l})$ of $L$ around $(F_1\cup F_2)(\pi(p))$ such that
$$
y_i\circ (F_1\cup F_2)=x_i, \quad 1\leq i\leq l-1, 
$$
and
$$
y_l\circ (F_1\cup F_2)=x_{l}^{k+1}+\sum_{r=1}^{k-1}x_r x_l^{k-r}-\sum_{r=l+1}^{l+\lambda}x_r^2+\sum_{r=l+\lambda+1}^{n-1}x_r^2+\sigma x_n^2, 
$$
where $\lambda$ and $\sigma$ are as in \Cref{Prop3.3}. 
\end{enumerate}
In particular, $F_1\cup F_2\colon N_1\cup_{\id}N_2\to L$ is a Morin map. 
\end{prop}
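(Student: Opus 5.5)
Away from the gluing region the map $F_1\cup F_2$ coincides with $F_1$ or $F_2$ composed with the smooth chart $\pi|_{\Int N_j}$, and the only other modification is the reparametrization of the collar by $g$. The plan is therefore to treat the two kinds of singular points separately, using \Cref{Lem3.1} to know that these are the only singular points.

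For (1), let $p\in A_k(F_j)$. Since $S(F_j)\cap\partial N_j=\emptyset$ and $S(F_j)$ is compact, we can shrink the collars so that $S(F_j)$ avoids $\phi_j(M\times[0,1))$. The normal form coordinates of \Cref{Def2.5} can then be chosen inside $N_j\setminus\phi_j(M\times[0,1-\epsilon))$, where $F_1\cup F_2=F_j\circ\pi^{-1}$ and $\pi$ is a diffeomorphism onto its image. Transporting these coordinates by $\pi$ gives the same normal form for $F_1\cup F_2$ at $\pi(p)$.

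For (2), let $p\in A_k(F_j|_{\partial N_j})$; by symmetry take $j=2$, so that $\pi(p)=\Phi(p,0)$. First apply \Cref{Prop3.3} to $F_2$ at $p$. This gives coordinates $(x_1,\dots,x_{n-1},x_n)$ with $x_n\geq 0$ describing the interior, in which $y_l\circ F_2=H(x_1,\dots,x_{n-1})+\sigma x_n$.

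The key step is to arrange that these coordinates are adapted to the collar $\phi_2$, i.e.\ that $x_n$ equals the collar parameter $s$. The proof of \Cref{Prop3.3} does not give this directly: its new coordinate $S$ is a function of $(x',s)$ with $S(x',0)=0$ and $\partial S/\partial s>0$. I would handle this by using the freedom in choosing the collar. Since $F_1\circ\phi_1=F_2\circ\phi_2$ is only assumed on some collar, and the gluing construction is independent of the collar up to diffeomorphism, redefine the collar near $p$ so that its parameter is $S$. Alternatively, write $S=s\,u(x',s)$ with $u>0$ and absorb this factor.

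In collar coordinates $(x',t)$ near $\pi(p)$, on both sides of $t=0$ we then have $F_1\cup F_2=(x_1,\dots,x_{l-1},H(x')+\sigma g(t))$, where $g(t)=t^2$ for $|t|<\epsilon$. This uses that $F_1\circ\phi_1=F_2\circ\phi_2$, so both halves share the same expression, and that $g$ is even. Setting $x_n=t$ gives exactly the stated normal form with $\sigma x_n^2$.

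This normal form is a Morin $A_k$ normal form after reordering the quadratic variables: $x_n$ joins the positive or negative squares according to the sign of $\sigma$. Combined with \Cref{Lem3.1} and part (1), every singular point of $F_1\cup F_2$ is Morin, so $F_1\cup F_2$ is a Morin map.

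The main obstacle is the compatibility step in (2): making the boundary normal form coordinates of \Cref{Prop3.3} simultaneously collar coordinates that are shared by $F_1$ and $F_2$, while keeping the function $g$ fixed. If adjusting the collar is inconvenient, I would instead use a reparametrization $t\mapsto \tilde t$ with $\tilde t^2=S(x',g(t))$, where $\tilde t=t\sqrt{S(x',t^2)/t^2}$ is smooth, odd in $t$, and has positive derivative at $0$. This gives a local diffeomorphism realizing $x_n=\tilde t$.
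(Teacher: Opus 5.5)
Part (1) and the final Morin conclusion are fine. No shrinking of the collars is needed: $F_j\circ\phi_j$ is assumed to be a submersion, so $S(F_j)$ already avoids the collar. The gap is in the key step of (2).

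You treat the coordinate change in the proof of \Cref{Prop3.3} as if only the normal coordinate changed, i.e.\ as $(x',s)\mapsto(x',S(x',s))$. In that proof the first $l-1$ coordinates are also replaced: $X_i=y_i\circ F\circ\phi(x',s)$ for $1\le i\le l-1$, and these depend on $s$. So your fallback, which reparametrizes only $t$ via $\tilde t^{\,2}=S(x',t^2)$, actually yields
\[
(F_1\cup F_2)\circ\Phi(x',t)=\bigl(X_1(x',t^2),\dots,X_{l-1}(x',t^2),\,H(X'(x',t^2))+\sigma\tilde t^{\,2}\bigr).
\]
This is not the normal form in the chart $(x',\tilde t)$.

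Your primary route, redefining the collar so that its parameter is $S$, has the same blind spot. It also rests on the unproved claim that the glued map is independent of the collar up to equivalence. The proposition is about the specific map built from the given $\phi_1,\phi_2$ and $g$, so that claim is essentially what must be shown. Moreover, altering $\phi_1,\phi_2$ near $p$ consistently, keeping $F_1\circ\phi_1=F_2\circ\phi_2$ and a global collar, would itself need an argument.

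The repair is short. Write $S(x',s)=s\,u(x',s)$ with $u(0)>0$ (Hadamard's lemma, since $S(x',0)=0$ and $\partial_sS(0)>0$), and use the chart
\[
(x',t)\longmapsto\bigl(X'(x',t^2),\,t\sqrt{u(x',t^2)}\bigr).
\]
Its Jacobian at the origin is $\mathrm{diag}(I_{n-1},\sqrt{u(0)})$, because $X'(x',0)=x'$ and $\partial_t[X'(x',t^2)]=2t\,\partial_sX'=0$ at $t=0$. In this chart the map becomes $(X_1,\dots,X_{l-1},H(X')+\sigma T^2)$, as required.

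This tangential correction is exactly the paper's map $\theta$, namely $X_i=x_i+s^2B_i(x,s^2)$. The paper does not use \Cref{Prop3.3} at all. It takes only the boundary normal form, applies Hadamard's lemma to get $F_j\circ\phi(x,s)=(x_i+sB_i,\,H(x)+sB_l)$, substitutes $s\mapsto s^2$, corrects the tangential coordinates, and then rescales by $T=S\sqrt{\delta C}$.

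Once corrected, your route through \Cref{Prop3.3} is a slight streamlining. Since $H$ is already expressed in the corrected coordinates, you avoid the paper's computation of $C=B_l-\sum_{r=1}^{k-1}B_rX_l^{k-r}$, which arises because $H$ involves $x_1,\dots,x_{k-1}$.
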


\begin{proof}
The statement (1) follows immediately from the definition of the map $F_1\cup F_2$.

We next prove the statement (2). 
Fix $j\in\{1,2\}$, and let $p\in A_k(F_j|_{\partial N_j})$. 
Then, there exist a collar neighborhood $\phi\colon\partial N_j\times[0,1)\to N_j$, and local coordinates $(x_1,\dots,x_{n-1})$ of $\partial N$ around $p$ and $(y_1,\dots, y_l)$ of $L$ around $F_j(p)$ such that 
$$
y_i\circ F_j\circ \phi(x_1,\dots, x_{n-1},0)=x_i, 
\qquad 
1\leq i\leq l-1, 
$$
and 
$$
y_l\circ F_j\circ \phi(x_1,\dots, x_{n-1},0)=x_{l}^{k+1}+\sum_{r=1}^{k-1}x_r x_l^{k-r}-\sum_{r=l+1}^{l+\lambda}x_r^2+\sum_{r=l+\lambda+1}^{n-1}x_r^2.
$$
In what follows, we denote the right-hand side of the second equation by $H(x_1,\dots, x_{n-1})$.

By Hadamard’s lemma, there exist functions $B_i$ for $1\leq i\leq l$ defined on a sufficiently small neighborhood of the origin such that
$$
y_i\circ F_j\circ \phi(x,s)=
\begin{cases}
x_i+sB_i(x,s), 
& 1\leq i\leq l-1, \\
H(x)+sB_l(x,s), 
& i=l. 
\end{cases}
$$
For sufficiently small $|s|$, we have
$$
(F_1\cup F_2)\circ\Phi(x,s)=F_j\circ\phi(x,s^2), 
$$
where $\Phi$ is induced one by $\phi$ as Subsection~3.1. 
Consequently, with respect to the above local coordinates, we obtain
$$
y_i\circ (F_1\cup F_2)\circ\Phi(x,s)=
\begin{cases}
x_i+s^2 B_i(x,s^2), 
& 1\leq i\leq l-1, \\
H(x)+s^2 B_l(x,s^2), 
& i=l. 
\end{cases}
$$

Define a map $\theta$ on a neighborhood of the origin of $\mathbb{R}^n$ by
$$
\theta(x_1,\dots,x_{n-1},s)=(X_1,\dots,X_{n-1},S), 
$$
where
$$
X_i=
\begin{cases}
x_i+s^2 B_i(x,s^2), 
& 1\leq i\leq l-1, \\
x_i, 
& l\leq i\leq n-1, 
\end{cases}
$$
and $S=s$. 
Since the Jacobian matrix of $\theta$ at the origin is the identity matrix, the inverse function theorem implies that $\theta$ is a local diffeomorphism around the origin. 
Using $\theta$, we have
$$
y_i\circ(F_1\cup F_2)\circ\Phi\circ\theta^{-1}(X,S)=
\begin{cases}
X_i, 
& 1\leq i\leq l-1, \\
H(X)+S^2 C(X,S), 
& i=l, 
\end{cases}
$$
where 
$$
C(X,S)=B_{l}(x,S^2)-\sum_{r=1}^{k-1}B_r(x,S^2)X_l^{k-r}.
$$

Since $p$ is a regular point of $F_j$, the Jacobian matrix of $F_j$ at $p$ has the maximal rank. 
This implies that $B_l(0)\neq 0$, and we set the sign of $B_l(0)$ as $\delta\in\{-1,1\}$. 
Since $C(0)\neq 0$, continuity implies that $\delta C>0$ around the origin. 
Then, we define $T=S\sqrt{\delta C(X,S)}$ and $\eta(X,S)=(X,T)$. 
The map $\eta$ is a local diffeomorphism around the origin. 
Indeed, since $\frac{\partial T}{\partial S}(0)=\sqrt{|B_l(0)|}\neq 0$, the Jacobian matrix of $\eta$ at the origin is invertible. 
Hence, $\eta$ is a local diffeomorphism in a neighborhood of the origin. 
Using $\eta$, we obtain
$$
y_i\circ(F_1\cup F_2)\circ\Phi\circ\theta^{-1}\circ\eta^{-1}(X,T)
=
\begin{cases}
X_i, 
& 1\leq i\leq l-1, \\
H(X)+\delta T^2, 
& i=l, 
\end{cases}
$$
which is the desired local normal form.

Finally, since $\delta C>0$ in a neighborhood of the origin, $s=0$ if and only if $T=0$, and $s>0$ if and only if $T>0$. 
Thus, the coordinate change preserves the boundary and the interior.
\end{proof}

\begin{rem}
The Morin map $F_1\cup F_2$ is not a stable map in general. 
If $(n,l)$ lies in the nice dimensions, then the stable maps are dense~\cite{Mat6}. 
Hence, $F_1\cup F_2$ can be approximated by a stable map from $N_1\cup_{\id}N_2$ to $L$. 
Since Morin singular points are stable as map germs, we may take such an approximation sufficiently close to $F_1\cup F_2$ so that its singular point set is diffeomorphic to $S(F_1\cup F_2)$. 
By abuse of notation, we denote this stable map again by $F_1\cup F_2$. 
\end{rem}

\section{Relative formulas for $\partial$-Morin maps}
In this section, we establish relative versions of Fukuda's theorem and Sakuma's theorem by using the gluing construction developed in the preceding section.

\subsection{Relative version of Fukuda’s theorem}
In this subsection, we establish a formula for the Euler characteristic in terms of the singular points of a $\partial$-Morin map and its restriction to the boundary. 
We first recall the results of Ruiz~\cite{Rui}, which include Fukuda’s theorem~\cite{Fuk} as a special case.

\begin{thm}[Ruiz]\label{Thm4.1}
Let $M$ be an $m$-dimensional closed manifold and let $f\colon M\to\mathbb{R}^l$ be a Morin map, where $m\geq l$. 
If $m-l$ is odd, then 
$$
\chi(M)=
\sum_{k:\vspace{1pt}\text{odd}}
\bigl(
\chi(\overline{A_k^+(f)})-\chi(\overline{A_k^-(f)})
\bigr).
$$
Here, 
\begin{align*}
A_k^+(f)=\{p\in A_k(f)\mid \nu(p)\equiv 0 \pmod 2\},\\
A_k^-(f)=\{p\in A_k(f)\mid \nu(p)\equiv 1 \pmod 2\}, 
\end{align*}
where $\nu(p)$ is the number of negative signs among the coefficients of $x_l^{k+1}$ and the quadratic terms in the local normal form in \Cref{Def2.5}. 
\end{thm}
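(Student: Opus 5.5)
We plan to prove the formula by Morse theory applied to a generic height function. Then we transfer the count of critical points from $M$ to the singular strata, following Fukuda's original argument. Choose a unit vector $v\in\mathbb{R}^l$ generic with respect to all the immersions $f|_{A_k(f)}$, and put $h=\langle v,f\rangle\colon M\to\mathbb{R}$. At a regular point of $f$ the differential $df$ is surjective, so $dh\neq0$. At $p\in A_k(f)$ the image of $df_p$ is a hyperplane, namely the span of $\partial/\partial y_1,\dots,\partial/\partial y_{l-1}$ in the normal form of \Cref{Def2.5}. Hence $p$ is critical for $h$ exactly when $v$ is normal to this hyperplane. For generic $v$ this forces the following:
\begin{itemize}
\item $p\in A_1(f)$;
\item $p$ is a Morse critical point of $h|_{\overline{A_1(f)}}$;
\item $p$ is a nondegenerate critical point of $h$.
\end{itemize}
The Morse inequality computation then gives $\chi(M)=\sum_{p\in\mathrm{Crit}(h)}(-1)^{\mathrm{ind}_p h}$.

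Next we compute $\mathrm{ind}_p h$ at a fold point $p$ from the normal form. We will have $h=\pm y_l\circ f+(\text{terms in }x_1,\dots,x_{l-1})$, and the restriction of $h$ to $A_1(f)=\{x_l=\cdots=x_m=0\}$ is a function of $x_1,\dots,x_{l-1}$. Writing $\nu(p)$ for the number of negative coefficients among the $m-l+1$ quadratic terms, this should yield
\[
\mathrm{ind}_p h=\mathrm{ind}_p\bigl(h|_{A_1(f)}\bigr)+\nu(p) \quad\text{or}\quad \mathrm{ind}_p\bigl(h|_{A_1(f)}\bigr)+(m-l+1-\nu(p)),
\]
according to the sign of $v$ relative to the fold's normal direction. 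Since $m-l$ is odd, $m-l+1$ is even, so $(-1)^{\mathrm{ind}_p h}=(-1)^{\nu(p)}(-1)^{\mathrm{ind}(h|_{A_1})}$ independently of that sign. This parity cancellation is exactly where the hypothesis that $m-l$ is odd enters. We then average over $v$ and $-v$. Alternatively, we use that for each critical point of $h|_{\overline{A_1(f)}}$ exactly one of $\pm v$ makes it a critical point of $h$ on $M$, namely the one pointing to the "folded" side. This should express $\chi(M)$ as a signed count of critical points of $\pm h$ restricted to the open strata $A_1^{\pm}(f)$.

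The remaining step converts this signed count into $\chi(\overline{A_1^+(f)})-\chi(\overline{A_1^-(f)})$ plus correction terms. The closures $\overline{A_1^\pm(f)}$ are compact manifolds whose frontier lies in $\overline{A_2(f)}$. Across an $A_2$ stratum the parity $\nu$ flips, so $A_1^+$ and $A_1^-$ meet there. The correction terms come from Morse theory on a manifold with this boundary-like structure, namely from critical points of $h|_{\overline{A_2(f)}}$. We then iterate: the $A_2$ contributions either cancel or are absorbed, and they feed into a signed count on $\overline{A_3^\pm(f)}$ by the same normal-form index computation one level down. Here the normal form of $x_l^{k+1}$ determines whether $\nu$ jumps across $A_{k+1}$ (for $k$ even) or not (for $k$ odd). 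This explains why only odd $k$ survive.

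The main obstacle is this inductive bookkeeping at cusp and higher $A_k$ points. One must check, using the unfolding $x_l^{k+1}+\sum x_r x_l^{k-r}$, how the signs $\pm$ of the adjacent $A_{k-1}$ sheets are arranged around $A_k$. One must also check that the boundary Morse contributions reproduce exactly $\chi(\overline{A_k^+})-\chi(\overline{A_k^-})$ for odd $k$ and vanish for even $k$. I would first verify this for $l=1$ (the classical Morse formula) and for $l=2$, where the strata are $A_1,A_2$ only. I would then run the general case as an induction on $k$.
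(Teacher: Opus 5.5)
The paper does not prove this statement; it quotes it from Ruiz (extending Fukuda). So your argument has to stand on its own, and as written it has a genuine gap.

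\textbf{What works.} Your first step is correct. For generic $v$, every critical point of $h=\langle v,f\rangle$ lies in $A_1(f)$. The critical points of $h$ are exactly those of $h|_{A_1(f)}$, because $df_p(T_pM)=df_p(T_pA_1(f))$ at a fold point. The Hessian splits as $\mathrm{Hess}(h|_{A_1})\oplus c\,\mathrm{Hess}(Q)$, and since $m-l+1$ is even you get $\chi(M)=\Sigma_1$, where
\[
\Sigma_j:=\sum_{q\in\mathrm{Crit}(h|_{A_j(f)})}(-1)^{\nu(q)+\mathrm{ind}_q(h|_{A_j(f)})}.
\]

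\textbf{Errors in the outline.}
\begin{itemize}
\item Your ``alternative'' is wrong. Since $h_{-v}=-h_v$, a critical point of $h|_{A_1(f)}$ is critical for both $h$ and $-h$. A critical point of $h|_{\overline{A_1(f)}}$ lying in $A_2(f)$ is critical for neither.
\item Your sentence on when $\nu$ jumps is backwards. On $A_k(f)$ with $k$ odd, the parity of $\nu$ flips across $A_{k+1}(f)$, as you correctly say for $k=1$. For $k$ even, $\nu$ is not even well defined.
\item The Morse theory needed on $\overline{A_1^\pm(f)}$ is not the standard one for manifolds with boundary. At $q\in A_2(f)$ the kernel line of $df_q$ lies in $T_qS(f)$ and is transverse to $A_2(f)$, so $d(h|_{S(f)})_q$ vanishes on it. Hence every critical point of $h|_{A_2(f)}$ is a critical point of $h|_{S(f)}$ sitting on the boundary $\partial\overline{A_1^\pm(f)}=\overline{A_2(f)}$. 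Such points do occur, for example the extrema of $h$ on any component of $\overline{A_2(f)}$ missing $A_3(f)$.
\end{itemize}

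\textbf{The gap.} Everything after $\chi(M)=\Sigma_1$ is listed as something to be checked rather than carried out. The missing bookkeeping, which is the actual content of the theorem, consists of three facts.

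First, for generic $v$ one has $\mathrm{Crit}(h|_{\overline{A_k}})=\mathrm{Crit}(h|_{A_k})\sqcup\mathrm{Crit}(h|_{A_{k+1}})$, all nondegenerate.

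Second, for odd $k$ the sets $\overline{A_k^{\pm}(f)}$ are compact manifolds with common boundary $\overline{A_{k+1}(f)}$. A point of $\mathrm{Crit}(h|_{A_{k+1}})$ is a boundary critical point near which $h|_{\overline{A_k}}\approx at^2+h|_{A_{k+1}}$, with the \emph{same} $a\neq0$ seen from both sides. It contributes $(-1)^{\mathrm{ind}_q(h|_{A_{k+1}})}$ to each side if $a>0$ and nothing if $a<0$, so it cancels in $\chi(\overline{A_k^+})-\chi(\overline{A_k^-})$. This is why even strata disappear.

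Third, a point $q\in\mathrm{Crit}(h|_{A_{k+2}})$ is an ordinary boundary critical point. There $d(h|_{\overline{A_k}})_q$ is a nonzero multiple of $dx_1$, with $x_1$ the unfolding coordinate of the normal form. The sign of that one coefficient decides two things: whether the gradient points into the side whose $\nu$-parity equals $\nu(q)$ or the opposite one, and whether the transverse direction of $\overline{A_{k+1}}$ at $q$ adds $1$ to the index. Consequently its net contribution is always $-(-1)^{\nu(q)+\mathrm{ind}_q(h|_{A_{k+2}})}$.

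Together these give
\[
\chi(\overline{A_k^+(f)})-\chi(\overline{A_k^-(f)})=\Sigma_k-\Sigma_{k+2}\qquad(k\ \text{odd}),
\]
which telescopes with $\chi(M)=\Sigma_1$ to the stated formula. Your outline is the right method (Fukuda's), but without these three facts it is a plan rather than a proof.
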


\begin{rem}\label{Rem4.2}
When $m-l$ is odd, the sets $A_k^\pm(f)$ used in the above theorem are well-defined, independently of the choice of local coordinates.
Indeed, in the local normal form in \Cref{Def2.5}, we have $\nu(p)=\lambda$. 
On the other hand, if we consider the local normal form obtained by multiplying the last component by $-1$, then the integer is $\nu(p)=m-l-\lambda+1$. 
Since $m-l$ is odd, we have $\lambda\equiv m-l-\lambda+1 \pmod 2$.
Thus, the parity of $\nu(p)$ is independent of the choice of the local coordinates. 
\end{rem}

Then, we obtain our theorem, which is the relative version of Fukuda's theorem.

\begin{thm}\label{Thm4.3}
Let $F\colon N\to\mathbb{R}^l$ be a $\partial$-Morin map of a compact $n$-dimensional manifold $N$ with boundary, where $n>l$. 
If $n$ is even and $l$ is odd, then we have
$$
\chi(N)
=
\sum_{k:\vspace{1pt}\text{odd}}\Big(\chi(\overline{A_k^+(F)})-\chi(\overline{A_k^-(F)})\ \Big)
+
\frac{1}{2}\sum_{k:\vspace{1pt}\text{odd}}\Big(\chi(\overline{{}^{\partial\!}A_k^+(F)})-\chi(\overline{{}^{\partial\!}A_k^-(F)})\Big). 
$$
where $A^{\pm}_k(F)$ are defined as above.
Moreover, 
\begin{align*}
{}^{\partial\!} A_k^+(F)=\{p\in A_k(F|_{\partial N})\mid {}^{\partial\!}\nu(p)\equiv 0 \pmod 2\},\\
{}^{\partial\!} A_k^-(F)=\{p\in A_k(F|_{\partial N})\mid {}^{\partial\!}\nu(p)\equiv 1 \pmod 2\}, 
\end{align*}
where ${}^{\partial\!}\nu(p)$ is the number of negative signs among the coefficients of $x_l^{k+1}$, the quadratic terms in the local normal form, and the linear term in \Cref{Prop3.3}. 
\end{thm}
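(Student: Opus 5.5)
The plan is to apply Ruiz's theorem (\Cref{Thm4.1}) to the double $D(F)\colon D(N)=N\cup_{\id}N\to\mathbb{R}^l$ and then undo the doubling on both sides.

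By \Cref{Lem3.5}, $D(F)$ is a Morin map on the closed $n$-manifold $D(N)$. Since $n$ is even and $l$ is odd, $n-l$ is odd, so \Cref{Thm4.1} applies. It gives
$$
\chi(D(N))=\sum_{k:\text{odd}}\bigl(\chi(\overline{A_k^+(D(F))})-\chi(\overline{A_k^-(D(F))})\bigr).
$$

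For the left-hand side, $\chi(D(N))=2\chi(N)-\chi(\partial N)$. Moreover, $\partial N$ is a closed manifold of odd dimension $n-1$, so $\chi(\partial N)=0$. Hence $\chi(D(N))=2\chi(N)$.

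For the right-hand side, \Cref{Lem3.1} and \Cref{Lem3.5} decompose $S(D(F))$ as
$$
\pi(S(F))\sqcup\pi'(S(F))\sqcup\pi(S(F|_{\partial N})),
$$
where $\pi'$ denotes the second copy. This decomposition respects types: $A_k$ points of $F$ in either copy stay $A_k$ with the same normal form, and $A_k$ points of $F|_{\partial N}$ become $A_k$ points of $D(F)$.

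Next I need to match signs.
\begin{itemize}
\item For interior points, the normal form is unchanged, so $\nu$ is unchanged.
\item For boundary points, \Cref{Lem3.5}(2) shows the linear term $\sigma x_n$ of \Cref{Prop3.3} becomes the quadratic term $\sigma x_n^2$. So $\nu(\pi(p))$ for $D(F)$ equals ${}^{\partial\!}\nu(p)$, and $\pi(A_k^\pm(F|_{\partial N}))$ coincides with ${}^{\partial\!}A_k^\pm(F)$.
\end{itemize}
By \Cref{Rem4.2} the parity is coordinate-independent, so this identification is legitimate.

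Closures need a short argument. Since $S(F)\cap\partial N=\emptyset$, the three pieces of $S(D(F))$ are mutually separated, being disjoint compact sets. Also $\overline{A_k}=\bigcup_{i\ge k}A_i$ within each piece. I also have to confirm that $A_k^\pm$ closures decompose compatibly. The sign classes are open and closed within $A_k$, and $\overline{A_k^\pm}$ is a union of components of $\overline{A_k}$ adjoining them. So the closures are taken piece by piece, and Euler characteristics add over disjoint unions. This gives
$$
\chi(\overline{A_k^\pm(D(F))})=2\chi(\overline{A_k^\pm(F)})+\chi(\overline{{}^{\partial\!}A_k^\pm(F)}).
$$

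Substituting both sides and dividing by $2$ yields the formula.

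The main obstacle I expect is this bookkeeping of closures and signs. In particular, I need the closure of ${}^{\partial\!}A_k^\pm(F)$, computed in $\partial N$, to agree with the closure of the corresponding set of $D(F)$ in $D(N)$. This holds because $\pi|_{\partial N}$ is an embedding onto a closed subset. I also need the sign convention for $D(F)$ near $\pi(\partial N)$ to match ${}^{\partial\!}\nu$, which is exactly the content of \Cref{Lem3.5}(2). Everything else is a direct computation.
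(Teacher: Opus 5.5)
Your proposal is correct and is essentially the paper's proof. You apply \Cref{Thm4.1} to the double $D(F)$, use \Cref{Prop3.3} and \Cref{Lem3.5} to turn the linear term $\sigma x_n$ at boundary $A_k$ points into $\sigma x_n^2$, note that $\chi(D(N))=2\chi(N)$ because $\chi(\partial N)=0$, and divide by $2$. One small correction to your closure bookkeeping: $\overline{A_k^\pm}$ need not be a union of components of $\overline{A_k}$, since for odd $k$ the parity of $\nu$ flips across adjacent $A_{k+1}$ points (e.g.\ at cusps); you do not need that claim, because the three pieces of $S(D(F))$ are pairwise disjoint closed sets, so closures already split piece by piece.
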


\begin{proof}
By the gluing construction in Section~3, we have a Morin map $D(F)\colon D(N)\to\mathbb{R}^l$. 
Applying \Cref{Thm4.1} together with \Cref{Prop3.3} and \Cref{Lem3.5}, we obtain
$$
\chi(D(N))=
2\sum_{k:\vspace{1pt}\text{odd}}\bigl(\chi(\overline{A_k^+(F)})-\chi(\overline{A_k^-(F)})\bigr)
+
\sum_{k:\vspace{1pt}\text{odd}}\bigl(\chi(\overline{{}^{\partial\!} A_k^+(F)})-\chi(\overline{{}^{\partial\!} A_k^-(F)})\bigr).
$$
Since $n$ is even, we have $\chi(\partial N)=0$. 
Therefore, $\chi(D(N))=2\chi(N)$. 
Dividing the above equation by $2$, we obtain the desired formula. 
\end{proof}

\begin{rem}
As in \Cref{Rem4.2}, one verifies that the parity of ${}^{\partial}\!\nu(p)$ is also independent of the choice of local coordinates when $n-l$ is odd.
\end{rem}

The preceding discussion deals only with maps whose target manifold is a Euclidean space. 
However, analogous formulas can also be obtained for maps into general manifolds, provided that appropriate assumptions are imposed on the types of singular points or on the dimensions of the manifolds. 
Saeki~\cite{Sae3} established Fukuda’s formula for fold maps into general manifolds. 
Dutertre--Fukui~\cite{DF} obtained a corresponding formula for stable Morin maps into general manifolds.
We refer the reader to these papers for further details.
Using these results, we obtain the following two theorems by arguments similar to the proof of \Cref{Thm4.3}.

\begin{thm}\label{Thm4.5}
Let $N$ be a compact $n$-dimensional manifold with boundary, let $L$ be a connected $l$-dimensional manifold, and let $F\colon N\to L$ be a $\partial$-fold map. 
If $n$ is even and $l$ is odd, then 
$$
\chi(N)=\chi(A_1^+(F))-\chi(A_1^-(F))
+
\frac{1}{2}
\bigl(\chi({}^{\partial\!} A_1^+(F))-\chi({}^{\partial\!} A_1^-(F))\bigr),
$$
where the notation is the same as in \Cref{Thm4.3}. 
\end{thm}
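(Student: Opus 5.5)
The plan is to mirror the proof of \Cref{Thm4.3}, replacing Ruiz's theorem by Saeki's version of Fukuda's formula for fold maps into a general target~\cite{Sae3}. That formula says: for a fold map $f\colon M\to L$ from a closed $m$-manifold into a connected $l$-manifold with $m-l$ odd,
$$
\chi(M)=\chi(A_1^+(f))-\chi(A_1^-(f)),
$$
where the sign of a fold point is the parity of the index as in \Cref{Thm4.1}. Closures are unnecessary here, since $A_1(f)$ is already closed.

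First I form the double $D(F)=F\cup F\colon D(N)\to L$ from Section~3. The collar submersion hypothesis holds automatically, because $S(F)\cap\partial N=\emptyset$ and $S(F)$ is closed. So $D(F)$ is defined on the closed manifold $D(N)=N\cup_{\id}N$.

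By \Cref{Lem3.1}, $S(D(F))=\pi(S(F))\sqcup\pi(S(F))\sqcup\pi(S(F|_{\partial N}))$. \Cref{Lem3.5} then shows the following.
\begin{itemize}
\item Each fold point of $F$ gives a fold point of $D(F)$ with the same normal form, and there are two copies, one in each half.
\item Each boundary fold point, with the normal form of \Cref{Prop3.3}, gives a fold point of $D(F)$ whose normal form replaces the linear term $\sigma x_n$ by $\sigma x_n^2$.
\end{itemize}
Hence $D(F)$ is a fold map. Its index parity at the image of a boundary fold point equals ${}^{\partial\!}\nu(p)$ modulo $2$, because the sign of $x_n^2$ is the sign $\sigma$ of the linear term. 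Since $n-l$ is odd, this parity does not depend on the coordinates, by \Cref{Rem4.2} and the remark following \Cref{Thm4.3}. Therefore
$$
A_1^\pm(D(F))=\pi(A_1^\pm(F))\sqcup\pi(A_1^\pm(F))\sqcup\pi({}^{\partial\!}A_1^\pm(F)).
$$

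Applying Saeki's formula to $D(F)$ gives
$$
\chi(D(N))=2\bigl(\chi(A_1^+(F))-\chi(A_1^-(F))\bigr)+\chi({}^{\partial\!}A_1^+(F))-\chi({}^{\partial\!}A_1^-(F)).
$$
Since $n$ is even, $\partial N$ is a closed odd-dimensional manifold, so $\chi(\partial N)=0$. Then $\chi(D(N))=2\chi(N)-\chi(\partial N)=2\chi(N)$, and dividing by $2$ gives the stated identity.

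The main obstacle is making sure Saeki's formula applies in exactly this form. One point is whether it requires orientability or other hypotheses on the source or target, and the connectedness of $L$ is presumably needed for it. The other point is whether its sign convention (the parity of the index, taken modulo the ambiguity $\lambda\leftrightarrow m-l-\lambda+1$) agrees with the $\nu$ used here. I would first check Saeki's statement to confirm that it needs only a fold map from a closed manifold into a connected target with $m-l$ odd. If it did require stability, I would perturb $D(F)$ to a nearby stable map as in the remark after \Cref{Lem3.5}; fold points are stable germs, so the strata and their signs are unchanged.
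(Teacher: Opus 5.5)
Your proposal is correct and follows the paper's own route. The paper proves \Cref{Thm4.5} ``by arguments similar to the proof of \Cref{Thm4.3}'': it doubles $F$, reads off the fold strata of $D(F)$ and their index parities from \Cref{Lem3.1} and \Cref{Lem3.5} (where the coefficient of $x_n^2$ carries the sign $\sigma$, giving parity ${}^{\partial\!}\nu(p)$), applies Saeki's fold-map version of Fukuda's formula~\cite{Sae3} in place of Ruiz's theorem, and divides by $2$ using $\chi(D(N))=2\chi(N)$, which holds because $\chi(\partial N)=0$.
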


\begin{thm}\label{Thm4.6}
Let $N$ be a compact $n$-dimensional manifold with boundary, let $L$ be a connected $l$-dimensional manifold, and let $F\colon N\to L$ be a $\partial$-Morin map. 
If $n$ is even, $l$ is odd, and $(n,l)$ belongs to the nice dimensions, then 
$$
\chi(N)=
\sum_{k:\vspace{1pt}\text{odd}}
\bigl(
\chi(\overline{A_k^+(F)})-\chi(\overline{A_k^-(F)})
\bigr)
+
\frac{1}{2}
\sum_{k:\vspace{1pt}\text{odd}}
\bigl(
\chi(\overline{{}^{\partial\!}A_k^+(F)})-\chi(\overline{{}^{\partial\!}A_k^-(F)})
\bigr), 
$$
where the notation is the same as in \Cref{Thm4.3}. 
\end{thm}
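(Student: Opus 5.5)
The plan is to follow the proof of \Cref{Thm4.3}, replacing Ruiz's theorem with the Dutertre--Fukui formula for stable Morin maps into a general target $L$. First form the double $D(F)\colon D(N)\to L$ of \Cref{Lem3.5}. By that proposition it is a Morin map. Since $(n,l)$ lies in the nice dimensions, we may perturb it, as in the remark following \Cref{Lem3.5}, to a stable map $\widetilde{D}(F)$ arbitrarily close to $D(F)$.

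The first point is to check that the perturbation preserves the stratification and the signs. Morin germs are stable, and $D(F)$ is already Morin everywhere. So a sufficiently $C^\infty$-small perturbation has singular set $C^\infty$-close to, and isotopic to, $S(D(F))$, with each stratum $A_k$ carried to $A_k$. The parity $\nu$ is locally constant on $A_k$ and well defined because $n-l$ is odd (\Cref{Rem4.2}), so it is preserved as well. Hence the closures $\overline{A_k^{\pm}}$ of the stable map are diffeomorphic to those of $D(F)$. I expect this stability-of-stratification step to be the main technical obstacle. I would handle it with the openness of the $A_k$ jet-transversality conditions and Thom--Boardman stratification, or by choosing the perturbation to be the identity near $S(D(F))$, where $D(F)$ already has stable germs, and only generic elsewhere, for example by fixing multiple-point transversality.

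Next I would read off the strata from \Cref{Lem3.5}. Each stratum $A_k^{\pm}(F)$ in $\Int N$ appears twice in $D(N)$, once in each copy. Each boundary stratum $A_k(F|_{\partial N})$ appears once, as $\pi(A_k(F|_{\partial N}))\subset A_k(D(F))$, and its normal form has $\sigma x_n^2$ in place of $\sigma x_n$. Therefore the count of negative signs for the double equals ${}^{\partial\!}\nu(p)$, and $\pi({}^{\partial\!}A_k^{\pm}(F))=\pi(A_k(F|_{\partial N}))\cap A_k^{\pm}(D(F))$. For the closures, $\overline{A_k(D(F))}$ is the disjoint union of two copies of $\overline{A_k(F)}$ and $\overline{A_k(F|_{\partial N})}$. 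These pieces are disjoint because $S(F)\cap\partial N=\emptyset$ (\Cref{Lem3.1}).

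Finally, apply the Dutertre--Fukui formula, in the form stated for stable Morin maps into a connected $l$-manifold with $n-l$ odd, to obtain
$$
\chi(D(N))=2\sum_{k\ \text{odd}}\bigl(\chi(\overline{A_k^+(F)})-\chi(\overline{A_k^-(F)})\bigr)+\sum_{k\ \text{odd}}\bigl(\chi(\overline{{}^{\partial\!}A_k^+(F)})-\chi(\overline{{}^{\partial\!}A_k^-(F)})\bigr).
$$
Since $n$ is even, $\partial N$ is a closed odd-dimensional manifold, so $\chi(\partial N)=0$ and $\chi(D(N))=2\chi(N)$. Dividing by $2$ gives the formula. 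One should also confirm that the sign convention in Dutertre--Fukui (which strata are counted with $+$) agrees with Ruiz's convention used in \Cref{Thm4.3}; the statement asserts the same formula, so I would align the conventions at this step.
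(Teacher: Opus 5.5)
Your proposal is correct and takes the same approach as the paper, which proves this theorem ``by arguments similar to the proof of \Cref{Thm4.3}''. That argument doubles $F$, uses the nice-dimension hypothesis to perturb $D(F)$ to a stable Morin map with the same strata (as in the remark following \Cref{Lem3.5}), applies the Dutertre--Fukui formula on the closed double, and divides by $2$ using $\chi(\partial N)=0$. Your added care over the stratum-preserving perturbation and the sign conventions fills in details the paper leaves implicit.
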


The preceding theorems are given under the assumption that $n$ is even and $l$ is odd. 
When $n$ and $l$ are odd, we obtain the following formula.
It follows directly from the equation $\chi(N)=\frac{1}{2}\chi(\partial N)$ and \Cref{Thm4.1}.

\begin{prop}
Let $N$ be a compact $n$-dimensional manifold with boundary and let $F\colon N\to\mathbb{R}^l$ be a $\partial$-Morin map. 
If $n$ and $l$ are odd, then
$$
\chi(N)
=
\frac{1}{2}\sum_{k:\vspace{1pt}\text{odd}}\bigl(\chi(\overline{A_k^+(F|_{\partial N})})-\chi(\overline{A_k^-(F|_{\partial N})}\bigr), 
$$
where the notation is the same as in \Cref{Thm4.1}. 
\end{prop}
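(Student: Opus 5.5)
Since $n$ is odd, the Euler characteristic of the boundary determines that of $N$, so no gluing is needed and I will work directly with the closed manifold $\partial N$. The first step is the standard identity $\chi(N)=\frac{1}{2}\chi(\partial N)$ for a compact odd-dimensional manifold $N$ with boundary. I would derive it from the double, which is also the closed manifold underlying the gluing construction of Section~3. Let $D(N)=N\cup_{\id}N$. It is a closed odd-dimensional manifold, so $\chi(D(N))=0$ by Poincaré duality. On the other hand, the Mayer--Vietoris sequence for the decomposition into two copies of $N$ overlapping along a collar of $\partial N$ gives $\chi(D(N))=2\chi(N)-\chi(\partial N)$. Comparing the two expressions yields $\chi(N)=\frac{1}{2}\chi(\partial N)$.

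The second step is to apply \Cref{Thm4.1} to the restriction $f=F|_{\partial N}\colon \partial N\to\mathbb{R}^l$. Here $\partial N$ is a closed manifold of dimension $m=n-1$. Since $F$ is a $\partial$-Morin map, $f$ is a Morin map by definition. Because $n>l$, we have $m\geq l$. Since $n$ and $l$ are both odd, $m-l=n-1-l$ is odd. These are exactly the hypotheses of Ruiz's theorem, so
\[
\chi(\partial N)=\sum_{k:\text{odd}}\bigl(\chi(\overline{A_k^+(f)})-\chi(\overline{A_k^-(f)})\bigr).
\]
By \Cref{Rem4.2}, the sets $A_k^\pm(f)$ are well-defined because $m-l$ is odd. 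Substituting this expression into $\chi(N)=\frac12\chi(\partial N)$ gives the stated formula.

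There is no serious obstacle. The only points needing care are the parity check that $n-1-l$ is odd and the sign of the Mayer--Vietoris relation, which I will verify on the example $N=D^1$: there $\chi(N)=1$ and $\chi(\partial N)=2$, consistent with $\chi(N)=\frac12\chi(\partial N)$.
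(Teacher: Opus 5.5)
Your proposal is correct and follows the paper's argument: the paper also combines the identity $\chi(N)=\frac{1}{2}\chi(\partial N)$ for odd $n$ with Ruiz's theorem (\Cref{Thm4.1}) applied to the Morin map $F|_{\partial N}$ on the closed $(n-1)$-manifold $\partial N$, for which $n-1-l$ is odd. Your derivation of the Euler characteristic identity via the double and Mayer--Vietoris just spells out a standard step that the paper uses without proof.
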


Similarly, using the results of Saeki~\cite{Sae3} and Dutertre-Fukui~\cite{DF}, one can obtain analogous formulas for maps whose targets are general manifolds.

\subsection{Relative version of Sakuma's theorem}
In this subsection, using the relative version of Fukuda's theorem in Subsection~4.1, we derive a relative version of Sakuma's theorem~\cite{Sak}. 
We first give our theorem, which is the relative version of Sakuma's theorem.

\begin{thm}\label{Thm4.8}
Let $N$ be a compact oriented $4$-manifold whose boundary is an integral homology sphere and let $L$ be a connected parallelizable $3$-manifold. 
If there exists a $\partial$-fold map $F\colon N\to L$, then 
$$
\sigma(N)+S(F)\cdot S(F)\equiv 2+\chi({}^{\partial\!}A_1^+(F))-\chi({}^{\partial\!}A_1^-(F)) \pmod 4, 
$$
where $\sigma(N)$ is the signature of $N$, $S(F)\cdot S(F)$ is the self-intersection number of $S(F)$ in $N$, and ${}^{\partial\!}A_1^{\pm}(F)$ are the same as those in \Cref{Thm4.3}.
\end{thm}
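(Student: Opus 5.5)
The plan is to reduce the congruence to a statement about a closed $4$-manifold and then use two inputs. The first is \Cref{Thm4.5}, which relates $\chi(N)$ to the fold surfaces and to the boundary fold set. The second is Klug's formula \cite{Klu} for fold maps from closed oriented $4$-manifolds to parallelizable $3$-manifolds. I take Klug's formula to express $\sigma(M)+S(f)\cdot S(f)$ modulo $4$ through $\chi(M)$ and the Euler characteristics of the definite and indefinite fold surfaces. I have not checked its exact shape; the argument below depends on it.

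\emph{Step 1: rewrite the right-hand side.} Since $n=4$ is even and $l=3$ is odd, \Cref{Thm4.5} gives
\[
\chi({}^{\partial\!}A_1^+(F))-\chi({}^{\partial\!}A_1^-(F))=2\chi(N)-2\bigl(\chi(A_1^+(F))-\chi(A_1^-(F))\bigr).
\]
So the theorem is equivalent to
\[
\sigma(N)+S(F)\cdot S(F)\equiv 2+2\chi(N)-2\chi(A_1^+(F))+2\chi(A_1^-(F)) \pmod 4 .
\]
This is an interior statement about $N$ and the closed surface $S(F)\subset\Int N$, and it has the same shape as Sakuma's congruence for closed manifolds. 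The constant $2$ should come from the missing $4$-ball, via $\chi(N)=\chi(\widehat N)-1$.

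\emph{Step 2: pass to a closed manifold.} The natural move is to form the double $D(F)\colon D(N)\to L$. By \Cref{Lem3.1} and \Cref{Lem3.5} it is a fold map with
\[
S(D(F))=\pi(S(F))\sqcup\pi(\overline{S(F)})\sqcup\pi(S(F|_{\partial N})).
\]
The key facts about the double are the following.
\begin{itemize}
\item $\chi(D(N))=2\chi(N)$.
\item $\sigma(D(N))=0$ by Novikov additivity.
\item The boundary surface $S(F|_{\partial N})$ lies in the integral homology sphere $\partial N$, where it is null-homologous. Hence its self-intersection in $D(N)$ vanishes and it meets the other components trivially.
\item Therefore $S(D(F))\cdot S(D(F))=S(F)\cdot S(F)+S(\overline F)\cdot S(\overline F)$, and the two terms cancel under the orientation reversal.
\end{itemize}

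\emph{The main obstacle.} As the list shows, doubling kills both $\sigma(N)$ and $S(F)\cdot S(F)$, so it only returns \Cref{Thm4.5}. To capture $\sigma(N)$ I would instead cap off with a homotopy $4$-ball, or with an acyclic $4$-manifold $W$ bounded by $\partial N$. Then I would use the integral homology sphere hypothesis to extend the collar submersion over $W$ up to fold singularities whose contribution can be computed explicitly.

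Concretely, I would try to choose $W$ with $\sigma(W)=0$ and $\chi(W)=1$. I would then arrange that the extension $F_W$ is a $\partial$-fold map whose fold surface is null-homologous and contributes a known amount, reflecting the boundary fold set with the opposite sign convention. Klug's formula would be applied to $F\cup F_W$ on $N\cup_{\id}\overline W$. For this glued manifold:
\begin{itemize}
\item $\sigma(N\cup_{\id}\overline W)=\sigma(N)$ by Novikov additivity, since $\sigma(W)=0$;
\item the intersection pairing on $S(F)\cup S(F_W)$ restricts to $S(F)\cdot S(F)$, since $H_2$ of $\partial N$ and of $W$ vanish;
\item $\chi(N\cup_{\id}\overline W)=\chi(N)+1$.
\end{itemize}
Substituting these into Klug's formula and using Step 1 should give the desired congruence. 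The factor $2$ multiplying $\chi(N)+1$ produces the constant $2$.

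If extending over $W$ proves too hard, the fallback is to redo Klug's argument relatively on $N$. That argument is a characteristic-class computation via the Pontryagin class and the normal bundle of the fold surface. The idea would be to trivialize $TN\cong\varepsilon\oplus F^*TL$ along the collar, which the parallelizability of $L$ allows, and to track the boundary correction term. That correction term should equal $\tfrac12\bigl(\chi({}^{\partial\!}A_1^+)-\chi({}^{\partial\!}A_1^-)\bigr)$ through the relation from \Cref{Thm4.5}.
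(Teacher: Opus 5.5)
\textbf{There is a genuine gap.} Your Step~1 is correct and matches how the paper uses \Cref{Thm4.5}. Since $-2\chi(A_1^+(F))+2\chi(A_1^-(F))\equiv 2\chi(S(F))\pmod 4$, the theorem is equivalent to
\[
\sigma(N)+S(F)\cdot S(F)\equiv 2+2\chi(N)+2\chi(S(F))\pmod 4 .
\]
The gap lies in how you propose to prove this, and it comes from misidentifying Klug's result. Klug's theorem is not a formula for fold maps on closed manifolds. It is a relative Rokhlin-type congruence for a closed \emph{characteristic} surface $S\subset\Int N$ in a compact oriented $4$-manifold whose boundary is an integral homology sphere:
\[
2\chi(S)\equiv\sigma(N)-S\cdot S\pmod 4.
\]
It applies to $N$ itself, so no closed manifold is needed. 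Your proposal is missing two facts.
\begin{enumerate}
\item[(a)] $S(F)$ is characteristic in $N$: its $\mathbb{Z}_2$-class is Poincar\'e dual to $w_2(N)$. This is the Thom polynomial of fold singularities, which reduces to $w_2(N)$ because $L$ is parallelizable; it applies since $S(F)\subset\Int N$.
\item[(b)] $\chi(N)\equiv\sigma(N)+1\pmod 2$. This follows from $b_0-b_4=1$, $b_1=b_3$ and $\sigma\equiv b_2\pmod 2$.
\end{enumerate}
Fact (b) gives $2+2\chi(N)\equiv 2\sigma(N)\pmod 4$; this is the precise form of your ``missing $4$-ball'' heuristic. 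After that, the displayed congruence becomes exactly $\sigma(N)-S(F)\cdot S(F)\equiv 2\chi(S(F))\pmod 4$, which is (a) combined with Klug.

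The capping route cannot replace these ingredients, for four reasons.
\begin{enumerate}
\item A smooth acyclic $W$ with $\partial W=\partial N$ need not exist. Such a $W$ is spin with $\sigma(W)=0$, so by Rokhlin's theorem $\partial N$ would have vanishing Rokhlin invariant. This fails, for example, for the Poincar\'e homology sphere.
\item Extending the collar submersion over $W$ to a $\partial$-fold map is an existence problem that you do not address.
\item Even if such an $F_W$ exists, gluing and the closed-manifold congruence $\sigma+S\cdot S\equiv 0\pmod 4$ only give
\[
\sigma(N)+S(F)\cdot S(F)\equiv\sigma(W)+S(F_W)\cdot S(F_W)\pmod 4 .
\]
This says only that the left-hand side depends on the collar data. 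Computing the right-hand side ``explicitly'' is the same problem for $W$, so the argument is circular. The exception is when $F_W$ is a submersion, but such extensions generally do not exist.
\item $H_2(W)=0$ does not force $S(F_W)\cdot S(F_W)=0$ when the fold surface is non-orientable, because the self-intersection is then a normal Euler number, not a homological pairing.
\end{enumerate}
Your fallback of ``redoing Klug's argument relatively'' is too vague to fill the gap. What is needed is to see that Klug's result is already relative and concerns characteristic surfaces, and that the fold surface is characteristic.
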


\begin{proof}
We first prepare three lemmas. 
For a closed characteristic surface embedded in $N$, Klug~\cite[Theorem~6]{Klu} and Kirby-Taylor~\cite[Lemma~3.6]{KT} give the following relation between its self-intersection number and the signature of $N$.

\begin{lem}\label{Lem4.9}
Let $N$ be a compact oriented $4$-manifold whose boundary is an integral homology sphere, and let $S\subset \Int N$ be a closed characteristic surface. 
Then, 
$$
2\chi(S)
\equiv 
\sigma(N)-S\cdot S \pmod 4. 
$$
\end{lem}

To apply the preceding lemma to $S(F)\subset\Int N$, we establish the following lemma.

\begin{lem}\label{Lem4.10}
Let $N$ be a compact oriented $4$-manifold with boundary, let $L$ be a parallelizable $3$-manifold, and let $F\colon N\to L$ be a $\partial$-fold map. 
Then, $S(F)\subset\Int N$ is a characteristic surface. 
\end{lem}

\begin{proof}
Throughout this proof, all cohomology and homology are taken with coefficients in $\mathbb{Z}_2$.
Applying \cite[Theorems~5.26 and~9.2]{Tan} to $F$, we obtain 
$$
i^*\mathrm{PD}(j_*[S(F)])=w_2(N), 
$$ 
where, $i\colon (N,\emptyset)\to (N,\partial N)$ and $j\colon S(F)\to N$ are the natural inclusions, and $i^*$ and $j_*$ are the induced homomorphisms in cohomology and homology, respectively.
Moreover, $\mathrm{PD}\colon H_2(N)\to H^2(N,\partial N)$ is the Poincar\'e duality isomorphism.
Let $\iota$ denote the natural inclusion $i\circ j\colon (S(F),\emptyset)\to(N,\partial N)$.
It follows that ${\rm PD}(\iota_*[S(F)])=w_2(N)$ by the naturality of Poincar\'e duality.
In the notation of Klug, we have $\iota_*[S(F)]=[S(F),\emptyset]$, and hence, ${\rm PD}( [S(F),\emptyset])=w_2(N)$.
Therefore, $S(F)\subset\Int N$ is a characteristic surface. 
\end{proof}

Moreover, for a compact oriented $4$-manifold whose boundary is an integral homology sphere, its signature and Euler characteristic satisfy the following congruence.

\begin{lem}\label{Lem4.11}
Let $N$ be a compact oriented $4$-manifold whose boundary is an integral homology sphere. 
Then, 
$$
\chi(N)\equiv\sigma(N)+1 \pmod 2. 
$$ 
\end{lem}

\begin{proof}
Since $\partial N$ is a homology sphere, we have $b_0(N)-b_4(N)=1$ and $b_1(N)=b_3(N)$. 
Hence, $\chi(N)\equiv 1+b_2(N)\pmod2$.
On the other hand, we have $\sigma(N)\equiv b_2(N)\pmod 2$ by the definition of the signature.
Combining these congruences, we obtain the desired one.
\end{proof}

We prove the theorem using the preceding lemmas. 
Reducing the formula in \Cref{Thm4.5} modulo $4$, we obtain 
$$
2\chi(N)\equiv 2\chi(S(F))
+
\chi({}^{\partial\!}A_1^+(F))-\chi({}^{\partial\!}A_1^-(F)) \pmod4. 
$$
On the other hand, by \Cref{Lem4.9} and \Cref{Lem4.10}, we have
$$
2\chi(S(F))\equiv \sigma(N)-S(F)\cdot S(F) \pmod 4. 
$$
Combining these two congruences yields
$$
2\chi(N)\equiv\sigma(N)-S(F)\cdot S(F)+ 
\chi({}^{\partial\!}A_1^+(F))-\chi({}^{\partial\!}A_1^-(F)) \pmod4.
$$
Moreover, substituting the congruence in \Cref{Lem4.11} into the preceding one, we obtain
$$
\sigma(N)+S(F)\cdot S(F)\equiv
2+\chi({}^{\partial\!}A_1^+(F))-\chi({}^{\partial\!}A_1^-(F)) \pmod 4.
$$
This is the desired congruence. 
\end{proof}

\begin{rem}
The congruence obtained by Sakuma~\cite{Sak} was refined to an equality by Ohmoto--Saeki--Sakuma~\cite{OSS}. 
It remains unknown whether the congruence established here admits an analogous refinement to an equality.
\end{rem}

We next establish a relation with the Hirzebruch defect. 
To this end, we define a stable framing of a $3$-manifold induced by a submersion. 
We refer to \cite{Ati, KM} for the definition of a framing and a stable framing.
Let $N$ be a compact oriented $4$-manifold with boundary, and let $F\colon N\to\mathbb{R}^3$ be a map that has no singular points around the boundary.
More precisely, let $V$ be a collar neighborhood of $\partial N$ such that the restriction $F|_{V}\colon V\to\mathbb{R}^3$ is a submersion.
Choose a nowhere vanishing section $\xi_0$ of the line bundle $\ker dF|_V$. 
Let $dx_1$, $dx_2$, and $dx_3$ denote the standard $1$-forms on $\mathbb{R}^3$, and consider their pullbacks $(F|_V)^*(dx_1)$, $(F|_V)^*(dx_2)$, and $(F|_V)^*(dx_3)$ on $V$. 
After fixing a Riemannian metric on $V$, let $\xi_1$, $\xi_2$, and $\xi_3$ be the vector fields on $V$ dual to these $1$-forms.
Then, $\phi=(\xi_0, \xi_1, \xi_2, \xi_3)$ defines a framing of $TV$.
Restricting this framing to $\partial N$, we obtain a stable framing of $\partial N$, which we denote by $\phi_{\partial N}$.

\begin{cor}
Let $N$ be a compact oriented $4$-manifold whose boundary is an integral homology sphere. 
Suppose that there exists a stable $\partial$-fold map $F\colon N\to\mathbb{R}^3$ whose singular point set is oriented.
Then,  
$$
h(\phi_{\partial N})\equiv 
2+\chi({}^{\partial\!}A_1^+(F))-\chi({}^{\partial\!}A_1^-(F)) \pmod 4, 
$$
where $h(\phi_{\partial N})$ denotes the Hirzebruch defect of the stable framing $\phi_{\partial N}$ induced by $F|_{V}$.
\end{cor}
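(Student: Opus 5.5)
The plan is to reduce the corollary to \Cref{Thm4.8}. Since that theorem already gives $\sigma(N)+S(F)\cdot S(F)\equiv 2+\chi({}^{\partial\!}A_1^+(F))-\chi({}^{\partial\!}A_1^-(F)) \pmod 4$, with $\mathbb{R}^3$ parallelizable, it suffices to prove
$$
h(\phi_{\partial N})\equiv \sigma(N)+S(F)\cdot S(F)\pmod 4 .
$$
I will use the Kirby--Melvin/Atiyah description of the Hirzebruch defect of a stable framing of a $3$-manifold bounding $N$, namely $h(\phi_{\partial N})=p_1(N,\phi)[N,\partial N]-3\sigma(N)$. Here $p_1(N,\phi)$ is the relative first Pontryagin class of $TN$ with respect to the framing $\phi$ on the collar $V$. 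Since $-3\sigma(N)\equiv\sigma(N)\pmod 4$, the corollary then follows from the exact equality
$$
p_1(N,\phi)[N,\partial N]=S(F)\cdot S(F).
$$

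This equality is a relative version of the Ohmoto--Saeki--Sakuma formula for stable maps from closed $4$-manifolds to $\mathbb{R}^3$, and I will prove it by localizing $p_1$ to $S(F)$. On $N\setminus S(F)$ the map $F$ is a submersion, so $TN\cong \ker dF\oplus F^*T\mathbb{R}^3$ there. Using the standard framing of $\mathbb{R}^3$ and a nowhere vanishing section of the line bundle $\ker dF$, this gives a trivialization of $TN$ over $N\setminus S(F)$. The section exists on $V$ by construction of $\phi$. Because $S(F)$ is an oriented surface, I expect the argument of \cite{OSS} to show that $\ker dF$ is trivial away from $S(F)$ after a choice that agrees with $\xi_0$ on $V$. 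The resulting framing of $TN|_{N\setminus S(F)}$ extends $\phi$ on $V$. Hence $p_1(N,\phi)$ is supported in a tubular neighbourhood $\nu S(F)$, which lies in $\operatorname{Int} N$ because $S(F)\cap\partial N=\emptyset$.

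It then remains to compute the local contribution near the fold surface. Near a fold point, $F$ has the normal form of \Cref{Def2.5}, and the obstruction to extending the trivialization across $\nu S(F)$ is a class depending only on the normal bundle of $S(F)$ and the fold type. The computation in \cite{OSS} identifies it with the Euler class of the normal bundle, so that $p_1(N,\phi)[N,\partial N]=e(\nu S(F))[S(F)]=S(F)\cdot S(F)$. Since this is purely local on $\nu S(F)\subset\operatorname{Int}N$, the closed-manifold computation should transfer verbatim. This step is the main obstacle: I must check that the trivialization near $S(F)$ glues compatibly with the prescribed $\phi$ on $V$, and that orientability of $S(F)$ (the hypothesis of the corollary) is exactly what makes the kernel line field globally coorientable. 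If the direct localization is troublesome, I will instead cap off $\partial N$ with a framed $4$-manifold carrying a submersion to $\mathbb{R}^3$ extending $F|_V$, apply \cite{OSS} to the resulting closed manifold, and use additivity of signature and of relative $p_1$.
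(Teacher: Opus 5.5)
Your reduction is the paper's own. Its proof quotes \cite[Lemma~4.1]{KS} for $h(\phi_{\partial N})=S(F)\cdot S(F)-3\sigma(N)$ and then applies \Cref{Thm4.8} with $L=\mathbb{R}^3$ and $-3\sigma(N)\equiv\sigma(N)\pmod 4$. Since $h(\phi_{\partial N})=p_1(N,\phi)[N,\partial N]-3\sigma(N)$, the quoted lemma is exactly your identity $p_1(N,\phi)[N,\partial N]=S(F)\cdot S(F)$. The only real difference is that you try to prove the cited lemma yourself, and that part needs three corrections.

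First, $\ker dF$ is coorientable on $N\setminus S(F)$ simply because $TN\cong\ker dF\oplus F^*T\mathbb{R}^3$ there, with $N$ and $\mathbb{R}^3$ oriented. Orientability of $S(F)$ plays no role in this. It is used instead to trivialize the normal line $Q$ of the immersion $F|_{S(F)}$ and to orient $\nu=\nu S(F)$.

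Second, the closed formula of \cite{OSS} does not by itself give the relative local term; applied to $D(F)$ it only yields $0=0$. Fortunately the local computation is short.
\begin{itemize}
\item Let $A$ be the fibrewise Hessian of $F$ on $\nu$, with values in $Q\cong\varepsilon^1$. At $v\in S(\nu)$, the kernel of $dF$ is homotopic to $(Av)^\perp\subset\nu$.
\item $dF$ carries $TS(F)\oplus\langle Av\rangle$ onto $F^*T\mathbb{R}^3$, sending $Av$ to a positive multiple of the unit normal.
\item Hence, after stabilizing and up to a change of frame pulled back from $S(F)$, $\phi$ on $S(\nu)$ is the sum of two framings. One is the Gauss framing of $\pi^*(TS(F)\oplus\varepsilon^1)$, which extends over the disk bundle. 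The other is the framing of $\pi^*\nu$ given by the section $Av/|Av|$.
\item That second framing has relative Euler class $\operatorname{sign}(\det A)\,U$, where $U$ is the Thom class. So the local $p_1$ is $U\cup U=U\cup\pi^*e(\nu)$.
\item This evaluates to $e(\nu)[S(F)]=S(F)\cdot S(F)$ for definite and indefinite folds alike. No cusps occur, by the $\partial$-fold hypothesis.
\end{itemize}

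Third, drop the fallback. A cap carrying a submersion that extends $F|_V$ is exactly the kind of non-singular extension studied in Section~7, and it need not exist. Indeed, \cite{OSS} applied to the closed union would force $S(F)\cdot S(F)\equiv 0\pmod 3$. Capping with a singular extension instead only shows, as in \Cref{Lem7.6}, that $p_1(\,\cdot\,,\phi)-S\cdot S$ is independent of the extension, not that it vanishes.
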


\begin{proof}
By \cite[Lemma~4.1]{KS}, we have $h(\phi_{\partial N})=S(F)\cdot S(F)-3\sigma(N)$.
Therefore, the desired congruence follows from \Cref{Thm4.8}.
\end{proof}

\section{Singular fibers and the topology of $3$-manifolds with boundary}
In this section, we establish a relation between the simplicial volume of a compact $3$-manifold with boundary and the singular fibers of a stable $\partial$-fold map into $\mathbb{R}^2$. 
As an application, we show that under suitable conditions on the map, any compact oriented irreducible $3$-manifold with a toroidal boundary admitting such a map is a graph manifold.

\subsection{Classification of singular fibers and their behavior under doubling}
In this subsection, we first review the classifications of fibers of a stable map from closed $3$-manifolds to the plane, due to Kushner--Levine--Porto~\cite{KLP}, and that of maps from compact $3$-manifolds with boundary into the plane, due to Saeki--Yamamoto~\cite{SY2}. 
Then, we determine how the fibers change under the double.

We begin by recalling the classifications by Kushner--Levine--Porto and Saeki--Yamamoto. 
For this purpose, we introduce the notion of a fiber of a smooth map, and an equivalence relation between fibers. 
The following definitions are taken from \cite{Sae6, SY2}.

\begin{defi}
Let $M$ be an $m$-dimensional manifold with possibly non-empty boundary, and let $L$ be an $l$-dimensional manifold without boundary, where $m\geq l$. 
For a map $f\colon M\to L$ and a point $q\in L$, the map germ of $f$ along $f^{-1}(q)$, 
$$
f\colon (M,f^{-1}(q))\to(L,q)
$$
is called the \emph{fiber} of $f$ over $q$. 
\end{defi}

We define an equivalence relation between fibers as well.

\begin{defi}
For $j=0,1$, let $M_j$ be an $m$-dimensional manifold with possibly non-empty boundary, let $L_j$ be an $l$-dimensional manifold without boundary, and let $f_j\colon M_j\to L_j$ be maps, where $m\geq l$. 
Given points $q_j\in L_j$, the fibers of $f_0$ over $q_0$ and $f_1$ over $q_1$ are said to be \emph{equivalent} if there exist neighborhoods $U_j$ of $q_j$ in $L_j$ and diffeomorphisms $\theta\colon f_0^{-1}(U_0)\to f_1^{-1}(U_1)$ and $\eta\colon U_0\to U_1$ with $\eta(q_0)=q_1$ that make the following diagram commutative:
$$
\xymatrix{
(f^{-1}(U_0),f^{-1}(q_0)) \ar[r]^\theta \ar[d]_{f_0} & (f^{-1}(U_1),f^{-1}(q_1)) \ar[d]^{f_1} \\
(U_0,q_0) \ar[r]_\eta          & (U_1,q_1)
}
$$
\end{defi}

\begin{rem}
In \cite{Sae6, SY1, SY2}, the equivalence relation defined above is called \emph{$C^{\infty}$-equivalence}.
Replacing all the diffeomorphisms in the definition by homeomorphisms yields the notion of \emph{$C^{0}$-equivalence}. 
Since only $C^{\infty}$-equivalence is used in this paper, we refer to it simply as equivalence.   
\end{rem}

Up to the equivalence defined above, the fibers of stable maps from closed orientable $3$-manifolds to the plane are classified in \cite{KLP}, and an explicit description is given in \cite{Sae6}.
For the reader’s convenience, we recall the result from \cite{Sae6} below.

\begin{prop}[Kushner--Levine--Porto]\label{Prop5.4}
Let $M$ be a closed orientable $3$-manifold and let $f\colon M\to\mathbb{R}^2$ be a stable map. 
Then, every fiber of $f$ is equivalent to the disjoint union of one of the following fibers and finitely many copies of a fiber $0^0$ of a trivial circle bundle:
\begin{enumerate}[
  label=\normalfont(\arabic*),
  leftmargin=*,
  labelsep=.5em,
  itemsep=.3em,
  topsep=.45em,
  parsep=0pt, 
  leftmargin=2em
]
\item one of the fibers shown in~\Cref{Fig2};
\item a disconnected fiber ${\rm II}^{\mu,\nu}$, defined as the disjoint union of the fibers ${\rm I}^{\mu}$ and ${\rm I}^{\nu}$ shown in~\Cref{Fig2}, where $\mu, \nu\in\{0,1\}$.
\end{enumerate}
\end{prop}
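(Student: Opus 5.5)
This statement is the Kushner--Levine--Porto classification, quoted from \cite{KLP} in the form given by \cite{Sae6}; in the paper it is recalled rather than proved. If I were to prove it, I would follow the standard local-to-global analysis of fibers of stable maps. Fix $q\in\mathbb{R}^2$.

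\textbf{Step 1: reduce to a connected graph.} Since $f$ is stable, the singular value set $f(S(f))$ is an immersed curve with only normal crossings and cusps. Moreover, $f|_{S(f)}$ is injective away from the preimages of crossings. Hence $f^{-1}(q)$ contains at most two singular points, and they lie in distinct components when $q$ is a crossing.

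The fiber is a compact $1$-dimensional complex in the orientable $3$-manifold $M$. Every component containing no singular point is a regular circle. By the submersion theorem, its germ is the fiber $0^0$ of a trivial circle bundle; triviality uses orientability of $M$. This strips off the copies of $0^0$ and leaves the singular components.

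\textbf{Step 2: classify by the number of singular points.}
\begin{enumerate}
\item \emph{Regular value.} The fiber is $0^0$ only.
\item \emph{One singular point $p$.} If $p$ is a definite fold, the component is a point; this is ${\rm I}^0$. If $p$ is an indefinite fold, the local form $x_1,\ x_2^2-x_3^2$ shows the component is a figure-eight graph. Orientability of $M$ forces the two branches to be attached so that a neighborhood gives the orientable configuration, namely ${\rm I}^1$. If $p$ is a cusp, the local normal form gives the cusp fibers of codimension two.
\item \emph{Two singular points $p,p'$ (crossing values).} If they lie in distinct components, the fiber is ${\rm II}^{\mu,\nu}={\rm I}^\mu\sqcup{\rm I}^\nu$. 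If they lie in the same component, that component is a $4$-valent (or point-plus) graph with two vertices. One enumerates the ways to connect the four half-edges of two indefinite fold points by arcs; definite-times-indefinite is impossible in one component. Discarding the non-orientable configurations, by checking orientability of a regular neighborhood of the fiber, leaves exactly the types ${\rm II}^2$ and ${\rm II}^3$ (plus the remaining ones) in \Cref{Fig2}.
\end{enumerate}

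\textbf{Step 3: show the germ is determined by the combinatorics.} The germ along the fiber is determined up to $C^\infty$ equivalence by the graph together with the local normal forms, i.e.\ by the attaching data of the fold handles. This is the main obstacle. I would handle it by building the neighborhood $f^{-1}(U)$ from the local models via a relative version of Ehresmann's fibration theorem, gluing trivial circle-bundle pieces over the regular parts. I would then argue that the gluing diffeomorphisms are isotopic to standard ones, because they are orientation-compatible maps of the circle and of disks.
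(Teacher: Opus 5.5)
The paper gives no proof of this proposition; it quotes it from \cite{KLP} in the form given in \cite{Sae6}. Your outline is essentially the argument of \cite{Sae6}: strip off regular circle components, use stability to control the singular multigerm over $q$, enumerate the half-edge attachments allowed by orientability, and then show that these data determine the $C^\infty$ germ. Several points need correcting, however.

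\textbf{The false claim in Step~1.} Its last clause is wrong: over a crossing of $f(S(f))$ the two singular points need \emph{not} lie in distinct components. The connected fibers ${\rm II}^2$ and ${\rm II}^3$ are exactly the case where two indefinite fold points lie in one component. They are also precisely the fibers the paper relies on in \Cref{Prop5.8} and \Cref{Thm5.12}. Your two-point case in Step~2 contradicts that clause and does treat this case, so the clause should simply be deleted.

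\textbf{Where orientability enters.} It plays no role in the triviality of the germ along a regular circle component: every circle bundle over a disk is trivial. It enters only through the half-edge pairings, and this is what makes Step~2 concrete.
\begin{itemize}
\item The orientations of $M$ and $\mathbb{R}^2$ orient every arc of regular points of a fiber.
\item In the model $(x_1,x_2^2-x_3^2)$, opposite half-edges of the cross in the $(x_2,x_3)$-plane have the same type (both outgoing or both incoming), and adjacent ones have opposite types. Hence every arc of the fiber runs from an outgoing to an incoming half-edge.
\item At a single fold point, this excludes pairing opposite half-edges, the configuration whose nearby regular fibers are one circle on both sides.
\item In the connected two-vertex case, exactly $k\in\{1,2\}$ arcs are directed from $p$ to $p'$ (and $k$ from $p'$ to $p$). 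The case $k=1$ gives ${\rm II}^2$: a loop at each vertex plus two arcs joining them. The case $k=2$ gives ${\rm II}^3$: four arcs joining $p$ and $p'$. There are no ``remaining ones''.
\end{itemize}
Similarly, a cusp gives a single type of fiber, namely a circle that is regular except for a semicubical cusp at $p$.

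\textbf{The common target chart in Step~3.} $C^\infty$ equivalence uses a single $\eta$, so the local normal forms must be taken with respect to one common target chart. At a crossing, choose target coordinates in which the two fold value curves are the coordinate axes. Near $p$, put $x_1=y_1\circ f$ and apply the parametrized Morse lemma to $y_2\circ f$ on the levels of $x_1$; this gives $(x_1,\pm x_2^2\pm x_3^2)$ with no further change of target. Symmetrically, near $p'$ you get $(\pm x_1'^2\pm x_3'^2,x_2')$.

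\textbf{What the extension step actually uses.} Remove small neighborhoods of the singular points whose boundaries are transverse to the fibers. The remaining pieces of $f^{-1}(U)$ are trivial $[0,1]$-bundles over $U$, not circle or disk pieces. The source diffeomorphism extends over them because the end data match half-edge to half-edge, so they respect the orientation of the interval fibers. That is the fact doing the work, rather than an isotopy argument for maps of circles and disks.
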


\begin{figure}
    \centering
    \includegraphics[width=0.5\linewidth]{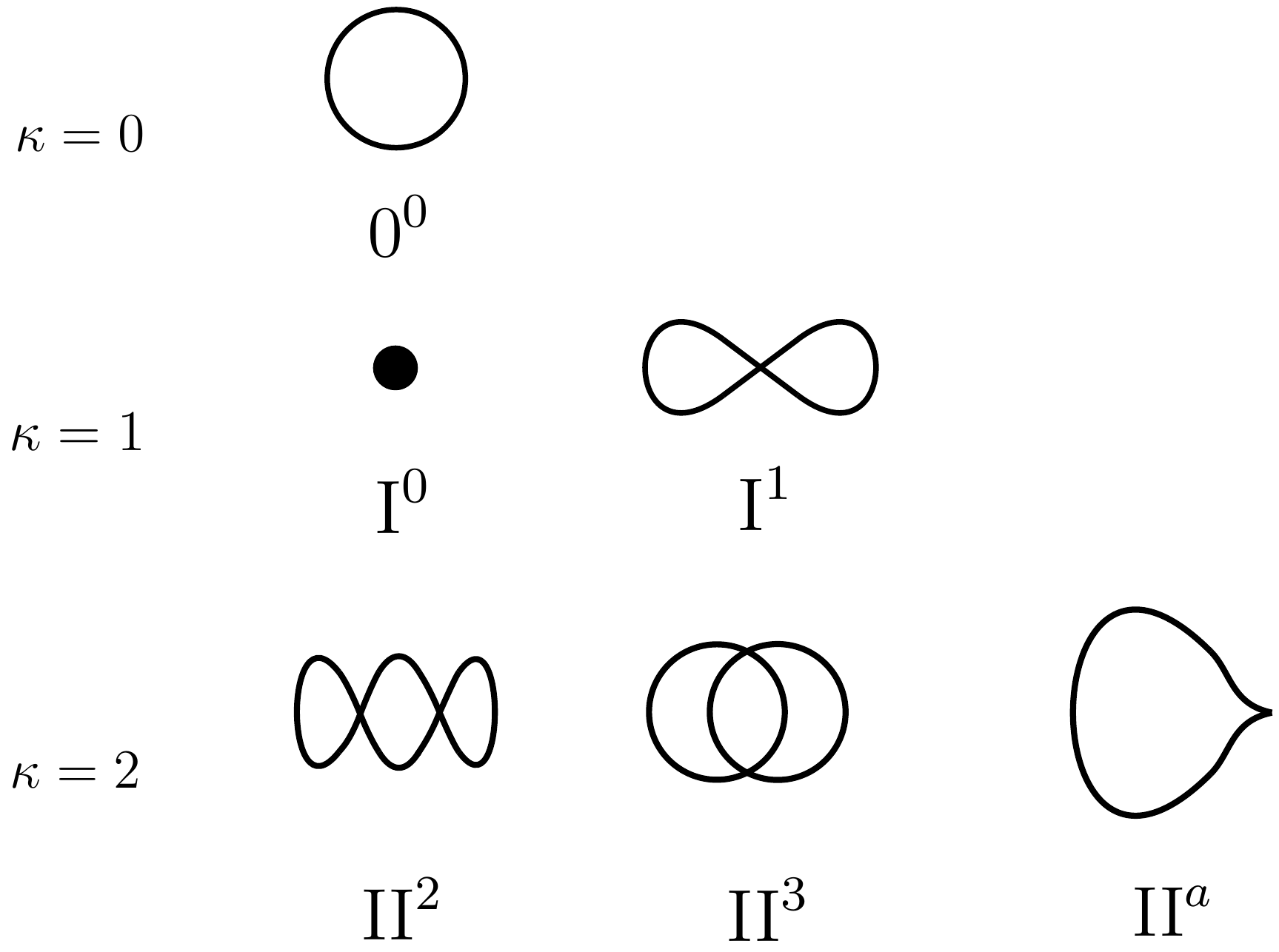}
    \caption{Fibers of stable maps from closed orientable $3$-manifolds to $\mathbb{R}^2$.}
    \label{Fig2}
\end{figure}

In \Cref{Fig2}, dots, crossings, and sharp points represent definite fold points, indefinite fold points, and cusp points, respectively. 
Moreover, $\kappa$ denotes the codimension in $L$ of the points corresponding to the fiber.
See \cite{Sae6} for further details.

Moreover, the fibers of stable maps from compact orientable $3$-manifolds with boundary to the plane are classified in \cite{SY2}.
For the reader’s convenience, we recall the classification given in \cite[Proposition~2.4, Proposition~2.6]{SY2}.

\begin{prop}[Saeki--Yamamoto]\label{Prop5.5}
Let $N$ be a compact orientable $3$-manifold with boundary and let $F\colon N\to\mathbb{R}^2$ be a stable map. 
Then, every fiber of $F$ is equivalent to the disjoint union of one of the following fibers and finitely many copies of regular fibers ${\rm b0}^0$ of a trivial circle bundle and ${\rm b0}^1$ of a trivial $I$-bundle, where $I=[-1,1]$:
\begin{enumerate}[
  label=\normalfont(\arabic*),
  leftmargin=*,
  labelsep=.5em,
  itemsep=.3em,
  topsep=.45em,
  parsep=0pt, 
  leftmargin=2em
]
\item one of the fibers shown in~\Cref{Fig3}; 
\item a disconnected fiber ${\rm bII}^{\mu,\nu}$, defined as the disjoint union of the fibers ${\rm bI}^{\mu}$ and ${\rm bI}^{\nu}$ shown in~\Cref{Fig3}, where $\mu, \nu\in\{2,3,4,5,6,7,8\}$. 
\end{enumerate}
\end{prop}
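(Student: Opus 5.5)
This statement is a classification result cited from Saeki--Yamamoto~\cite{SY2}, so the natural plan is to recall how it is obtained rather than to reprove it from scratch; I would follow the same strategy as for \Cref{Prop5.4}. First, I would use Mather-type characterizations of stability for maps on manifolds with boundary, that is, stability of $F$ together with $F|_{\partial N}$, to list the possible local singularities. In the interior these are fold and cusp points. On the boundary we have the points of \Cref{Prop3.3} with $k=1,2$, plus the points of type $B_2$ where $S(F)$ meets $\partial N$. Multigerm transversality then says which configurations occur: over a point of codimension $\kappa$ in $\mathbb{R}^2$ the fiber contains at most $\kappa$ singular points, counting both interior singular points and singular points of $F|_{\partial N}$, and their images are in general position.

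Next, I would determine the fiber components one codimension at a time. For $\kappa=0$, a component of the fiber is a compact $1$-manifold. It is a circle if it lies in the interior, or an arc with endpoints on $\partial N$; these give ${\rm b0}^0$ and ${\rm b0}^1$. For $\kappa=1$ there is exactly one singular point. I would use the local normal forms (definite or indefinite fold, interior or boundary, and $B_2$) to describe the connected component containing it as a graph with boundary vertices. Orientability of $N$ restricts how the indefinite-fold figure-eight and arc configurations can be attached. This yields the list ${\rm bI}^{\mu}$.

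For $\kappa=2$, a component either contains two singular points, which gives the connected fibers of \Cref{Fig3}, or contains a single codimension-two point: an interior cusp, a boundary cusp, or a degenerate boundary configuration. Fibers of type (2) in the statement arise when the two singular points lie in different components. In that case the fiber is a disjoint union ${\rm bI}^{\mu}\sqcup{\rm bI}^{\nu}$.

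The main obstacle is the combinatorial enumeration for $\kappa=2$ with two indefinite singular points in one component, including the boundary cases. One must list all ways to join two local models by arcs and circles, both in the interior and ending on $\partial N$, and discard the non-orientable ones. Here I would follow Saeki's approach for the closed case: represent each fiber as a graph with prescribed vertex types (degree $4$ at indefinite folds, degree $1$ at boundary points, and so on) and enumerate the possible graphs. Finally, I would check that $C^\infty$-equivalence of fibers is determined by these combinatorial data, which follows from the relative Ehresmann fibration argument used in~\cite{Sae6, SY2}.
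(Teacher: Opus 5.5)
Your outline follows the same route by which Saeki--Yamamoto obtain this classification. The paper gives no argument of its own and only cites \cite{SY2}, Propositions~2.4 and~2.6, so your outline is fine as a recollection of that proof.

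One point needs correcting. A $B_2$ point is a point where the fold curve $S(F)$ meets $\partial N$ transversally, so these points are isolated in $N$ and map to isolated points of $\mathbb{R}^2$. Their images are the endpoints of the fold-value curve lying on $F(S(F|_{\partial N}))$. They are therefore codimension-two phenomena and belong in your $\kappa=2$ step, not the $\kappa=1$ step, where they give ${\rm bII}^{d}$, ${\rm bII}^{e}$ and ${\rm bII}^{f}$. These, together with the boundary cusp giving ${\rm bII}^{c}$, are what your vague ``degenerate boundary configuration'' should be made to mean. The $\kappa=1$ fibers contain only a single interior fold point or a single fold point of $F|_{\partial N}$.
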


\begin{figure}
    \centering
    \includegraphics[width=0.69\linewidth]{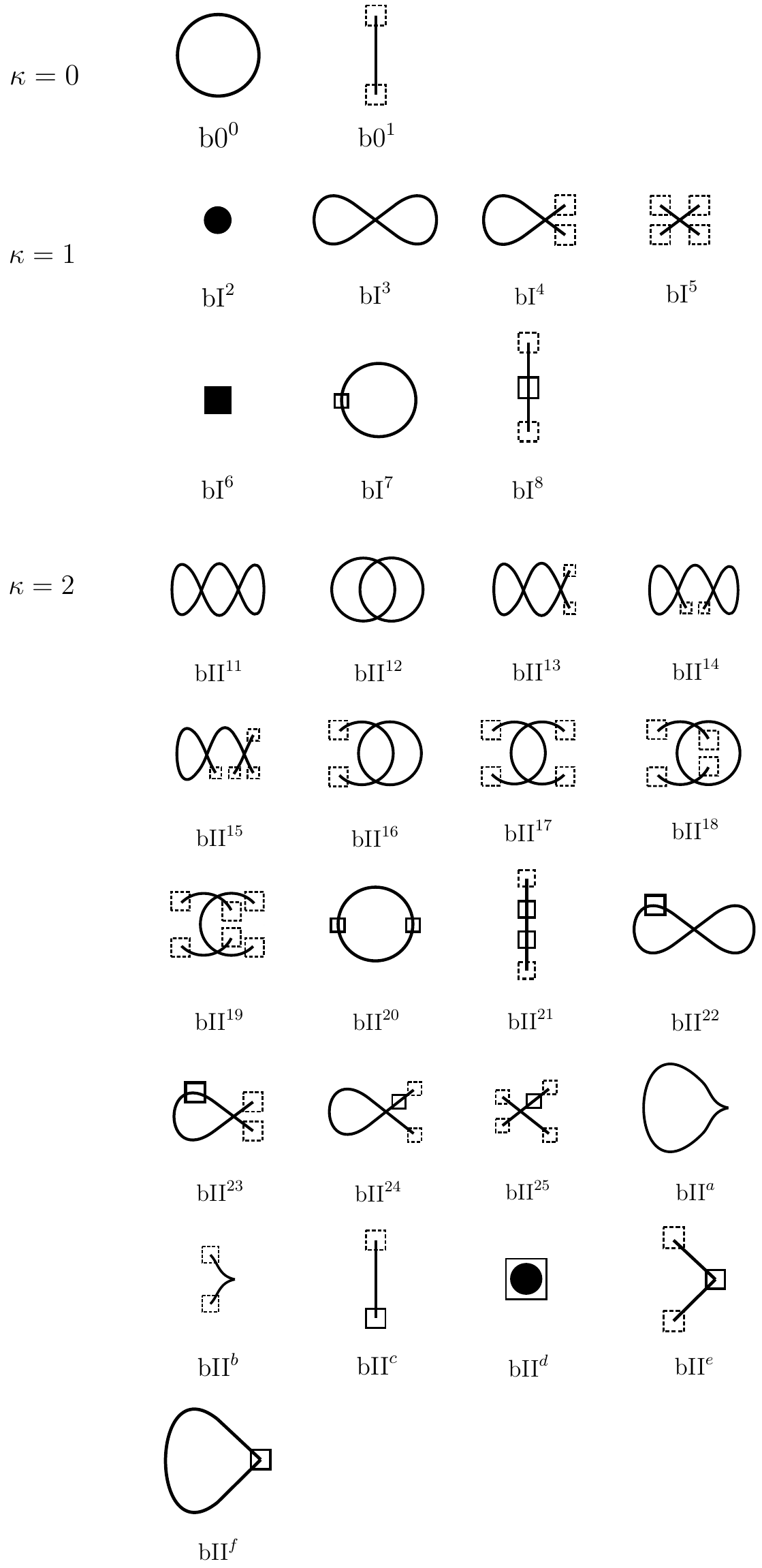}
    \caption{Fibers of stable maps of compact orientable $3$-manifolds with boundary into $\mathbb{R}^2$.}
    \label{Fig3}
\end{figure}

\begin{rem}
In this paper, we focus on a stable $\partial$-fold map from a compact orientable $3$-manifold to the plane without singular points. 
Such a map has only ${\rm b0}^0$, ${\rm b0}^1$, ${\rm bI}^6$, ${\rm bI}^7$, ${\rm bI}^8$, ${\rm bII}^{20}$, ${\rm bII}^{21}$, and ${\rm bII}^{\mu,\nu}$ for $\mu,\nu\in\{6,7,8\}$, as its fiber in~\Cref{Fig3}. 
\end{rem}

In \Cref{Fig3}, dots, crossings, and sharp points have the same meaning as above.
Squares with dashed outlines in ${\rm b0}^1$, ${\rm bI}^4$, ${\rm bI}^5$, ${\rm bI}^8$, ${\rm bII}^{13}$, ${\rm bII}^{14}$, ${\rm bII}^{15}$, ${\rm bII}^{16}$, ${\rm bII}^{17}$, ${\rm bII}^{18}$, ${\rm bII}^{19}$, ${\rm bII}^{21}$, ${\rm bII}^{23}$, ${\rm bII}^{24}$, ${\rm bII}^{25}$, ${\rm bII}^{b}$, ${\rm bII}^{c}$, and ${\rm bII}^{e}$ indicate intersections of the fibers with the boundary, while filled square in ${\rm bI}^6$ and open squares in ${\rm bI}^7$, ${\rm bI}8$, ${\rm bII}^{20}$, ${\rm bII}^{21}$, ${\rm bII}^{22}$, ${\rm bII}^{23}$, ${\rm bII}^{24}$, and ${\rm bII}^{25}$ represent boundary definite fold points and boundary indefinite fold points, respectively. 
The open square in ${\rm bII}^c$ represents a cusp point of
$F|_{\partial N}$.
The open square containing a dot in ${\rm bII}^d$ and the open
squares in ${\rm bII}^e$ and ${\rm bII}^f$ represent
$B_2$-singular points of $F$, whose types are distinguished by
the signs appearing in their local normal forms.
See \cite{SY2} for further details.

\begin{rem}
\Cref{Fig2} and \Cref{Fig3} contain the same pictures. 
However, we use different symbols to distinguish fibers of maps on closed manifolds from those of manifolds with boundary. 
Furthermore, unlike the notation used in \cite[Proposition~2.4]{SY2}, we use the symbols ${\rm bI}^{\mu}$, ${\rm bII}^{\mu\nu}$, and ${\rm bII}^{\mu,\nu}$, since we consider only orientable source manifolds. 
\end{rem}

To state our proposition, we introduce notation; see \cite{Sae6} for further details.
Let $M$ be a compact orientable $3$-manifold, possibly with boundary, and let $f\colon M\to\mathbb{R}^2$ be a stable map. 
Let $\mathcal{F}$ denote an equivalence class of fibers appearing in \Cref{Prop5.4} and \Cref{Prop5.5}. 
Suppose first that $M$ is closed.
We denote by $\mathcal{F}(f)$ the set of points $q\in\mathbb{R}^2$ such that the fiber of $f$ over $q$ is equivalent to the disjoint union of a representative of $\mathcal{F}$ and finitely many fibers of trivial circle bundles. 
If $M$ has non-empty boundary, we define $\mathcal{F}(f)$ to be the set of points $q\in\mathbb{R}^2$ such that the fiber of $f$ over $q$ is equivalent to the disjoint union of a representative of $\mathcal{F}$ and finitely many fibers of trivial circle bundles or trivial $I$-bundles. 
When $\mathcal{F}(f)$ is finite, we write $|\mathcal{F}(f)|$ for its cardinality.

Using this notation, we now state our proposition.
It concerns stable maps from compact orientable $3$-manifolds with boundary to the plane that have no singular points.
Such maps are considered in the next subsection.  
The following proposition describes how the fibers change under the doubling of maps.
Note that $D(F)$ is also stable, provided that $F$ has no singular points.
This follows from the classification of stable maps from $3$-manifolds to the plane; see, for example, \cite[Proposition~2.1]{SY1} and \cite{Lev3}.

\begin{prop}\label{Prop5.8}
Let $N$ be a compact orientable $3$-manifold with boundary, and let $F\colon N\to\mathbb{R}^2$ be a stable $\partial$-fold map such that $S(F)=\emptyset$.
Suppose that the fiber of $F$ over $q\in\mathbb{R}^2$ is equivalent to the disjoint union of the fiber depicted on the left-hand side of \Cref{Fig4} and finitely many copies of fibers of trivial circle bundles and trivial $I$-bundles. 
Then, the fiber of $D(F)$ over $q$ is equivalent to the disjoint union of the fiber depicted on the right-hand side of \Cref{Fig4} and finitely many copies of fibers of trivial circle bundles. 
Moreover, ${\rm bII}^{21}(F)={\rm II}^2(D(F))$ and ${\rm bII}^{20}(F)={\rm II}^3(D(F))$. 
\end{prop}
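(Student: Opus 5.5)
The plan is to work fiber by fiber, using the local normal forms of \Cref{Lem3.5} together with the explicit description of the doubling near $\pi(\partial N)$. Since $S(F)=\emptyset$, \Cref{Lem3.1} gives $S(D(F))=\pi(S(F|_{\partial N}))$. By \Cref{Lem3.5}(2), each boundary fold point of $F$ becomes a fold point of $D(F)$ with normal form $(x_1,\ x_2^2\pm x_3^2)$ in the case $\lambda$ absent, or $(x_1,\ \pm x_2^2+\sigma x_3^2)$, where $\sigma$ is the sign of the linear term. A boundary definite fold point therefore becomes a definite fold point of $D(F)$, and a boundary indefinite fold point becomes an indefinite fold point. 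As noted before the statement, $D(F)$ is stable, so its fibers fall under \Cref{Prop5.4}. The remaining task is to identify which fiber of that list occurs.

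For the identification I will describe the fiber of $D(F)$ over $q$ as the double of the fiber $F^{-1}(q)$ along its intersection with $\partial N$. On $\pi(N)$ away from the gluing region, $D(F)$ agrees with $F$. In the collar, the fiber of $D(F)$ over $q$ is the preimage under $(y,t)\mapsto(y,g(|t|))$ of the fiber of $F\circ\phi$. A fiber component of $F$ meeting $\partial N$ transversally (an arc in the $I$-bundle part) therefore doubles to a circle. A circle component not meeting the boundary appears twice, once in each copy.

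To make this precise, I will fix a neighborhood $U$ of $q$. The germ of $D(F)$ along its fiber is then obtained from the germ $F\colon (F^{-1}(U),F^{-1}(q))\to (U,q)$ by doubling along $F^{-1}(U)\cap\partial N$. This relies on $g$ being a diffeomorphism-up-to-reparametrization off $t=0$, and on \Cref{Lem3.5} near $t=0$.

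I then go through the left-hand side of \Cref{Fig4}, i.e.\ the fibers listed in the remark following \Cref{Prop5.5}, and compute each double:
\begin{itemize}
\item ${\rm b0}^1$ doubles to $0^0$ plus trivial circles.
\item ${\rm bI}^6$, an arc degenerating to a boundary definite fold point, doubles to a circle shrinking to a point, which is ${\rm I}^0$.
\item ${\rm bI}^7$ and ${\rm bI}^8$ double to figure-eight type fibers ${\rm I}^1$.
\item ${\rm bII}^{\mu,\nu}$ doubles to ${\rm II}^{\mu',\nu'}$ componentwise.
\item ${\rm bII}^{20}$ and ${\rm bII}^{21}$, each with two boundary indefinite fold points on one connected fiber, double to connected fibers with two indefinite fold crossings.
\end{itemize}
For the last pair, the graph-theoretic shape of each doubled fiber (a theta-like graph versus a graph with two crossings arranged differently) is read off by doubling the pictured arcs and circles through the boundary squares. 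These shapes are matched against ${\rm II}^2$ and ${\rm II}^3$ in \Cref{Fig2}, which yields ${\rm bII}^{21}\mapsto{\rm II}^2$ and ${\rm bII}^{20}\mapsto{\rm II}^3$.

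The equivalence classes of these codimension-two fibers are determined by their topology as singular 1-dimensional complexes together with the stability of $D(F)$; this is the KLP classification, under which fibers are determined by the $C^0$ type of the graph plus the singular point types. So matching the graphs suffices.

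The set equalities then follow. The correspondence of fiber types is bijective on the listed types, and $q$ is a ${\rm bII}^{21}$ point of $F$ exactly when the doubled fiber is ${\rm II}^2$, and similarly for ${\rm bII}^{20}$ and ${\rm II}^3$. No other fiber of $F$ doubles to ${\rm II}^2$ or ${\rm II}^3$, since the other doubles listed above have at most one singular point per component or are disconnected. This uses the fact that all fibers of $D(F)$ arise this way, because $D(F)$ is $F$ on two copies.

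The main obstacle is the careful doubling of the pictures for ${\rm bII}^{20}$ and ${\rm bII}^{21}$: one must track how the arc endpoints on $\partial N$ connect through the boundary indefinite fold points so that the correct closed graph is obtained. I would first verify this locally using the normal form $(x_1,x_2^2-x_3^2)$ at each crossing, then globally by following the arcs in the figures.
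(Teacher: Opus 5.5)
Your proposal is correct and has the same skeleton as the paper's proof. The fiber of $D(F)$ over $q$ is two copies of $F^{-1}(q)$ glued at their points on $\partial N$. \Cref{Lem3.5} turns boundary definite (resp.\ indefinite) fold points into definite (resp.\ indefinite) fold points. One then runs through the short list of fibers of a stable $\partial$-fold map with $S(F)=\emptyset$, using that $D(F)$ is stable on the orientable $D(N)$ so that \Cref{Prop5.4} applies.

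\textbf{Where the routes differ.} The only difference is at the decisive step for ${\rm bII}^{20}$ and ${\rm bII}^{21}$.
\begin{itemize}
\item The paper knows the doubled fiber is ${\rm II}^2$ or ${\rm II}^3$. It separates the two by the numbers of components of nearby regular fibers, after enumerating matchings of branches and discarding those incompatible with the orientability of $D(N)$.
\item You separate them by the homeomorphism type of the doubled fiber itself. This is more direct, since the double is canonical and there is nothing to enumerate.
\end{itemize}
Concretely, ${\rm bII}^{20}$ is a circle touching $\partial N$ at two points. It doubles to two circles meeting transversally in two points, i.e.\ four edges joining two vertices. ${\rm bII}^{21}$ is an arc with ends on $\partial N$ touching $\partial N$ at two interior points. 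It doubles to a chain of a loop, a bigon and a loop. These are the graphs of ${\rm II}^3$ and ${\rm II}^2$, respectively. Neither is a theta graph, so your parenthetical description should be made precise.

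\textbf{The justification you need.} The cost of your route is the justification of the criterion. It is not a general feature of the classification that fibers are determined by the $C^0$ type of the graph plus singular point types. For non-orientable sources, inequivalent fibers can share a graph, differing in how the branches are arranged at a crossing. What you actually use is:
\begin{itemize}
\item $D(N)$ is orientable;
\item hence every connected fiber of $D(F)$ with exactly two indefinite fold points is ${\rm II}^2$ or ${\rm II}^3$;
\item these two types have non-homeomorphic underlying graphs.
\end{itemize}
You should state this explicitly. The paper's nearby-fiber criterion, by contrast, reads off the neighborhood structure directly.

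\textbf{A minor inaccuracy.} The correspondence of fiber types is not bijective: ${\rm bI}^7$ and ${\rm bI}^8$ both give ${\rm I}^1$, and ${\rm b0}^0$ and ${\rm b0}^1$ both give copies of ${\rm 0}^0$. You only need, and you do argue separately, that ${\rm bII}^{21}$ and ${\rm bII}^{20}$ are the only types that double to ${\rm II}^2$ and ${\rm II}^3$.
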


\begin{figure}
    \centering
    \includegraphics[width=1\linewidth]{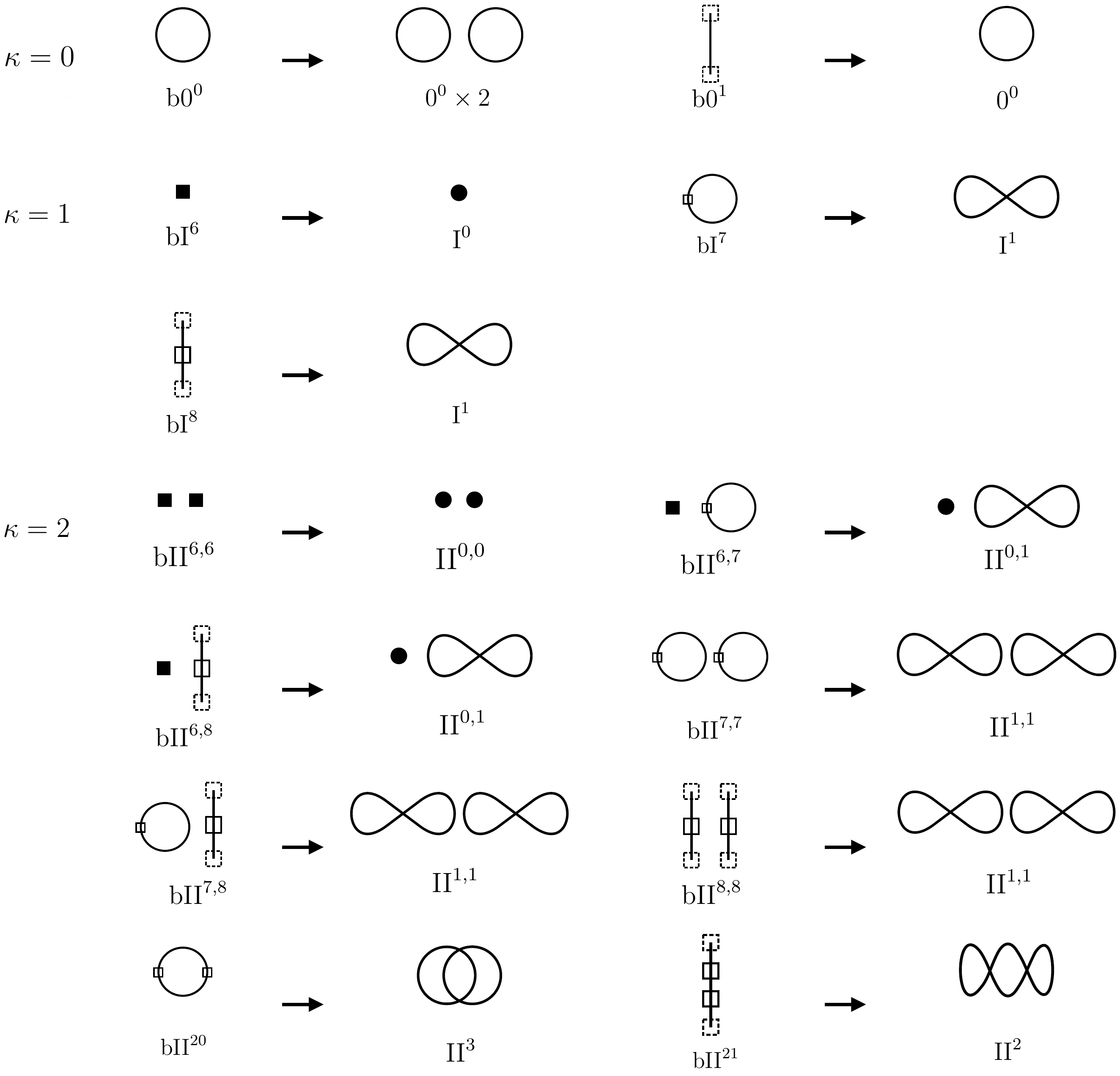}
    \caption{The changes of fibers by doubling.}
    \label{Fig4}
\end{figure}

\begin{proof}
We first prove the first assertion of the proposition.
By the definition of $D(F)$, the fiber of $D(F)$ over $q\in\mathbb{R}^2$ is obtained from two copies of the fiber of $F$ over $q$ by identifying the corresponding points lying in $\partial N$. 
Recall that $D(F)$ agrees with $F$ outside a collar neighborhood of $\partial N$.
Consequently, the portions of the fibers disjoint from $\partial N$ remain unchanged under doubling.
In \Cref{Fig3}, only the filled, unfilled squares, and dotted squares represent points at which the fiber intersects $\partial N$. 
The corresponding local changes under doubling are described in \Cref{Lem3.5}.
More precisely, identifying the two filled squares produces a single black dot, since a boundary definite fold point of $F|_{\partial N}$ gives rise to a definite fold point of $D(F)$.
Identifying the two unfilled squares produces a crossing, since a boundary indefinite fold point of $F|_{\partial N}$ gives an indefinite fold point of $D(F)$.
By contrast, identifying the two dotted squares produces no singular points, since a regular point of $F|_{\partial N}$ gives rise to a regular point of $D(F)$. 
These local changes are illustrated in~\Cref{Fig5}.

\begin{figure}
    \centering
    \includegraphics[width=0.6\linewidth]{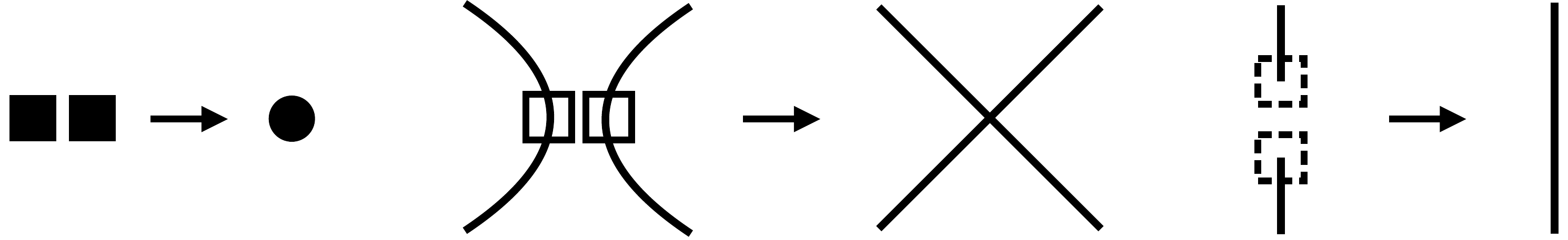}
    \caption{The local changes of fibers near the boundary.}
    \label{Fig5}
\end{figure}

In what follows, we show that each fiber depicted on the right-hand side of~\Cref{Fig4} is obtained from the corresponding fiber on the left-hand side by gluing described above.

We first consider the case $\kappa=0$. 
It follows immediately that two copies of ${\rm 0}^0$ are obtained from two copies of ${\rm b0}^0$ by gluing.
Moreover, since there is only one possible way to glue two copies of ${\rm b0}^1$, the resulting fiber is ${\rm 0}^0$.

We next consider the case $\kappa=1$. 
By \Cref{Lem3.5}, gluing two copies of ${\rm bI}^6$ produces ${\rm I}^0$. 
Moreover, since two boundary indefinite fold points give rise to a single indefinite fold point under gluing, as shown in \Cref{Lem3.5}, gluing two copies of ${\rm bI}^7$ produces a single fiber ${\rm I}^1$.
Here, the resulting one is determined by the orientability of $D(N)$.
By a similar argument, gluing two copies of ${\rm bI}^8$ also produces a single fiber ${\rm I}^1$.

\begin{figure}
    \centering
    \begin{subfigure}[b]{0.4\linewidth}
        \centering
        \includegraphics[width=\linewidth]{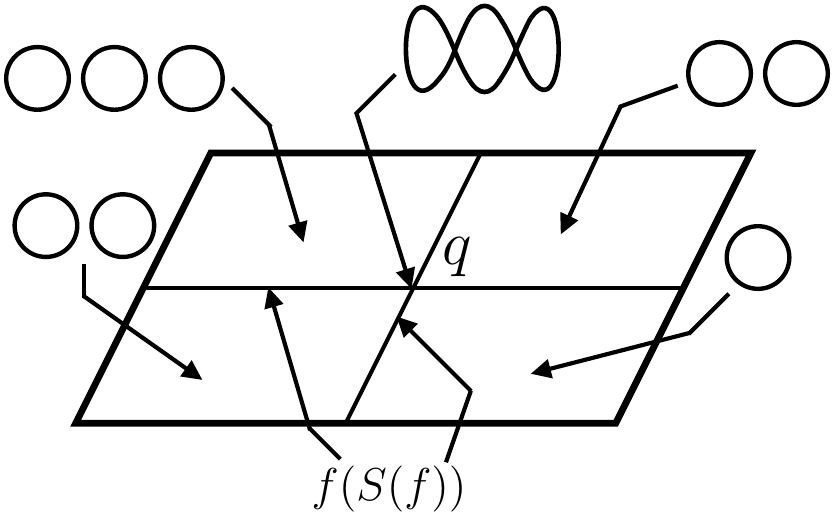}
        \caption{Type ${\rm II}^2$.}
        \label{Fig6(1)}
    \end{subfigure}%
    \hspace{0.05\linewidth}
    \begin{subfigure}[b]{0.4\linewidth}
        \centering
        \includegraphics[width=\linewidth]{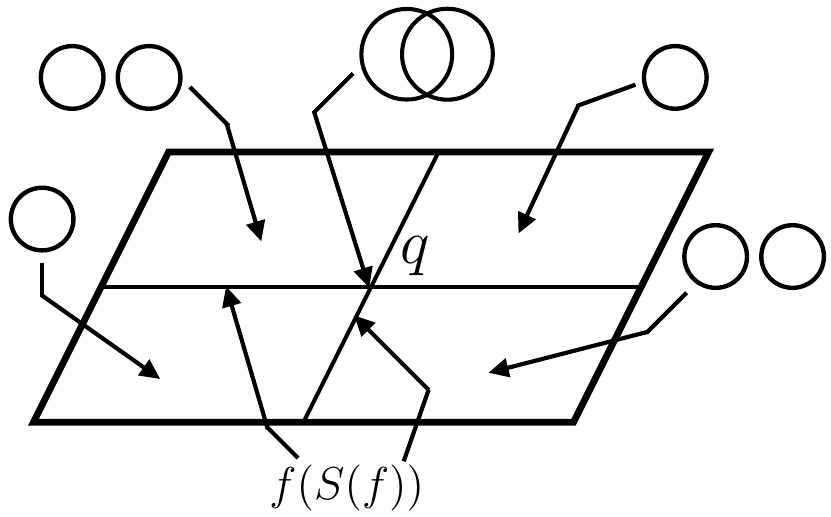}
        \caption{Type ${\rm II}^3$.}
        \label{Fig6(2)}
    \end{subfigure}
    \caption{The fibers around the singular fiber ${\rm II}^2$ and ${\rm II}^3$.}
    \label{Fig6}
\end{figure}

We finally consider the case $\kappa=2$. 
For fibers ${\rm bI}^{\mu,\nu}$, where $\mu, \nu\in\{6,7,8\}$, the assertion follows immediately from the case $\kappa=1$. 
It remains to consider the fibers ${\rm bII}^{20}$ and ${\rm bII}^{21}$.
We begin with an observation.
Since the fibers obtained by gluing two copies of ${\rm bII}^{20}$ or ${\rm bII}^{21}$ are connected and $D(F)$ is a stable fold map, it must be ${\rm II}^2$ or ${\rm II}^3$.
The two fibers are distinguished by the behavior of the nearby fibers around the corresponding double point; see~\Cref{Fig6}, where $q$ is a singular value of a stable map of closed oriented $3$-manifold into $\mathbb{R}^2$.
In particular, they can be distinguished by comparing the numbers of connected components derived from the singular fibers.
We first identify the corresponding pairs of dotted squares in the two copies of ${\rm bII}^{20}$, respectively ${\rm bII}^{21}$, and then consider the possible identifications of the remaining branches containing the indefinite fold points.
Once the corresponding dotted squares are fixed, this verification is purely combinatorial. 
Although there are many such identifications, each case can be decided by the same criterion: we first determine whether the identification is compatible with the orientability of $D(N)$, and, if it is admissible, we compare the nearby regular fibers
around the resulting singular fiber.
As explained above, the fibers ${\rm II}^2$ and ${\rm II}^3$ are distinguished by the numbers of connected components of these nearby fibers.
\Cref{Fig7} illustrates a representative admissible case and an inadmissible case for ${\rm bII}^{20}$.
Applying the same criterion to all remaining identifications shows that every admissible identification of two copies of ${\rm bII}^{20}$ produces a fiber of type ${\rm II}^3$.
Similarly, every admissible identification of two copies of ${\rm bII}^{21}$ produces a fiber of type ${\rm II}^2$.
This proves the first assertion of the proposition.
The second one follows immediately from the first.
\end{proof}

\begin{figure}
    \centering
    \begin{subfigure}[b]{0.19\linewidth}
        \centering
        \includegraphics[width=\linewidth]{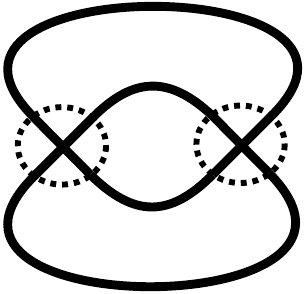}
        \caption{Admissible.}
        \label{Fig7(1)}
    \end{subfigure}
    \hspace{30pt}
    \begin{subfigure}[b]{0.19\linewidth}
        \centering
        \includegraphics[width=\linewidth]{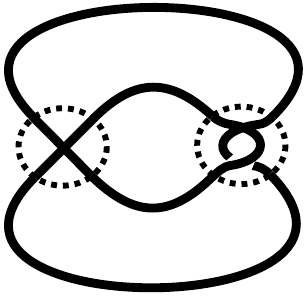}
        \caption{Inadmissible.}
        \label{Fig7(2)}
    \end{subfigure}
    \caption{Two possible cases from ${\rm bII}^{20}$, where the dotted circles represent the variable parts.}
    \label{Fig7}
\end{figure}

\subsection{Singular fibers and simplicial volumes of $3$-manifolds with boundary}
In this subsection, by combining the results of Ishikawa--Koda~\cite{IK} with \Cref{Prop5.8}, we establish an inequality relating the singular fibers of stable maps from compact oriented $3$-manifolds with boundary to the plane to the simplicial volumes of their source manifolds.

We first recall the definition of simplicial volumes.
For more details, see~\cite{Gro}.

\begin{defi}
Let $M$ be a compact oriented $m$-dimensional manifold possibly with boundary, and let $[M,\partial M]\in H_m(M,\partial M;\mathbb{R})$ denote the fundamental class.
Then, 
$$
\lVert M,\partial M\rVert
=
\inf\Big\{
\sum_{r=1}^{s}|a_r|
\mid
[M,\partial M]
\text{ is represented by }
\sum_{r=1}^{s}a_r\sigma_r\in C_{m}(M,\partial M)
\Big\}
$$
is called the \emph{simplicial volume} of $M$.
\end{defi}

We next introduce the result by Ishikawa--Koda~\cite{IK}.
To introduce it, we recall the \emph{stable map complexity} of closed oriented $3$-manifold $M$ as follows:
$$
{\rm smc}(M)=
\min_{f}
\{
|{\rm II}^2(f)|+2|{\rm II}^3(f)|
\},
$$
where $f\colon M\to\mathbb{R}^2$ is any stable fold map.
The theorem by Ishikawa--Koda~\cite{IK} gives the relation between the stable map complexity of $M$ and the simplicial volume of $M$.

\begin{thm}[Ishikawa--Koda]
Let $M$ be a closed oriented $3$-manifold. 
Then,
$$
\lVert M\rVert V_{\text{tet}}
\leq
2 {\rm smc}(M)V_{\text{oct}}, 
$$
where $\lVert M\rVert=\lVert M,\emptyset\rVert$, and $V_{\text{tet}}=1.01…$ and $V_{\text{oct}}=3.66…$ denote the volumes of the ideal regular tetrahedron and the ideal regular octahedron, respectively. 
\end{thm}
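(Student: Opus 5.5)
The plan is to pass from a stable fold map to its Stein factorization, read off a decomposition of $M$ along tori from it, and then bound the simplicial volume of each piece. Fix a stable fold map $f\colon M\to\mathbb{R}^2$ attaining ${\rm smc}(M)$, so that $|{\rm II}^2(f)|+2|{\rm II}^3(f)|={\rm smc}(M)$. Let $W_f$ be its Stein factorization, i.e.\ the quotient of $M$ obtained by collapsing each connected component of each fiber to a point, with quotient map $q_f\colon M\to W_f$. Since $f$ is stable and has only fold singularities, $W_f$ is a compact $2$-dimensional polyhedron whose local models are determined by the fibers in \Cref{Prop5.4}:
\begin{itemize}
\item regular fibers give open $2$-cells;
\item definite folds give boundary edges;
\item indefinite folds (${\rm I}^1$) give triple edges;
\item the codimension-two fibers ${\rm II}^2$ and ${\rm II}^3$ give the true vertices of $W_f$.
\end{itemize}
Fibers of type ${\rm II}^{\mu,\nu}$ give only transverse crossings in the image and produce no vertex of $W_f$.

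Next I would decompose $M$ along tori using $q_f$. Let $B$ be the union of small regular neighborhoods in $W_f$ of the vertices. Then $q_f^{-1}(B)$ is a union of compact $3$-manifolds with toroidal boundary, and the closure $M'$ of its complement fibers over a polyhedron with no vertices. Over the open $2$-cells and along the edges the map is a circle bundle, possibly with singular fibers of type ${\rm I}^0$ or ${\rm I}^1$. Hence $M'$ is a union of Seifert pieces and pieces built from pairs of pants, that is, a graph manifold. By Gromov's additivity of simplicial volume under gluing along incompressible tori, and its subadditivity when the tori are compressible, together with the vanishing of the simplicial volume for graph manifolds, we get
\[
\lVert M\rVert\leq\sum_{v}\lVert q_f^{-1}(B_v),\partial\rVert ,
\]
where the sum runs over the vertices $v$ of $W_f$ and $B_v$ is the neighborhood of $v$ in $B$.

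It then remains to bound each vertex block. I would identify the blocks explicitly from the fiber pictures in \Cref{Fig2}. Over a ${\rm II}^2$ vertex the preimage should be the exterior of a link in $S^3$, or a Dehn filling of such an exterior, whose hyperbolic volume is at most $2V_{\text{oct}}$; a natural candidate is the minimally twisted chain link arising in Costantino--D.~Thurston's shadow-complexity bounds. Since the simplicial volume does not increase under Dehn filling, each such block then contributes at most $2V_{\text{oct}}/V_{\text{tet}}$, using the Gromov--Thurston relation $\lVert X\rVert=\mathrm{vol}(X)/V_{\text{tet}}$ for hyperbolic $X$. For a ${\rm II}^3$ vertex I would show that its preimage can be cut along further tori into two blocks of the same kind as the ${\rm II}^2$ block, plus graph-manifold pieces; this accounts for the weight $2$ on $|{\rm II}^3(f)|$. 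Summing over all vertices gives
\[
\lVert M\rVert V_{\text{tet}}\leq 2\bigl(|{\rm II}^2(f)|+2|{\rm II}^3(f)|\bigr)V_{\text{oct}}=2\,{\rm smc}(M)\,V_{\text{oct}} .
\]

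The main obstacle is this local identification of the vertex blocks. One must determine precisely which $3$-manifolds with toroidal boundary sit over neighborhoods of ${\rm II}^2$ and ${\rm II}^3$ vertices, and check that their volumes, or those of manifolds they Dehn-fill from, are bounded by $2V_{\text{oct}}$ and by $4V_{\text{oct}}$ respectively. To do this I would first try to realize $W_f$ near each vertex as a piece of a shadow of $M$, and then invoke the Costantino--Thurston computation of the complement of the shadow's vertex region, so that the problem reduces to their known volume estimate.
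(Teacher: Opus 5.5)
There is a genuine gap, already at the decomposition step. The preimage $q_f^{-1}(B_v)$ of a small cone neighborhood of a vertex does not have toroidal boundary. It retracts onto the connected singular fiber over $v$, which is a graph with two $4$-valent vertices and Euler characteristic $-2$, so $q_f^{-1}(B_v)$ is a genus-$3$ handlebody. Equivalently, the link $\partial B_v$ is a trivalent graph with four vertices, and its preimage is four pairs of pants glued along annuli, a closed surface of Euler characteristic $-4$.

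So you are cutting $M$ along genus-$3$ surfaces. Gromov's gluing inequality needs amenable gluing surfaces and does not apply. $M'$ is not a graph manifold, and the vertex blocks are handlebodies rather than link exteriors or their Dehn fillings. The decomposition that works is Costantino--Thurston's. Let $M_S$ be the preimage of a regular neighborhood of the whole singular set of $W_f$, i.e. the vertices together with the triple edges. Its boundary consists of tori over the circles where this neighborhood meets the regions. Moreover, $M\setminus\Int M_S$ consists of circle bundles over surfaces and solid tori over the definite-fold edges, so $\lVert M\rVert\le\lVert M_S,\partial M_S\rVert$.

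Even then you cannot sum simplicial volumes block by block. The vertex pieces of $M_S$ meet along thrice-punctured spheres (preimages of the $Y$-shaped cross-sections of triple edges), which are not amenable. What one proves instead is that each block over a simple vertex is hyperbolic with totally geodesic thrice-punctured-sphere boundary, built from two regular ideal octahedra. Hence $M_S$ is a cusped hyperbolic manifold of volume $2V_{\text{oct}}$ per vertex, and $\lVert X\rVert=\mathrm{vol}(X)/V_{\text{tet}}$ is applied to $M_S$ as a whole.

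Second, the per-vertex bound and the weight $2$ on ${\rm II}^3$ are the whole content of the theorem, and your proposal only conjectures them. The mechanism you suggest for ${\rm II}^3$, cutting along further tori into two ${\rm II}^2$-type blocks, is not how it works. A ${\rm II}^2$ vertex of $W_f$ is a simple-polyhedron vertex: the numbers of nearby circle components in the four quadrants are $3,2,1,2$, and the link is the $1$-skeleton of a tetrahedron. A ${\rm II}^3$ vertex is not: the counts are $2,1,2,1$, and the link is a $4$-cycle with two doubled edges. So $W_f$ is not a simple polyhedron, and Costantino--Thurston's estimate cannot be quoted directly.

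The paper does not prove this theorem; it cites Ishikawa--Koda. They turn $W_f$ into a (branched) shadow of a $4$-manifold bounded by $M$, in which each ${\rm II}^2$ vertex stays one vertex and each ${\rm II}^3$ vertex is locally resolved into two. Thus the shadow complexity of $M$ is at most ${\rm smc}(M)$. Costantino--Thurston's bound $\lVert M\rVert V_{\text{tet}}\le 2V_{\text{oct}}\cdot(\text{number of shadow vertices})$ then gives the inequality. Your closing sentence points in this direction, but the proposal supplies neither the local modification at ${\rm II}^3$ vertices nor the hyperbolic structure on $M_S$, which are the two nontrivial steps.
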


By analogy with the stable map complexity above, we define \emph{$\partial$-stable map complexity} for compact connected oriented $3$-manifolds with boundary as follows:
$$
{\rm smc}^{\partial}(N)
=
\min_{F}\{
|{\rm bII}^{21}(F)|+2|{\rm bII}^{20}(F)|
\}
$$
where $F\colon N\to\mathbb{R}^2$ is any stable $\partial$-fold map with $S(F)=\emptyset$.

\begin{rem}\label{Rem5.11}
Any compact connected orientable $3$-manifold with boundary can be immersed into $\mathbb{R}^3$ by Whitehead~\cite{Whi}.
Shibata~\cite{Shi} indicates that the composition of this immersion and the projection $\pi\colon\mathbb{R}^3\to\mathbb{R}^2$ produces a stable map $F\colon N\to\mathbb{R}^2$ with $S(F)=\emptyset$.
Moreover, since $\partial N$ is orientable, the cusps of $F|_{\partial N}$ can be eliminated up to homotopy by Yamamoto~\cite{Yam} and Levine~\cite{Lev2}, while preserving the map as a submersion.
Therefore, for any compact connected orientable $3$-manifold with boundary, $\partial$-stable map complexity is defined as a finite non-negative integer.
\end{rem}

With these preparations, we obtain the following theorem.

\begin{thm}\label{Thm5.12}
Let $N$ be a compact connected oriented $3$-manifold with toroidal boundary.
Then, 
$$
\lVert N,\partial N\rVert V_{\text{tet}}\leq {\rm smc}^{\partial}(N)V_{\text{oct}}, 
$$
where $V_{\text{tet}}$ and $V_{\text{oct}}$ are as above. 
\end{thm}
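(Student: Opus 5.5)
The plan is to double and then apply the Ishikawa--Koda inequality. Fix a stable $\partial$-fold map $F\colon N\to\mathbb{R}^2$ with $S(F)=\emptyset$ realizing ${\rm smc}^{\partial}(N)$; it exists by \Cref{Rem5.11}. We pass to the double $D(F)\colon D(N)\to\mathbb{R}^2$, where $D(N)=N\cup_{\id}\overline{N}$ is a closed oriented $3$-manifold.

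First we check that $D(F)$ is an admissible competitor for ${\rm smc}(D(N))$. By \Cref{Lem3.5}, $D(F)$ is a Morin map. Since $S(F)=\emptyset$ and $F|_{\partial N}$ has only fold points, \Cref{Lem3.1} shows that $S(D(F))=\pi(S(F|_{\partial N}))$ consists only of fold points, so $D(F)$ is a fold map. As remarked before \Cref{Prop5.8}, $D(F)$ is stable. Hence
$$
{\rm smc}(D(N))\leq |{\rm II}^2(D(F))|+2|{\rm II}^3(D(F))|.
$$
By \Cref{Prop5.8}, ${\rm II}^2(D(F))={\rm bII}^{21}(F)$ and ${\rm II}^3(D(F))={\rm bII}^{20}(F)$, so the right-hand side equals ${\rm smc}^{\partial}(N)$. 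Applying the Ishikawa--Koda theorem to $D(N)$ then gives
$$
\lVert D(N)\rVert V_{\text{tet}}\leq 2\,{\rm smc}^{\partial}(N)V_{\text{oct}}.
$$

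It remains to prove $\lVert D(N)\rVert = 2\lVert N,\partial N\rVert$, or at least $\lVert D(N)\rVert\geq 2\lVert N,\partial N\rVert$; dividing by $2$ then yields the claim. This is where the hypothesis of toroidal boundary is used. Gromov's gluing theorem (additivity of simplicial volume when gluing along boundary components with amenable fundamental group, see \cite{Gro} and its later expositions) applies because the boundary tori have abelian fundamental group. It gives $\lVert D(N)\rVert=\lVert N,\partial N\rVert+\lVert \overline{N},\partial \overline{N}\rVert=2\lVert N,\partial N\rVert$, and orientation reversal does not change simplicial volume.

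The main obstacle is this additivity step: it is the only place the toroidal boundary assumption enters, and it rests on an external deep result rather than on the paper's machinery. The rest is bookkeeping, apart from making sure that \Cref{Prop5.8} is applied to a minimizing map and that connectedness of $D(N)$ and the orientation conventions match the hypotheses of the Ishikawa--Koda theorem.
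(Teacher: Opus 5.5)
Your proposal is correct and follows the paper's proof: double a minimizing map, use \Cref{Prop5.8} to get ${\rm smc}(D(N))\leq{\rm smc}^{\partial}(N)$, apply Ishikawa--Koda, and divide by $2$ using $\lVert D(N)\rVert=2\lVert N,\partial N\rVert$ (the paper cites \cite{Kue} for this equality, where you cite Gromov's gluing theorem). One nuance: for gluing along amenable boundary components the general theorem gives only $\lVert D(N)\rVert\leq 2\lVert N,\partial N\rVert$ unless the gluing satisfies a compatibility condition, but that condition holds automatically for the identity gluing in a double, so the direction $\lVert D(N)\rVert\geq 2\lVert N,\partial N\rVert$ that you need is indeed available.
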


\begin{proof}
By Ishikawa--Koda~\cite{IK}, we have the inequality 
$$
\lVert D(N)\rVert V_{\text{tet}}\leq 2{\rm smc}(D(N))V_{\text{oct}}.
$$

We first consider the term on the right-hand side.
Let $G\colon N\to\mathbb{R}^2$ be any stable $\partial$-fold map satisfying $S(G)=\emptyset$.
By the characterization of stable maps from closed orientable $3$-manifolds, $D(G)$ is also a stable fold map.
Therefore, by the definition of stable map complexity, we have
$$
{\rm smc}(D(N))
\leq 
\min_{G}\{|{\rm II}^2(D(G))|+2|{\rm II}^3(D(G))|\},
$$
where the minimum is taken over all stable $\partial$-fold maps $G\colon N\to\mathbb{R}^2$ with $S(G)=\emptyset$. 
Furthermore, by \Cref{Prop5.8}, ${\rm bII}^{20}(G)={\rm II}^{3}(D(G))$ and ${\rm bII}^{21}(G)={\rm II}^{2}(D(G))$ hold. 
Therefore, we obtain
$$
\min_{G}\{|{\rm II}^{2}(D(G))|+2|{\rm II}^{3}(D(G))|\}
=
\min_{G}\{|{\rm bII}^{21}(G)|+2|{\rm bII}^{20}(G)|\}, 
$$ 
and the right-hand side is $\mathrm{smc}^{\partial}(N)$.

Furthermore, since $\partial N$ is a disjoint union of tori, it follows from \cite{Kue} that $\lVert D(N) \rVert =2\lVert N,\partial N\rVert $. 
Combining these equalities and inequalities, we obtain the desired one. 
\end{proof}

\begin{rem}
In this section, we introduce ${\rm smc}^{\partial}(N)$ as the complexity of a compact connected oriented $3$-manifold with boundary.
One may also define a similar complexity by allowing maps with singular points.
However, it is currently unclear whether such a complexity can be used to sharpen the inequality above. 
\end{rem}

The following examples illustrate the lower bounds on singular fibers obtained from \Cref{Thm5.12}.

\begin{example}
Let $N$ be the exterior of the knot $4_1$, which is also called a figure-eight knot.
It is known that ${\rm vol}(N)= 2.02\dots$; see, for example, \cite{AHW}. 
Therefore, by \Cref{Thm5.12} and \Cref{Thm5.17} described later, every stable $\partial$-fold map $F\colon N\to\mathbb{R}^2$ satisfying $S(F)=\emptyset$ must satisfy
$$
|{\rm bII}^{21}(F)|+2|{\rm bII}^{20}(F)|\geq 1.
$$
This implies that $F$ must have at least one singular fiber of ${\rm bII}^{20}$ or ${\rm bII}^{21}$.
\end{example}

\begin{example}
Let $N$ be the exterior of the knot $6_2$.
It is known that ${\rm vol}(N)=4.40\dots$; see, for example, \cite{AHW}.
Therefore, by \Cref{Thm5.12} and \Cref{Thm5.17} described later, every stable $\partial$-fold map $F\colon N\to\mathbb{R}^2$ satisfying $S(F)=\emptyset$ must satisfy 
$$
|{\rm bII}^{21}(F)|+2|{\rm bII}^{20}(F)|\geq 2.
$$
This implies that $F$ must have either at least one ${\rm bII}^{20}$ or at least two ${\rm bII}^{21}$.
\end{example}

\subsection{Simple stable map and graph manifolds}
In this subsection, using \Cref{Thm5.12}, we study simple stable maps from compact orientable $3$-manifolds with boundary to the plane with certain conditions.
More precisely, we show that if such a map has no singular points in the interior, then its source manifold is a graph manifold.

We begin by introducing the notion of a simple map.

\begin{defi}
Let $N$ be an $n$-dimensional manifold with boundary, and let $L$ be an $l$-dimensional manifold, where $n>l$. 
A submersion $F\colon N\to L$ is said to be \emph{simple} if, for every singular value $q$ of $F|_{\partial N}$, the fiber $F^{-1}(q)$ contains at most one singular point of $F|_{\partial N}$. 
\end{defi}

We now recall some basic facts about $3$-manifolds.
Every compact irreducible orientable $3$-manifold $N$ whose boundary is toroidal admits a decomposition called the JSJ decomposition.
More precisely, by cutting the manifold along a finite collection of pairwise disjoint tori with certain conditions, it can be decomposed into pieces that are either Seifert fibered or hyperbolic.
The hyperbolic volume ${\rm vol}(N)$ of $N$ is defined to be the sum of the hyperbolic volumes of all the hyperbolic pieces appearing in its JSJ decomposition. 
The following theorem follows from a result of Soma~\cite{Som}; see \cite[Theorem~1]{FLMQ} for a statement.

\begin{thm}[Soma]\label{Thm5.17}
Let $N$ be a compact irreducible oriented $3$-manifold with toroidal boundary.
Then, 
$$
\lVert N,\partial N\rVert =\frac{{\rm vol}(N)}{V_{\text{tet}}}.
$$
\end{thm}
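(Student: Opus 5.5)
This statement is not proved in the paper: it is quoted from Soma, as reformulated in \cite[Theorem~1]{FLMQ}, and is used as a black box. If I had to prove it, I would reduce it to the JSJ pieces and combine three ingredients: additivity of the simplicial volume under gluing along tori, vanishing on Seifert fibered pieces, and Gromov--Thurston proportionality on hyperbolic pieces.

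\emph{Setup and degenerate case.} Let $N=N_1\cup\cdots\cup N_r$ be the JSJ decomposition along pairwise disjoint incompressible tori $T_1,\dots,T_s$, so each $N_i$ is Seifert fibered or has hyperbolic interior of finite volume. If $\partial N$ is compressible, irreducibility forces $N$ to be a solid torus. Then both sides vanish: the hyperbolic volume is $0$, and $\lVert N,\partial N\rVert=0$ because the solid torus admits self-maps of degree $d>1$ (rel boundary up to covering). So assume the boundary tori are incompressible.

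\emph{Step 1: additivity.} I would show
$$
\lVert N,\partial N\rVert=\sum_i\lVert N_i,\partial N_i\rVert.
$$
The inequality ``$\le$'' is elementary. Take near-optimal relative cycles on the pieces and match their boundaries on each $T_j$. The boundary chains can be corrected at arbitrarily small $\ell^1$-cost, because $\lVert T^2\rVert=0$ and the relevant boundary cycles admit efficient fillings on tori. The inequality ``$\ge$'' is the hard direction. I would prove it with Gromov's gluing theorem via bounded cohomology: $\pi_1(T^2)=\mathbb{Z}^2$ is amenable, so bounded cohomology of the cut manifold relative to the tori is isometric to that of the pieces. Alternatively I would cite Kuessner's version, already used above in the proof of \Cref{Thm5.12} for the equality $\lVert D(N)\rVert=2\lVert N,\partial N\rVert$. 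The incompressibility of the tori (guaranteed by the JSJ theory) is what makes the amenable-gluing argument apply.

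\emph{Step 2: Seifert pieces.} For a Seifert fibered piece $N_i$, I would pass to a finite cover that is a circle bundle over a surface. Simplicial volume is multiplicative under finite covers, so it suffices to treat the circle bundle. There, fiberwise $d$-fold coverings, or more robustly Gromov's vanishing theorem for manifolds admitting an amenable cover of multiplicity at most $\dim$, give $\lVert N_i,\partial N_i\rVert=0$.

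\emph{Step 3: hyperbolic pieces.} For a hyperbolic piece I would prove
$$
\lVert N_i,\partial N_i\rVert=\frac{\mathrm{vol}(N_i)}{V_{\text{tet}}}.
$$
The bound $\ge$ comes from straightening. Every singular simplex in a relative fundamental cycle of the truncated compact core is straightened, and each straight simplex has volume less than $V_{\text{tet}}$ (Haagerup--Munkholm), while the cycle must cover total volume $\mathrm{vol}(N_i)$ up to an error near the cusps. Controlling that cusp error needs the amenability of the cusp groups once more. The bound $\le$ comes from Thurston's smearing construction: average nearly regular ideal-like simplices over the isometry group, truncate in the cusps, and repair the boundary cheaply on the tori.

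\emph{Conclusion and main obstacle.} Summing the three steps, the Seifert pieces contribute $0$ and the hyperbolic pieces contribute $\mathrm{vol}(N_i)/V_{\text{tet}}$, which gives the formula for $\lVert N,\partial N\rVert$. I expect the main obstacle to be the ``$\ge$'' direction of additivity in Step 1, together with the cusp estimates in Step 3. Both rest on the vanishing of bounded cohomology of the amenable groups $\mathbb{Z}^2$, and that is where I would spend the effort.
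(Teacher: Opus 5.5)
The paper does not prove this statement. As you note, it is quoted from Soma (in the form of \cite[Theorem~1]{FLMQ}) and used as a black box in the examples after \Cref{Thm5.12} and in \Cref{Cor5.18}. Your outline is the standard route to Soma's theorem: additivity over the JSJ tori, vanishing on Seifert pieces, and Gromov--Thurston proportionality on the cusped hyperbolic pieces. The Seifert/hyperbolic dichotomy comes from hyperbolization, which is already built into the paper's definition of ${\rm vol}(N)$. Steps~2 and~3 are acceptable as sketches of known results, delicate ones in Step~3, which you would cite rather than reprove.

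\emph{The gap.} The step that fails as written is the half of Step~1 you call elementary, $\lVert N,\partial N\rVert\le\sum_i\lVert N_i,\partial N_i\rVert$.
\begin{itemize}
\item A near-optimal relative fundamental cycle $c_i$ of $(N_i,\partial N_i)$ gives no control on its boundary beyond $\lVert\partial c_i\rVert\le4\lVert c_i\rVert$.
\item To glue $c_i$ and $c_{i'}$ across a JSJ torus $T_j$, you must add a $3$-chain $b$ whose boundary cancels the null-homologous $2$-cycle $z$ on $T_j$ formed by their two boundary pieces.
\item Since $\lVert\partial b\rVert\le4\lVert b\rVert$, every such $b$ has $\lVert b\rVert\ge\frac14\lVert z\rVert$. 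This can be of the order of $\lVert c_i\rVert$.
\end{itemize}
The vanishing $\lVert T^2\rVert=0$ and the uniform boundary condition on $T^2$ make the patch cheap only once you know that $\lVert\partial c_i\rVert$ can be taken small. That is exactly Gromov's equivalence theorem: if every boundary component has amenable fundamental group, then for each $\varepsilon>0$ there is a relative fundamental cycle $c$ with $\lVert c\rVert\le\lVert N_i,\partial N_i\rVert+\varepsilon$ and $\lVert\partial c\rVert\le\varepsilon$. Its proof (e.g.\ by Bucher, Burger, Frigerio, Iozzi, Pagliantini and Pozzetti) goes through bounded cohomology.

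So amenability of $\mathbb{Z}^2$ is essential in both directions of Step~1. Incompressibility of the JSJ tori is the additional input needed only for ``$\ge$''. The fix is to cite the full gluing theorem (Gromov; Kuessner; Bucher et al.) for the equality, rather than splitting it into an elementary half and a hard half.

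In symmetric situations such as the double in the proof of \Cref{Thm5.12}, the inequality ``$\le$'' is indeed elementary, because a relative cycle and its mirror copy have exactly cancelling boundaries. Across a JSJ torus no such matching is available in general.
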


Following \cite{Neu}, we call a compact oriented $ 3$-manifold with toroidal boundary a \emph{graph manifold} if its JSJ decomposition contains no hyperbolic pieces.
As an application of \Cref{Thm5.12}, we obtain the following corollary.

\begin{cor}\label{Cor5.18}
Let $N$ be a compact connected irreducible oriented $3$-manifold with toroidal boundary.
If $N$ admits a simple stable $\partial$-fold map into $\mathbb{R}^2$ without singular points, then $N$ is a graph manifold. 
\end{cor}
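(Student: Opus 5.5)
The plan is to show that a simple stable $\partial$-fold map $F\colon N\to\mathbb{R}^2$ with $S(F)=\emptyset$ has no singular fibers of types ${\rm bII}^{20}$ or ${\rm bII}^{21}$. That forces ${\rm smc}^{\partial}(N)=0$, so \Cref{Thm5.12} gives $\lVert N,\partial N\rVert=0$. \Cref{Thm5.17} then gives ${\rm vol}(N)=0$.

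First, by the remark following \Cref{Prop5.5}, the only fibers of $F$ are ${\rm b0}^0$, ${\rm b0}^1$, ${\rm bI}^6$, ${\rm bI}^7$, ${\rm bI}^8$, ${\rm bII}^{20}$, ${\rm bII}^{21}$, and ${\rm bII}^{\mu,\nu}$ with $\mu,\nu\in\{6,7,8\}$. The codimension-two fibers ${\rm bII}^{20}$ and ${\rm bII}^{21}$ are connected. From the description of \Cref{Fig3}, each contains two boundary indefinite fold points (two open squares) in a single fiber. This is exactly the situation the simplicity condition forbids: the fiber over the singular value $q$ of $F|_{\partial N}$ would contain two singular points of $F|_{\partial N}$. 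Hence $|{\rm bII}^{20}(F)|=|{\rm bII}^{21}(F)|=0$.

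The next step uses that ${\rm smc}^{\partial}(N)$ is a minimum over all stable $\partial$-fold maps with $S(F)=\emptyset$. Since $F$ is such a map, ${\rm smc}^{\partial}(N)=0$, and \Cref{Thm5.12} yields $\lVert N,\partial N\rVert V_{\text{tet}}\le 0$, so $\lVert N,\partial N\rVert=0$. As $N$ is irreducible, oriented, connected, with toroidal boundary, \Cref{Thm5.17} gives ${\rm vol}(N)=0$. Hyperbolic pieces of the JSJ decomposition have strictly positive volume, so the sum of their volumes vanishes only if there are none. Thus the JSJ decomposition has no hyperbolic pieces, and $N$ is a graph manifold by definition.

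The main obstacle is the first step: checking from the pictures in \Cref{Fig3} (following \cite{SY2}) that both ${\rm bII}^{20}$ and ${\rm bII}^{21}$ really carry two singular points of $F|_{\partial N}$ over the same value, rather than, for instance, one boundary fold point and a crossing. Two further facts must be confirmed from the classification in \cite{SY2}. The disconnected fibers ${\rm bII}^{\mu,\nu}$ are also excluded by simplicity (they contain two boundary fold points), but they contribute nothing to ${\rm smc}^{\partial}$ anyway. And no other codimension-two fiber can occur once $S(F)=\emptyset$ and $F|_{\partial N}$ has no cusps, which holds because $F$ is a $\partial$-fold map.
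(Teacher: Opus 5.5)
Your proposal is correct and follows the paper's proof exactly. Simplicity rules out the ${\rm bII}^{20}$ and ${\rm bII}^{21}$ fibers, so ${\rm smc}^{\partial}(N)=0$; then \Cref{Thm5.12}, \Cref{Thm5.17}, and the positivity of hyperbolic volume show that $N$ is a graph manifold. The paper only asserts $|{\rm bII}^{20}(F)|=|{\rm bII}^{21}(F)|=0$, and your justification supplies the right reason: each of these connected fibers contains two boundary indefinite fold points over a single value, which the simplicity condition forbids.
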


\begin{proof}
Let $F\colon N\to\mathbb{R}^2$ be a simple stable $\partial$-fold map with $S(F)=\emptyset$. 
Then, we have $|{\rm bII}^{20}(F)|=|{\rm bII}^{21}(F)|=0$.
It follows from \Cref{Thm5.12} that $\lVert N,\partial N\rVert=0$. 
By \Cref{Thm5.17}, we further obtain ${\rm vol}(N)=0$. 
Therefore, the JSJ decomposition of $N$ contains no hyperbolic pieces, and hence $N$ is a graph manifold. 
\end{proof}

\section{Results for existence and non-existence of $\partial$-fold maps}
In this section, we give results for existence and non-existence of $\partial$-fold maps on manifolds with boundary. 
We further apply these results to $\partial$-fold maps from punctured complex projective spaces to Euclidean spaces.

\subsection{An existence result and its application}
In this subsection, we present an existence result for $\partial$-fold maps. 
Then, we apply this result to determine pairs $(m,l)$ for which $\mathbb{C}P^{m}\setminus\Int D^{2m}$ admits a $\partial$-fold map into $\mathbb{R}^l$. 
The following proposition is obtained by removing a sufficiently small open ball centered at a regular point of a fold map on a closed manifold.

\begin{prop}
Let $M$ be a closed $m$-dimensional manifold, and let $L$ be an $l$-dimensional manifold, where $m>l$. 
If $M$ admits a fold map into $L$, then $M\setminus\Int D^m$ admits a $\partial$-fold map into $L$.
\end{prop}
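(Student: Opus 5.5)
Let $f\colon M\to L$ be a fold map. The plan is to remove a small ball $B$ centered at a regular point $p$ of $f$, chosen so that the restriction of $f$ to the sphere $\partial B$ is itself a fold map, and to check that $F=f|_{M\setminus\Int B}$ is a $\partial$-fold map.

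First, pick $p\in M\setminus S(f)$. This set is nonempty because $S(f)$ is a union of $(l-1)$-dimensional submanifolds and $l-1<m$. By the submersion theorem there are local coordinates $(x_1,\dots,x_m)$ around $p$, with $p$ at the origin, and coordinates $(y_1,\dots,y_l)$ around $f(p)$, in which
$$
f(x_1,\dots,x_m)=(x_1,\dots,x_l).
$$
Since $S(f)$ is closed and $p\notin S(f)$, we may shrink the chart so that it misses $S(f)$. Inside it, set
$$
B=\{x\mid x_1^2+\dots+x_m^2\le r^2\}
$$
for small $r>0$, and let $F=f|_{M\setminus\Int B}$. Then $N=M\setminus\Int B$ is a compact manifold with boundary $\partial N=\partial B\cong S^{m-1}$, and $N$ is diffeomorphic to $M\setminus\Int D^m$.

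Second, I check the required properties of $F$.
\begin{enumerate}
\item The interior singular points are unchanged: $S(F)=S(f)$. All of them are fold points, and $S(F)\cap\partial N=\emptyset$ because $\partial N$ lies in the chart, which avoids $S(f)$.
\item On the boundary, $F|_{\partial N}$ is the restriction of the linear projection $\mathbb{R}^m\to\mathbb{R}^l$ to the round sphere of radius $r$. Here $m-1\ge l$, since $m>l$.
\item This projection restricted to $S^{m-1}$ is a fold map. Its singular set is the equator
$$
\{x_{l+1}=\dots=x_m=0\}\cong S^{l-1}.
$$
Near a point of the equator, use $(x_1,\dots,x_{l-1})$ (after a rotation in the first $l$ coordinates) together with $x_{l+1},\dots,x_m$ as coordinates on the sphere. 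Then the last component is
$$
x_l=\sqrt{r^2-\textstyle\sum_{i\ne l}x_i^2},
$$
which, after the coordinate change $x_i\mapsto x_i$ for $i<l$, is a definite fold in the remaining variables. Equivalently, this is the standard fact that a round sphere projected to a subspace has only definite fold singularities.
\end{enumerate}
Hence every singular point of $F|_{\partial N}$ is a fold point, and $F$ is a $\partial$-fold map.

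The main obstacle, modest as it is, is making the fold check in item 3 rigorous. I would handle it with the rotation invariance of the round sphere under $O(l)\times O(m-l)$. This reduces the check to a single point, say
$$
(0,\dots,0,r,0,\dots,0)
$$
with $r$ in the $l$-th slot. At that point one applies Definition~\ref{Def2.5} directly to the Taylor expansion
$$
\sqrt{r^2-s^2}=r-\frac{s^2}{2r}+O(s^4),
$$
where $s^2=\sum_{i\ne l}x_i^2$, followed by the Morse lemma with parameters.
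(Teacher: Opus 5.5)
Your proof is correct and takes the same approach as the paper, which justifies the proposition only by the one-line remark that one removes a sufficiently small open ball centred at a regular point of the fold map. Your argument supplies the details the paper omits, in particular that the boundary sphere carries the restriction of a linear projection whose only singularities are definite folds along the equator $S^{l-1}$; that fold check can be shortened by using the target coordinate $Y=r^2-\sum_{i\le l}y_i^2$ near $(0,\dots,0,r)$, on which the last component becomes exactly $\sum_{i>l}x_i^2$.
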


We next give the following proposition concerning the existence of fold maps from $\mathbb{C}P^{m}$ to Euclidean spaces. 
The proof of the proposition is given in the Appendix.

\begin{prop}
Consider the pairs $(m,l)$ satisfying $1\leq m,l\leq 6$ and $2m>l$ listed in \Cref{Fig8}. 
A blue circle indicates that $\mathbb{C}P^{m}$ admits a fold map into $\mathbb{R}^l$.
A red cross indicates that no such fold map exists.
A green triangle indicates that, to the best of the author’s knowledge, the existence of such a fold map remains unknown.
\end{prop}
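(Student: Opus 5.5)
The plan is to treat the three kinds of entries in \Cref{Fig8} separately. Existence (blue circles) and non-existence (red crosses) each need an argument. Green triangles need only the remark that neither argument settles them. The main tool throughout is Ando's homotopy principle for fold maps into Euclidean spaces (cf.~\cite{And}). In the range $n=2m>l$, it reduces the existence of a fold map $\mathbb{C}P^m\to\mathbb{R}^l$ to a bundle-theoretic problem on $\mathbb{C}P^m$. Roughly, one needs a fiberwise epimorphism $T\mathbb{C}P^m\oplus\varepsilon^1\to\varepsilon^l$ whose kernel bundle $\eta$, of rank $2m-l+1$, satisfies a condition near a codimension-$(2m-l+1)$ singular locus. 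I would first restate precisely the form of Ando's criterion I use, together with its consequence on characteristic classes: stably $T\mathbb{C}P^m\oplus\varepsilon\cong\eta\oplus\varepsilon^l$ away from a controlled set. The key input is the total Pontryagin class $p(\mathbb{C}P^m)=(1+x^2)^{m+1}$ and total Stiefel--Whitney class $w(\mathbb{C}P^m)=(1+x)^{m+1}$, with $x$ the generator of $H^2$.

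For the blue circles I would proceed by cases on $l$. The case $l=1$ is a Morse function. For the remaining circles I would give explicit constructions wherever possible and otherwise verify Ando's bundle condition directly. The constructions I have in mind are the following.
\begin{itemize}
\item Compositions of a fold map $\mathbb{C}P^m\to\mathbb{R}^{l'}$ with a generic projection.
\item The moment-map-type maps $[z_0:\dots:z_m]\mapsto(|z_1|^2,\dots)/|z|^2$, perturbed and composed with linear maps.
\item Known special generic or fold constructions from the literature, for instance for $\mathbb{C}P^3\to\mathbb{R}^l$ with $l\leq 5$.
\end{itemize}
Where no construction is at hand, the plan is to build the epimorphism skeleton by skeleton on the CW structure with cells in even dimensions $0,2,\dots,2m$. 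The obstructions lie in $\pi_{2i-1}$ of Stiefel manifolds $V_{2m+1,l}$. For small $l$ these groups vanish, or the obstructions are killed by the Pontryagin computation.

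For the red crosses at $(m,l)=(5,4),(6,4),(5,6),(6,6)$, I would argue by contradiction. Suppose a fold map exists. The stable splitting $T\mathbb{C}P^m\oplus\varepsilon\cong\eta\oplus\varepsilon^l$, which holds off the singular set and extends in a controlled way, forces the dual Pontryagin classes $\bar p_i$ to vanish in degrees above the rank allowed by $\eta$. Here $\bar p_i$ are the components of $(1+x^2)^{-(m+1)}$. Equivalently, they must be compatible with the singular locus via the Thom polynomial of $A_1$ singularities. I would compute $(1+x^2)^{-(m+1)}$ for $m=5,6$, and show that some $\bar p_i$ with $4i>2m-l+1$ is nonzero in a degree where it must vanish, for example $\bar p_2$ or $\bar p_3$. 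Where the integral classes alone do not give a contradiction, I would add the mod~$2$ classes $\bar w_i=(1+x)^{-(m+1)}$, or a Fukuda-type signature or Euler characteristic congruence in the spirit of \Cref{Thm4.1}.

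The main obstacle I expect is the non-existence step. The difficulty is to pin down a version of the characteristic class obstruction that genuinely holds for fold maps with a nonempty singular set, rather than only for submersions. Singularities can absorb part of the obstruction, which is exactly why some entries remain triangles. I would first try Ando's characterization combined with the Thom polynomial of $\overline{A_1}$, and check the four crossed cases by direct polynomial computation in $\mathbb{Z}[x]/(x^{m+1})$.
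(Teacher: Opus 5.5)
You are right that Ando's h-principle and splittings of $T\mathbb{C}P^m\oplus\varepsilon$ are the tool. The paper uses them, through \cite[Corollary~2.4]{SSS} and Steer's stable spans, for the crosses with $l\in\{4,6\}$ and for several circles. But the proposal has three genuine gaps.

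\textbf{Missing crosses.} The crosses in \Cref{Fig8} are at $(m,l)=(2,2),(4,2),(6,2),(2,3),(4,4),(5,4),(6,4),(4,5),(6,5),(4,6),(5,6),(6,6)$, and you treat only four of them. The remaining even-$l$ crosses yield to a corrected version of your argument (see below). The odd-$l$ crosses $(2,3)$, $(4,5)$, $(6,5)$ cannot come from any obstruction to a splitting $T\mathbb{C}P^m\oplus\varepsilon\cong\eta\oplus\varepsilon^l$. For odd $l$ such a splitting is not necessary for a fold map: $\mathbb{C}P^4$ and $\mathbb{C}P^6$ have $\mathrm{span}^0=0$, yet both admit fold maps into $\mathbb{R}^3$ (circles in the table). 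The paper uses different theorems there. For $\mathbb{C}P^2\to\mathbb{R}^3$ it cites Saeki \cite{Sae1}. For $l=5$ it cites Kikuchi--Saeki \cite{KiS}: a fold map into $\mathbb{R}^5$ forces the source to have even Euler characteristic, whereas $\chi(\mathbb{C}P^m)=m+1$ is odd for $m=4,6$. Your Fukuda-type fallback does not give this. The mod~$2$ congruence $\chi(M)\equiv\chi(S(f))$ still allows a closed $4$-manifold $S(f)$ with odd Euler characteristic. Indeed the analogous parity claim is false for $l=3$, as the circle at $(4,3)$ shows.

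\textbf{The characteristic-class test is unsound as stated.} A splitting $T\mathbb{C}P^m\oplus\varepsilon\cong\eta\oplus\varepsilon^l$ with $\rank\eta=2m-l+1$ constrains the tangential classes $w(\mathbb{C}P^m)=(1+x)^{m+1}$ and $p(\mathbb{C}P^m)=(1+x^2)^{m+1}$. The constraints are $w_i=0$ for $i>\rank\eta$ and $p_i=0$ for $2i>\rank\eta$. The splitting says nothing about the dual classes, and the range $4i>2m-l+1$ is wrong.
\begin{itemize}
\item Your rule would ``prove'' that $\mathbb{C}P^3$ has no fold map into $\mathbb{R}^4$: there $2m-l+1=3<4$ and $\bar p_1(\mathbb{C}P^3)=-4x^2\neq0$. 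Yet $(3,4)$ is a circle.
\item For $(5,4)$ the dual classes see nothing, since $\bar w(\mathbb{C}P^5)=1+x^2$.
\end{itemize}
With the correct classes the even-$l$ crosses are immediate. For $m$ even, $w_{2m}(\mathbb{C}P^m)=(m+1)x^m\neq0$ while $\rank\eta<2m$; this settles $(2,2),(4,2),(6,2),(4,4),(6,4),(4,6),(6,6)$. For $m=5$, $w_8(\mathbb{C}P^5)=15x^4\neq0$ with $\rank\eta=7$ or $5$; this settles $(5,4),(5,6)$.

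The step you flag as the main obstacle is exactly where the parity of $l$ enters. Along $S(f)$, the second intrinsic derivative is a nondegenerate symmetric form on the kernel bundle $K$ of rank $2m-l+1$, with values in the cokernel line bundle $Q$. Hence $Q^{\otimes(2m-l+1)}\cong(\det K)^{\otimes2}$ is trivial. For even $l$ the exponent is odd, so $Q$ is trivial. Then $df$ plus a lift of a nonvanishing section of $Q$ is a \emph{global} epimorphism $T\mathbb{C}P^m\oplus\varepsilon\to\varepsilon^l$. This is the converse half of \cite[Corollary~2.4]{SSS}. So the singular set absorbs nothing when $l$ is even, and for odd $l$ the argument genuinely fails.

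\textbf{Existence side.} Composing with generic projections creates cusps, and the moment map has rank-zero critical points at the fixed points, so neither yields fold maps. Your cell-by-cell obstruction theory amounts to computing $\mathrm{span}^0(\mathbb{C}P^m)$, which is Steer's table $2,0,4,0,2,0$. Combined with \cite[Corollary~2.4]{SSS}, it gives $(3,2),(5,2),(3,3),(3,4),(3,5),(5,3)$. It cannot give $(4,3)$ or $(6,3)$: there $\mathrm{span}^0=0$, so no epimorphism $T\mathbb{C}P^m\oplus\varepsilon\to\varepsilon^3$ exists. For these two cases the paper relies on specific results of \cite{SSS} on fold maps into $\mathbb{R}^3$.
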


\begin{figure}[t]
\centering
\begin{tikzpicture}[scale=0.85]

\draw[step=1, gray!35, thin] (0.5,0.5) grid (6.5,6.5);

\draw[->, thick] (0.5,0.5) -- (6.7,0.5)
  node[right] {$\ell$};
\draw[->, thick] (0.5,0.5) -- (0.5,6.7)
  node[above] {$m$};

\foreach \x in {1,...,6}
  \node[below] at (\x,0.5) {\(\x\)};

\foreach \y in {1,...,6}
  \node[left] at (0.5,\y) {\(\y\)};

%l=1
\draw[blue!70!black, line width=1pt] (1,1) circle[radius=0.25];
\draw[blue!70!black, line width=1pt] (1,2) circle[radius=0.25];
\draw[blue!70!black, line width=1pt] (1,3) circle[radius=0.25];
\draw[blue!70!black, line width=1pt] (1,4) circle[radius=0.25];
\draw[blue!70!black, line width=1pt] (1,5) circle[radius=0.25];
\draw[blue!70!black, line width=1pt] (1,6) circle[radius=0.25];

%l=2
\draw[gray!100, line width=1.5pt] (1.82,1) -- (2.18,1);
\node[red!75!black]   at (2,2) {\Large$\times$};
\draw[blue!70!black, line width=1pt] (2,3) circle[radius=0.25];
\node[red!75!black]   at (2,4) {\Large$\times$};
\draw[blue!70!black, line width=1pt] (2,5) circle[radius=0.25];
\node[red!75!black]   at (2,6) {\Large$\times$};

%l=3
\draw[gray!100, line width=1.5pt] (2.82,1) -- (3.18,1);
\node[red!75!black]   at (3,2) {\Large$\times$};
\draw[blue!70!black, line width=1pt] (3,3) circle[radius=0.25];
\draw[blue!70!black, line width=1pt] (3,4) circle[radius=0.25];
\draw[blue!70!black, line width=1pt] (3,5) circle[radius=0.25];
\draw[blue!70!black, line width=1pt] (3,6) circle[radius=0.25];

%l=4
\draw[gray!100, line width=1.5pt] (3.82,1) -- (4.18,1);
\draw[gray!100, line width=1.5pt] (3.82,2) -- (4.18,2);
\draw[blue!70!black, line width=1pt] (4,3) circle[radius=0.25];
\node[red!75!black]   at (4,4) {\Large$\times$};
\node[red!75!black]   at (4,5) {\Large$\times$};
\node[red!75!black]   at (4,6) {\Large$\times$};

%l=5
\draw[gray!100, line width=1.5pt] (4.82,1) -- (5.18,1);
\draw[gray!100, line width=1.5pt] (4.82,2) -- (5.18,2);
\draw[blue!70!black, line width=1pt] (5,3) circle[radius=0.25];
\node[red!75!black]   at (5,4) {\Large$\times$};
\node[green!50!black] at (5,5) {\Large$\triangle$};
\node[red!75!black]   at (5,6) {\Large$\times$};

%l=6
\draw[gray!100, line width=1.5pt] (5.82,1) -- (6.18,1);
\draw[gray!100, line width=1.5pt] (5.82,2) -- (6.18,2);
\draw[gray!100, line width=1.5pt] (5.82,3) -- (6.18,3);
\node[red!75!black]   at (6,4) {\Large$\times$};
\node[red!75!black]   at (6,5) {\Large$\times$};
\node[red!75!black]   at (6,6) {\Large$\times$};

\end{tikzpicture}

\caption{
For fold maps from $\mathbb{C}P^{m}$ to $\mathbb{R}^l$. 
A circle, a cross, and a triangle indicate existence,
non-existence, and an open case, respectively.
A thick dash indicates a case not considered in this paper.
}
\label{Fig8}
\end{figure}

The following corollary follows immediately from the preceding propositions.

\begin{cor}\label{Cor6.3}
For every pair $(m,l)$ with a blue circle in \Cref{Fig8}, $\mathbb{C}P^{m}\setminus\Int D^{2m}$ admits a $\partial$-fold map into $\mathbb{R}^l$. 
\end{cor}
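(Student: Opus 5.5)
The plan is to combine the two propositions stated just above. Fix a pair $(m,l)$ marked with a blue circle in \Cref{Fig8}. By the preceding proposition on fold maps from complex projective spaces, there is a fold map $f\colon \mathbb{C}P^m\to\mathbb{R}^l$. Since every such pair satisfies $2m>l$, the first proposition of this subsection applies with $M=\mathbb{C}P^m$ and $L=\mathbb{R}^l$. It yields a $\partial$-fold map $\mathbb{C}P^m\setminus\Int D^{2m}\to\mathbb{R}^l$, which is the statement.

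For completeness, I would record why the puncturing proposition holds, since this is the only place where any work happens. Choose a regular point $p$ of $f$. Take submersion charts around $p$ and $f(p)$ in which $f$ is the linear projection $\mathbb{R}^{2m}\to\mathbb{R}^l$ onto the first $l$ coordinates. Remove a small round open ball $\Int D^{2m}$ centered at $p$ in this chart. Its closure must be disjoint from $S(f)$; this is possible because $S(f)$ is closed.

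Then $F=f|_{\mathbb{C}P^m\setminus\Int D^{2m}}$ has singular point set $S(f)$, which lies in the interior and consists of fold points. It remains to check the restriction of $F$ to the boundary sphere. Restricting the linear projection to a round sphere $S^{2m-1}\subset\mathbb{R}^{2m}$ gives a map to $\mathbb{R}^l$. Its singular set is the equatorial sphere $\{x_1=\dots=x_l=0\}\cap S^{2m-1}$, and every point there is a definite fold point, because the remaining coordinates near such a point are given by a nondegenerate quadratic expression. This is the standard special generic map of a sphere. Hence $F|_{\partial N}$ has only fold points, so $F$ is a $\partial$-fold map.

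The main obstacle lies entirely in the existence of the fold maps $\mathbb{C}P^m\to\mathbb{R}^l$ recorded in \Cref{Fig8}, which the paper defers to the Appendix. Taking that proposition as given, the corollary is immediate.
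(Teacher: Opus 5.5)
Your argument is the same as the paper's: the corollary is just the puncturing proposition (remove a small open ball around a regular point of a fold map) applied to the fold maps $\mathbb{C}P^m\to\mathbb{R}^l$ recorded in \Cref{Fig8}.

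There is one slip in your optional verification. The singular set of the linear projection restricted to the round sphere is $\{x_{l+1}=\dots=x_{2m}=0\}\cap S^{2m-1}\cong S^{l-1}$, not $\{x_1=\dots=x_l=0\}\cap S^{2m-1}$. Points of the latter set are regular, since the tangent space there contains $\mathbb{R}^l\times\{0\}$. Your conclusion that all boundary singular points are definite folds is nevertheless correct.
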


\subsection{A non-existence result and its application}

In this subsection, we present a non-existence result for $\partial$-fold maps obtained by gluing maps. 
Then, we apply this result to determine pairs $(m,l)$ for which there exists no $\partial$-fold map from $\mathbb{C}P^{m}\setminus\Int D^{2m}$ to $\mathbb{R}^l$. 
The following proposition is an immediate consequence of \Cref{Lem3.5}, applied to $\partial$-fold maps.

\begin{prop}
Let $N$ be a compact oriented $n$-dimensional manifold with boundary, and let $L$ be an $l$-dimensional manifold, where $n>l$. 
If $N$ admits a $\partial$-fold map into $L$, then $N\cup_{{\rm id}}\overline{N}$ admits a fold map into $L$.
\end{prop}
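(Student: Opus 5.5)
The plan is to apply the gluing construction of Section~3 with $N_1=N$, $N_2=\overline{N}$, and $F_1=F_2=F$, where $F\colon N\to L$ is the given $\partial$-fold map; in other words, we take the double $D(F)\colon N\cup_{\id}\overline{N}\to L$. First, we observe that the hypotheses of Section~3 are satisfied. Since $S(F)\cap\partial N=\emptyset$ and $S(F)$ is closed in the compact manifold $N$, there is a collar $\phi\colon\partial N\times[0,1)\to N$ on which $F$ is a submersion. Using this same collar for both copies, we get $F_1\circ\phi_1=F_2\circ\phi_2$, and this common map is a submersion. Reversing the orientation of the second copy has no effect on the smooth construction. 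It only guarantees, by the remark following \Cref{Lem3.1}, that $N\cup_{\id}\overline{N}$ is an oriented closed manifold.

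Next, we identify the singular point set. \Cref{Lem3.1} gives
\[
S(D(F))=\pi(S(F))\sqcup\pi(S(F))\sqcup\pi(S(F|_{\partial N})),
\]
where the first two pieces come from the two copies of $F$. By \Cref{Lem3.5}(1), each point of $\pi(A_1(F))$ in either copy is a fold point of $D(F)$ with the same normal form. By \Cref{Prop3.3} and \Cref{Lem3.5}(2) applied with $k=1$, each point of $\pi(A_1(F|_{\partial N}))$ is a Morin point of type $A_1$, with normal form $-\sum x_r^2+\sum x_r^2+\sigma x_n^2$ in the variables complementary to $x_1,\dots,x_{l-1}$. Because $F$ is a $\partial$-fold map, $A_k(F)$ and $A_k(F|_{\partial N})$ are empty for $k\geq 2$. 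Hence every singular point of $D(F)$ is a fold point, and $D(F)$ is a fold map into $L$.

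The one point needing care is that \Cref{Lem3.5} is stated for maps glued along the same collar, and the smoothing function $g$ behaves as $t^2$ near $0$; this is exactly what turns a regular point of $F$ that is a fold of $F|_{\partial N}$ into a nondegenerate quadratic direction $x_n^2$. Since \Cref{Lem3.5} already handles this, the argument amounts to checking hypotheses, and I expect no real obstacle.
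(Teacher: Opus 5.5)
Your proposal is correct and is essentially the paper's argument. The paper simply calls the statement an immediate consequence of \Cref{Lem3.5} applied to the double $D(F)$ of a $\partial$-fold map. Your check that $F$ is a submersion on a collar, and your use of \Cref{Lem3.1} to rule out any other singular points, just spell out that one-line proof.
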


We further give the following proposition concerning the existence and non-existence of fold maps from $\mathbb{C}P^{m}\#\overline{\mathbb{C}P^{m}}$ to $\mathbb{R}^l$. 
The proof of the proposition is given in the Appendix.

\begin{prop}
Consider the pairs $(m,l)$ satisfying $1\leq m,l\leq 6$ and $2m>l$ listed in \Cref{Fig9}. 
A blue circle indicates that $\mathbb{C}P^{m}\#\overline{\mathbb{C}P^{m}}$ admits a fold map into $\mathbb{R}^l$, whereas a red cross indicates that no such fold map exists.
A green triangle indicates that, to the best of the author’s knowledge, the existence of such a fold map remains unknown. 
\end{prop}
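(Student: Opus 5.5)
The plan is to treat the pairs $(m,l)$ in \Cref{Fig9} case by case. Existence will come mainly from the gluing construction of Section~3, and non-existence from characteristic-class obstructions to fold maps. Throughout, write $M_m=\mathbb{C}P^m\#\overline{\mathbb{C}P^m}$ and recall the basic data: $\chi(M_m)=2m$, $\sigma(M_m)=0$, and $M_m$ is the double $D(\mathbb{C}P^m\setminus\Int D^{2m})$, i.e.\ it equals $N\cup_{\id}\overline{N}$ for $N=\mathbb{C}P^m\setminus\Int D^{2m}$.

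\emph{Existence.} There are three sources.
\begin{itemize}
\item For $l=1$, any Morse function on $M_m$ is a fold map.
\item For every pair with a circle in \Cref{Fig8}, \Cref{Cor6.3} gives a $\partial$-fold map $N\to\mathbb{R}^l$. The preceding proposition, i.e.\ \Cref{Lem3.5} applied to the double, then produces a fold map $M_m=N\cup_{\id}\overline{N}\to\mathbb{R}^l$.
\item For $l=2$ the picture should differ from \Cref{Fig8}. By Levine's cusp elimination theorem, a closed orientable manifold admits a fold map to $\mathbb{R}^2$ if and only if $\chi$ is even. Since $\chi(M_m)=2m$ is always even, every $m$ gives a circle at $l=2$, whereas $\chi(\mathbb{C}P^m)=m+1$ explains the crosses in \Cref{Fig8} for even $m$.
\end{itemize}
For the remaining larger $l$, I would try Eliashberg's $h$-principle for fold maps. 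It reduces existence to a bundle-theoretic condition: a splitting of $TM_m\oplus\varepsilon$ relative to a hypersurface, or, more simply, sufficient for our purposes, stable parallelizability of suitable pieces. I would verify this using the connected-sum decomposition and the cell structure of $\mathbb{C}P^m$.

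\emph{Non-existence.} The natural obstruction is the Thom polynomial for the non-fold singularities. If $M^n\to\mathbb{R}^l$ is a fold map with $n\geq l$, then the dual Stiefel--Whitney classes satisfy $\bar w_i(M)=0$ for $i>n-l+1$, and the dual Pontryagin classes vanish in the analogous range, following Saeki--Sakuma and Ando.

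For $M_m$ the cohomology ring is generated by $a,b$ in degree $2$ with $ab=0$ and $a^m=-b^m$. Since $TM_m$ restricts to the tangent bundles of the two punctured pieces, the total classes are computed from $(1+a)^{m+1}$ and $(1+b)^{m+1}$, and for Pontryagin classes from $(1+a^2)^{m+1}$ and $(1+b^2)^{m+1}$. One then reads off whether the lowest nonvanishing $\bar w_{2j}$ or $\bar p_j$ lies in the forbidden range.

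For the cases with $l$ odd and $2m$ even, I would additionally use the Fukuda/Saeki congruence from \Cref{Thm4.1} and \Cref{Thm4.5} in the closed case, namely $\chi(M_m)\equiv\chi(S(f))\pmod 2$, together with Sakuma-type relations when $(2m,l)=(4,3)$. The vanishing signature makes these mostly non-obstructive here.

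\emph{Main obstacle.} I expect the hardest part to be the boundary cases where the Stiefel--Whitney obstruction vanishes mod $2$ but existence is not supplied by \Cref{Fig8}. This happens notably when $l$ is close to $2m$ and $m\geq 4$. There I would first try rational Pontryagin obstructions: $\bar p_j\neq 0$ in degree $4j>2(2m-l+1)$ forbids fold maps. Only if these vanish would I attempt an explicit construction via Eliashberg's theorem. Any pair for which neither method decides the question would be recorded as a triangle, as in the statement.
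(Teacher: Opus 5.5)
\textbf{Gap: the non-existence half tests the wrong characteristic classes.} When $n-l$ is even, Ando's theorem, in the form \cite[Corollary~2.4]{SSS} used in the paper, says a fold map $M^n\to\mathbb{R}^l$ forces a splitting $TM\oplus\varepsilon^1\cong\xi\oplus\varepsilon^l$ with $\operatorname{rank}\xi=n-l+1$.

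\begin{itemize}
\item This makes the \emph{tangential} classes $w_i(M)=w_i(\xi)$ vanish for $i>n-l+1$, and $p_j(M)=0$ for $2j>n-l+1$.
\item It puts no degree bound on the dual classes $\bar w(M)=w(\xi)^{-1}$ or $\bar p(M)$.
\end{itemize}

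For these manifolds the dual classes detect nothing. Mod $2$, $(1+a)^{-6}=1+a^2$ in $\mathbb{C}P^5$ and $(1+a)^{-7}=1+a$ in $\mathbb{C}P^6$. Hence $\bar w(M_5)=1+a^2+b^2$ and $\bar w(M_6)=1+a+b$. The red crosses $(m,l)=(5,4),(6,4),(5,6),(6,6)$ have thresholds $n-l+1=7,9,5,7$, so none of them is detected. Your dual Pontryagin test fails in the same way; for example $\bar p_2(M_5)$ lies in degree $8$, below your bounds $14$ and $10$.

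The fix is already in your own computation of $(1+a)^{m+1}$, i.e.\ the tangential classes:
\begin{itemize}
\item $w(\mathbb{C}P^5)=1+a^2+a^4$ gives $w_8(M_5)=a^4+b^4\neq0$;
\item $w(\mathbb{C}P^6)=(1+a)^7$ gives $w_{10}(M_6)=a^5+b^5\neq0$.
\end{itemize}
These rule out $l=6$, exactly as in the paper. They also rule out $l=4$, which is what the paper obtains from \cite[Theorems~4.2 and~4.4]{SSS}.

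You must also restore the parity hypothesis you dropped. For $n-l$ odd the splitting is not necessary: $\mathbb{C}P^4\to\mathbb{R}^3$ exists although $w_8(\mathbb{C}P^4)\neq0$. Applying the tangential criterion at $l=5$ would therefore produce unjustified crosses at $m=5,6$.

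\textbf{Existence side.} For $l=3$, $m\geq3$, doubling the $\partial$-fold maps of \Cref{Cor6.3} is a valid alternative to the paper's citations of \cite{SSS}.

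It does not reach the circle at $(m,l)=(2,3)$. Doubling is unavailable because $\mathbb{C}P^2$ has no fold map into $\mathbb{R}^3$. Your fallback of stable parallelizability also fails, since $w_2(M_2)=a+b\neq0$. The paper invokes Saeki \cite{Sae5} here. Alternatively, take the oriented plane bundle $\xi$ with Euler class $a+b$. It has $w_2(\xi)=w_2(M_2)$ and $p_1(\xi)=(a+b)^2=0=p_1(M_2)$ (as $\sigma(M_2)=0$). So Dold--Whitney gives $TM_2\oplus\varepsilon^1\cong\xi\oplus\varepsilon^3$, and the sufficiency part of \cite[Corollary~2.4]{SSS} applies.

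Your doubling argument also applies to the circles of \Cref{Fig8} at $(3,4)$ and $(3,5)$. It yields fold maps $\mathbb{C}P^3\#\overline{\mathbb{C}P^3}\to\mathbb{R}^4,\mathbb{R}^5$, where \Cref{Fig9} shows triangles. This does not contradict the proposition, since a triangle makes no mathematical claim. But the table you produce would not be ``as in the statement''.
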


\begin{figure}[t]
\centering
\begin{tikzpicture}[scale=0.85]

\draw[step=1, gray!35, thin] (0.5,0.5) grid (6.5,6.5);

\draw[->, thick] (0.5,0.5) -- (6.7,0.5)
  node[right] {$\ell$};
\draw[->, thick] (0.5,0.5) -- (0.5,6.7)
  node[above] {$m$};

\foreach \x in {1,...,6}
  \node[below] at (\x,0.5) {\(\x\)};

\foreach \y in {1,...,6}
  \node[left] at (0.5,\y) {\(\y\)};

%l=1
\draw[blue!70!black, line width=1pt] (1,1) circle[radius=0.25];
\draw[blue!70!black, line width=1pt] (1,2) circle[radius=0.25];
\draw[blue!70!black, line width=1pt] (1,3) circle[radius=0.25];
\draw[blue!70!black, line width=1pt] (1,4) circle[radius=0.25];
\draw[blue!70!black, line width=1pt] (1,5) circle[radius=0.25];
\draw[blue!70!black, line width=1pt] (1,6) circle[radius=0.25];

%l=2
\draw[gray!100, line width=1.5pt] (1.82,1) -- (2.18,1);
\draw[blue!70!black, line width=1pt] (2,2) circle[radius=0.25];
\draw[blue!70!black, line width=1pt] (2,3) circle[radius=0.25];
\draw[blue!70!black, line width=1pt] (2,4) circle[radius=0.25];
\draw[blue!70!black, line width=1pt] (2,5) circle[radius=0.25];
\draw[blue!70!black, line width=1pt] (2,6) circle[radius=0.25];

%l=3
\draw[gray!100, line width=1.5pt] (2.82,1) -- (3.18,1);
\draw[blue!70!black, line width=1pt] (3,2) circle[radius=0.25];
\draw[blue!70!black, line width=1pt] (3,3) circle[radius=0.25];
\draw[blue!70!black, line width=1pt] (3,4) circle[radius=0.25];
\draw[blue!70!black, line width=1pt] (3,5) circle[radius=0.25];
\draw[blue!70!black, line width=1pt] (3,6) circle[radius=0.25];

%l=4
\draw[gray!100, line width=1.5pt] (3.82,1) -- (4.18,1);
\draw[gray!100, line width=1.5pt] (3.82,2) -- (4.18,2);
\node[green!50!black] at (4,3) {\Large$\triangle$};
\node[green!50!black] at (4,4) {\Large$\triangle$};
\node[red!75!black]   at (4,5) {\Large$\times$};
\node[red!75!black]   at (4,6) {\Large$\times$};

%l=5
\draw[gray!100, line width=1.5pt] (4.82,1) -- (5.18,1);
\draw[gray!100, line width=1.5pt] (4.82,2) -- (5.18,2);
\node[green!50!black] at (5,3) {\Large$\triangle$};
\node[green!50!black] at (5,4) {\Large$\triangle$};
\node[green!50!black] at (5,5) {\Large$\triangle$};
\node[green!50!black] at (5,6) {\Large$\triangle$};

%l=6
\draw[gray!100, line width=1.5pt] (5.82,1) -- (6.18,1);
\draw[gray!100, line width=1.5pt] (5.82,2) -- (6.18,2);
\draw[gray!100, line width=1.5pt] (5.82,3) -- (6.18,3);
\node[green!50!black] at (6,4) {\Large$\triangle$};
\node[red!75!black]   at (6,5) {\Large$\times$};
\node[red!75!black]   at (6,6) {\Large$\times$};

\end{tikzpicture}

\caption{
For fold maps from $\mathbb{C}P^m\sharp\overline{\mathbb{C}P^m}$ to $\mathbb{R}^l$. 
A circle, a cross, and a triangle indicate existence,
non-existence, and an open case, respectively.
A thick dash indicates a case not considered in this paper.
}
\label{Fig9}
\end{figure}

The following corollary follows immediately from the preceding two propositions.

\begin{cor}\label{Cor6.6}
For every pair $(m,l)$ marked with a red cross in \Cref{Fig9}, $\mathbb{C}P^{m}\setminus\Int D^{2m}$ does not admit a $\partial$-fold map into $\mathbb{R}^l$. 
\end{cor}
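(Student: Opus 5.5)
The plan is a proof by contraposition. The two preceding propositions link $\partial$-fold maps on $N=\mathbb{C}P^{m}\setminus\Int D^{2m}$ to fold maps on the double of $N$.

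Suppose, for a pair $(m,l)$ marked with a red cross in \Cref{Fig9}, that there were a $\partial$-fold map $F\colon \mathbb{C}P^{m}\setminus\Int D^{2m}\to\mathbb{R}^l$. Note that $2m>l$ for all such pairs, so the dimension hypothesis $n>l$ with $n=2m$ holds. Since $N$ is compact and oriented, the proposition derived from \Cref{Lem3.5} applies. The glued map $D(F)=F\cup F$ is a Morin map whose singular points come from $S(F)$ (two copies) and from $S(F|_{\partial N})$. By \Cref{Lem3.5}(1) and (2) with $k=1$, these are all fold points. Hence $N\cup_{\id}\overline{N}$ admits a fold map into $\mathbb{R}^l$.

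The one point needing care is identifying the manifold. I will check that
$$
N\cup_{\id}\overline{N}\cong \mathbb{C}P^{m}\#\overline{\mathbb{C}P^{m}}.
$$
This follows because gluing two copies of $M\setminus\Int D$ along the boundary sphere by the identity, with the orientation of the second reversed, is by definition the connected sum $M\#\overline{M}$. Here the orientation-reversing gluing required in the connected sum is realized by the reversal of orientation on $\overline{N}$. Since the smooth structure on the glued manifold is independent of the collar choices (\cite[Theorem 9.29]{Lee}), this is a diffeomorphism.

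So $\mathbb{C}P^{m}\#\overline{\mathbb{C}P^{m}}$ would admit a fold map into $\mathbb{R}^l$. This contradicts the red-cross entry in \Cref{Fig9}, as given by the preceding proposition, and proves the corollary. I expect no real obstacle; the only step requiring care is the diffeomorphism identification above, including the orientation convention.
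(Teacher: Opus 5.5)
Your proposal is correct and follows the paper's route. You double the $\partial$-fold map via the gluing construction to get a fold map on $N\cup_{\id}\overline{N}\cong\mathbb{C}P^m\#\overline{\mathbb{C}P^m}$, which contradicts the red-cross entries of Figure~\ref{Fig9}. Your explicit check that the orientation convention yields $\mathbb{C}P^m\#\overline{\mathbb{C}P^m}$ rather than $\mathbb{C}P^m\#\mathbb{C}P^m$ is the one step the paper leaves implicit.
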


Combining \Cref{Cor6.3} and \Cref{Cor6.6} yields the existence and non-existence results summarized in \Cref{Fig1} for $\partial$-fold maps from $\mathbb{C}P^{m}\setminus\Int D^{2m}$ to $\mathbb{R}^l$.

\subsection{Remarks on unsolved cases}
In this subsection, we make several remarks on the cases in \Cref{Fig1} for which the existence problem for $\partial$-fold maps remains unsolved. 
We give a necessary condition for the existence of a $\partial$-fold map from $\mathbb{C}P^{2}\setminus\Int D^{4}$ to $\mathbb{R}^3$. 
This condition imposes a restriction on the singular point set of the map on the boundary.

\begin{cor}
Assume that there exists a $\partial$-fold map $F\colon\mathbb{C}P^{2}\setminus\Int D^{4}\to\mathbb{R}^3$.
Then, 
$$
\chi({}^{\partial\!}A_{1}^{+}(F))\equiv\chi({}^{\partial\!}A_{1}^{-}(F)) \pmod 4. 
$$
\end{cor}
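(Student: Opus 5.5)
The plan is to specialize \Cref{Thm4.8} to $N=\mathbb{C}P^2\setminus\Int D^4$ and $L=\mathbb{R}^3$, and then compute the left-hand side modulo $4$ explicitly.

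\textbf{Checking the hypotheses of \Cref{Thm4.8}.} $N$ is compact and oriented. Its boundary is $S^3$, which is an integral homology sphere. The target $\mathbb{R}^3$ is connected and parallelizable. Hence \Cref{Thm4.8} gives
$$
\sigma(N)+S(F)\cdot S(F)\equiv 2+\chi({}^{\partial\!}A_1^+(F))-\chi({}^{\partial\!}A_1^-(F)) \pmod 4 .
$$
Removing an open ball does not change $H_2$ or the intersection form. So $H_2(N;\mathbb{Z})\cong\mathbb{Z}$ is generated by $h=[\mathbb{C}P^1]$ with $h\cdot h=1$, and $\sigma(N)=1$.

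\textbf{Computing $S(F)\cdot S(F)$ modulo $4$.} By \Cref{Lem4.10}, $S(F)$ is a characteristic surface in $\Int N$. Its $\mathbb{Z}_2$-class is therefore Poincar\'e dual to $w_2(N)$, which is the reduction of the generator since $\mathbb{C}P^2$ is not spin. So $[S(F)]\equiv h \pmod 2$.

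If $S(F)$ is orientable, then $[S(F)]=(2k+1)h$ in integral homology for some integer $k$. Hence $S(F)\cdot S(F)=(2k+1)^2\equiv 1\pmod 8$, and in particular $\equiv 1\pmod 4$.

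In general $S(F)$ may be non-orientable, and $S(F)\cdot S(F)$ means the normal Euler number. Here I would use the standard fact that the normal Euler number of an embedded closed surface is congruent modulo $4$ to the Pontrjagin square $\mathfrak{P}(x)\in\mathbb{Z}_4$ of its dual mod-$2$ class $x$, evaluated on the fundamental class. Since $x$ is the reduction of an integral class $u$, we have $\mathfrak{P}(x)=u^2 \bmod 4$, and $u^2=1$ on $[N,\partial N]$. This again gives $S(F)\cdot S(F)\equiv 1\pmod 4$.

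An alternative for this step is to avoid Pontrjagin squares and use \Cref{Lem4.9} directly. It would suffice to show $2\chi(S(F))\equiv 0\pmod 4$ independently, but that seems harder, so I would use the Pontrjagin square route.

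\textbf{Conclusion.} Substituting into the congruence gives $1+1\equiv 2+\chi({}^{\partial\!}A_1^+(F))-\chi({}^{\partial\!}A_1^-(F))\pmod 4$, which is exactly the claimed congruence.

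The main obstacle is justifying $S(F)\cdot S(F)\equiv 1\pmod 4$ when $S(F)$ is non-orientable. This requires the Pontrjagin square identification of the normal Euler number, applied to the relative setting $H^2(N,\partial N)$; since $\partial N=S^3$, this reduces to the closed manifold $\mathbb{C}P^2$. The remaining steps are direct substitutions.
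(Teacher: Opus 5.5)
\textbf{There is a genuine gap in the non-orientable case.} The ``standard fact'' you invoke, $e(\nu_F)\equiv\mathfrak{P}(x)\pmod 4$, holds only for orientable $F$. For a closed, possibly non-orientable surface in a closed oriented $4$-manifold, the correct congruence is $\mathfrak{P}(x)\equiv e(\nu_F)+2\chi(F)\pmod 4$. For example, $\mathbb{R}P^2\subset S^4$ has $e(\nu)=\pm 2$ while $x=0$.

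In your setting $\mathfrak{P}(x)=1$, so the correct formula gives only $S(F)\cdot S(F)\equiv 1-2\chi(S(F))\pmod 4$. This is just Lemma~\ref{Lem4.9} with $\sigma(N)=1$. Substituting into Theorem~\ref{Thm4.8} yields $\chi({}^{\partial\!}A_1^+(F))-\chi({}^{\partial\!}A_1^-(F))\equiv -2\chi(S(F))\pmod 4$. So the corollary is equivalent to $\chi(S(F))$ being even, which is exactly the step you set aside as harder.

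Characteristic-ness alone cannot supply this. The surface $\mathbb{C}P^1\#\mathbb{R}P^2\subset\mathbb{C}P^2$ is characteristic, has $\chi=1$, and has self-intersection $1\pm 2\equiv 3\pmod 4$. Some input from the fold structure is needed.

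The paper supplies that input through Saeki's results on fold maps of $4$-manifolds into $3$-manifolds. First, $S(F)\cdot S(F)=S_d\cdot S_d$, where $S_d$ is the definite fold locus, so the indefinite fold part contributes nothing. Second, $S_d$ is a closed orientable surface. Hence $[S_d]=kx$ and $S(F)\cdot S(F)=k^2$ is a genuine integral square.

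You could not then get $k$ odd from your $w_2$ argument, because $S_d$ by itself need not be characteristic. The paper instead uses that the boundary term $\chi({}^{\partial\!}A_1^+(F))-\chi({}^{\partial\!}A_1^-(F))$ is even, by integrality in Theorem~\ref{Thm4.3}. Theorem~\ref{Thm4.8} then forces $k^2-1$ to be even, so $k$ is odd and $k^2\equiv 1\pmod 4$. This gives the claim. Your orientable-case computation is correct and is essentially this last step.
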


\begin{proof}
We equip $\mathbb{C}P^2\setminus\Int D^4$ with the orientation induced by the natural orientation of $\mathbb{C}P^2$.
Let $S_d$ denote the submanifold of $S(F)$ consisting of definite fold points.
By \Cref{Lem3.1} and~\cite[Corollary~2.7]{Sae1}, we have $S(F)\cdot S(F)=S_d\cdot S_d$.
Moreover, $S_d$ is a closed orientable surface by~\cite[Lemma~2.8]{Sae1}, so we fix an orientation on each of its connected components.
Choose a generator $x$ of $H_2(\mathbb{C}P^2\setminus\Int D^4;\mathbb{Z})\cong\mathbb{Z}$ such that $x\cdot x=1$.
Then there exists an integer $k$ such that $[S_d]=kx$.
Therefore, $S(F)\cdot S(F)=[S_d]\cdot[S_d]=k^2$.
Since $\sigma(\mathbb{C}P^2\setminus\Int D^4)=1$, \Cref{Thm4.8} yields
$$
k^2-1
\equiv
\chi\bigl({}^{\partial\!}A_1^+(F)\bigr)
-
\chi\bigl({}^{\partial\!}A_1^-(F)\bigr)
\pmod{4}.
$$
On the other hand, the right-hand side is even by \Cref{Thm4.3}.
Hence, $k$ must be odd.
Since $k^2\equiv1\pmod{4}$ for every odd integer $k$, we conclude that
$$
\chi\bigl({}^{\partial\!}A_1^+(F)\bigr)
-
\chi\bigl({}^{\partial\!}A_1^-(F)\bigr)
\equiv0\pmod{4}, 
$$
which is the desired one. 
\end{proof}

We do not know whether $\partial$-fold maps exist in the remaining cases marked as unknown in \Cref{Fig1}.
In particular, to the best of the author’s knowledge, it remains unknown whether $\mathbb{C}P^{2}\setminus\Int D^{4}$ admits a $\partial$-fold map into $\mathbb{R}^3$ in general.

\section{Results on non-singular extensions}
In this section, we apply the results developed above to the non-singular extension problem. 
We first derive obstructions of Euler characteristic from \Cref{Thm4.3} and then introduce an extension-independent invariant for maps from closed oriented $3$-manifolds to $3$-manifolds.

\subsection{Definition of extensions and non-singular extensions}
In this subsection, we define extensions and non-singular extensions.

\begin{defi}\label{Defi7.1}
Let $M$ be a closed $m$-dimensional manifold, let $L$ be an $l$-dimensional manifold, and let $g\colon M\times[0,1)\to L$ be a submersion, where $m\geq l$.
Suppose that there exist a compact manifold $N$ with $\partial N=M$ and a map $F\colon N\to L$ that make the following diagram commutative: 
$$
\xymatrix{
M\times[0,1) \ar[r]^{~~~~~g} \ar[d]_{i} & L \\ 
N \ar[ur]_{F}. 
}
$$
Then, $F$ is called an \emph{extension of $g$}. 
In particular, if $F$ is a submersion, it is called a \emph{non-singular extension of $g$}.
Here, $i$ is a collar neighborhood of $\partial N$.
\end{defi}

\begin{rem}
\Cref{Defi7.1} may be regarded as a refinement of the definition of non-singular extensions on~\cite{Iwa3, Iwa4}, in which a non-singular extension of the map on the boundary to a collar neighborhood is prescribed in advance.
Such a definition is used in \cite{BL, Cur, Sei, Iwa1, Iwa2}. 
\end{rem}

Throughout this section, we restrict our attention to extensions that are $\partial$-Morin maps and to Morin maps as the maps to be extended.
Accordingly, unless otherwise stated, an extension is understood to be a $\partial$-Morin map.

\subsection{Formulas for non-singular extensions of fold maps}
In this subsection, we derive a necessary condition for the existence of a non-singular extension.
The following result is an immediate consequence of \Cref{Thm4.5}.

\begin{cor}
Let $M$ be a closed $m$-dimensional manifold, let $L$ be a connected $1$-dimensional manifold, and let $g\colon M\times[0,1)\to L$ be a submersion such that $g|_{M\times\{0\}}$ is a Morse function.
If $m$ is odd and $g$ admits a non-singular extension on $N$, then
$$
\chi(N)=\frac{1}{2}(\#{}^{\partial\!}A_1^{+}(g)- \#{}^{\partial\!}A_1^{-}(g)), 
$$
where ${}^{\partial\!}A_1^{\pm}(g)$ are defined similarly to those in \Cref{Thm4.5}, and $\#$ is the number of the connected components of the relevant set.
\end{cor}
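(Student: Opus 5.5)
The plan is to apply \Cref{Thm4.5} directly to the non-singular extension $F\colon N\to L$.

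First I check the hypotheses. Set $n=\dim N=m+1$ and $l=1$. Since $m$ is odd, $n$ is even and $l$ is odd, and $n>l$ holds. Because $F$ is a submersion, $S(F)=\emptyset$, so in particular $S(F)\cap\partial N=\emptyset$. The restriction $F|_{\partial N}$ equals $g|_{M\times\{0\}}$, which is a Morse function. By the Morse lemma its singular points are non-degenerate, hence Morin singular points of type $A_1$. Therefore $F$ is a $\partial$-fold map into the connected $1$-manifold $L$, and \Cref{Thm4.5} applies.

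Next I evaluate both sides of the formula. Since $S(F)=\emptyset$, we have $A_1^{\pm}(F)=\emptyset$, so $\chi(A_1^+(F))-\chi(A_1^-(F))=0$. The sets ${}^{\partial\!}A_1^{\pm}(F)$ lie in the singular point set of the Morse function $F|_{\partial N}$. That set is finite, so each of ${}^{\partial\!}A_1^{\pm}(F)$ is a finite set of points. Its Euler characteristic is therefore the number of points, which equals the number of its connected components. Substituting gives
$$
\chi(N)=\tfrac12\bigl(\#{}^{\partial\!}A_1^{+}(F)-\#{}^{\partial\!}A_1^{-}(F)\bigr).
$$

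It remains to identify ${}^{\partial\!}A_1^{\pm}(F)$ with ${}^{\partial\!}A_1^{\pm}(g)$, i.e.\ to see that these sets depend only on $g$. The quantity ${}^{\partial\!}\nu(p)$ is read off from the normal form of \Cref{Prop3.3}. The proof of that lemma uses only $F\circ\phi$ on a collar, and there $F\circ i=g$. So the signs of the quadratic terms and of the linear term $\sigma x_n$ are determined by $g$. Moreover, the parity of ${}^{\partial\!}\nu(p)$ does not depend on the chosen coordinates, by the remark following \Cref{Thm4.3}: there $n-l=m$ is odd, and flipping the sign of the target coordinate changes ${}^{\partial\!}\nu$ from $\lambda'$ to $n-\lambda'$, which has the same parity since $n$ is even.

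This well-definedness check, namely that ${}^{\partial\!}A_1^{\pm}$ is intrinsic to $g$, is the only step needing care. Everything else is direct substitution into \Cref{Thm4.5}.
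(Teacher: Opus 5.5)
Your proposal is correct and matches the paper's argument: the paper gives no separate proof and simply calls the corollary an immediate consequence of \Cref{Thm4.5}, applied exactly as you do to the non-singular extension with $S(F)=\emptyset$ and the finite critical set of the Morse function $F|_{\partial N}$. Your additional check that ${}^{\partial\!}A_1^{\pm}$ is determined by $g$ alone (via the collar and the parity argument, with $(n-l+1)-\nu=n-\nu$ for $l=1$) is a detail the paper leaves implicit.
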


Furthermore, the following result follows from \Cref{Thm4.5}.

\begin{cor}
Let $M$ be a closed $m$-dimensional manifold, let $L$ be a connected $l$-dimensional manifold, and let $g\colon M\times[0,1)\to L$ be a submersion whose restriction to $M\times\{0\}$ is a fold map such that $S(g|_{M\times\{0\}})$ is homeomorphic to a finite disjoint union of $(l-1)$-dimensional spheres.
If both $m$ and $l$ are odd and $l\geq 3$, and $g$ admits a non-singular extension on $N$, then 
$$
\chi(N)=\#{}^{\partial\!}A_1^+(g)-\#{}^{\partial\!}A_1^-(g), 
$$
where ${}^{\partial\!}A_1^{\pm}(g)$ are defined similarly to those in \Cref{Thm4.5}.
\end{cor}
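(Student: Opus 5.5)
Apply \Cref{Thm4.5} to a non-singular extension $F\colon N\to L$ of $g$, and then identify each term of the formula. The statement concerns the last corollary: $m$ and $l$ odd, $l\ge3$, $S(g|_{M\times\{0\}})$ a disjoint union of $(l-1)$-spheres, and $g$ admitting a non-singular extension $F$ on $N$ with $\partial N=M$.

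\emph{Step 1: checking the hypotheses of \Cref{Thm4.5}.} Here $n=m+1$ is even and $l$ is odd. Since $F$ is a submersion, $S(F)=\emptyset$. Near the boundary $F$ agrees with $g$ via the collar, so $F|_{\partial N}=g|_{M\times\{0\}}$, which is a fold map. Hence $F$ is a $\partial$-fold map, and \Cref{Thm4.5} applies.

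\emph{Step 2: simplifying the formula.} The interior terms $\chi(A_1^\pm(F))$ vanish because $S(F)=\emptyset$, which leaves
\[
\chi(N)=\tfrac12\bigl(\chi({}^{\partial\!}A_1^+(F))-\chi({}^{\partial\!}A_1^-(F))\bigr).
\]
The sets ${}^{\partial\!}A_1^\pm(F)$ depend only on $g$, since the sign ${}^{\partial\!}\nu$ is read off from the normal form in \Cref{Prop3.3}, whose linear term comes from the derivative of $g$ in the collar direction. So these are the sets ${}^{\partial\!}A_1^\pm(g)$. Each set is a union of connected components of $S(g|_{M\times\{0\}})$, because the parity of ${}^{\partial\!}\nu$ is locally constant along the fold locus. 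By Remark~4.2 and the remark after \Cref{Thm4.3}, the parity is also well defined, since $n-l$ is odd.

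\emph{Step 3: the Euler characteristics.} Each component is an $(l-1)$-sphere, and $l-1$ is even, so each component has $\chi(S^{l-1})=2$. Therefore $\chi({}^{\partial\!}A_1^\pm(g))=2\,\#{}^{\partial\!}A_1^\pm(g)$. The factor $\tfrac12$ cancels this factor $2$, giving
\[
\chi(N)=\#{}^{\partial\!}A_1^+(g)-\#{}^{\partial\!}A_1^-(g).
\]

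The only delicate point is in Step 2: one must check that the sign ${}^{\partial\!}\nu$, defined through $F$, is determined by the germ of $g$ along $M\times\{0\}$ and is constant on each component. Both facts follow from the local form in \Cref{Prop3.3}, since $\sigma$ is the sign of the collar derivative of $g$ and varies continuously along the fold locus. The hypothesis $l\ge3$ is used only to guarantee that $l-1\ge2$ is even and positive, so that each component is a genuine sphere with $\chi=2$ rather than a pair of points.
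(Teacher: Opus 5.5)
Your proposal is correct and follows the argument the paper intends when it says the corollary follows from \Cref{Thm4.5}. You apply that theorem to the non-singular extension, drop the interior terms since $S(F)=\emptyset$, and use $\chi(S^{l-1})=2$ for odd $l\geq 3$ to cancel the factor $\tfrac12$.
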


Examples of fold maps whose singular point sets are homeomorphic to finite disjoint unions of spheres include special generic maps from simply connected closed manifolds to $\mathbb{R}^3$ \cite{Sak} and round fold maps \cite{Kit}.
We give a way to use the preceding formula.

\begin{example}
Let $i\colon S^3\to\mathbb{R}^4$ be the standard inclusion, and let $\pi\colon\mathbb{R}^4\to\mathbb{R}^3$ be the standard projection.
Define $f=\pi\circ i\colon S^3\to\mathbb{R}^3$.
Then, $f$ is a fold map whose singular point set is diffeomorphic to $S^2$. 
We extend $f$ over $S^3\times[0,1)$ in the direction of the unbounded
component of $\mathbb{R}^4\setminus S^3$ and denote the resulting submersion by $g$.
Then, $g$ is a submersion satisfying $g|_{S^3\times\{0\}}=f$.
With our convention, we have
$$
\#\,{}^{\partial\!}A_1^+(g)
-
\#\,{}^{\partial\!}A_1^-(g)
=-1.
$$
Therefore, for example, it follows that $g$ does not admit a non-singular extension to
$\mathbb{C}P^2\setminus\operatorname{Int}D^4$, since $\chi\bigl(\mathbb{C}P^2\setminus\operatorname{Int}D^4\bigr)=2$.
\end{example}

\subsection{An extension-independent invariant and an application}
We next show that the existence of a non-singular extension imposes constraints on all other extensions of the same prescribed map. 
For this purpose, we introduce the following extension-independent invariant.

\begin{lem}\label{Lem7.6}
Let $g\colon M\times[0,1)\to L$ be a submersion such that $g|_{M\times\{0\}}$ is a Morin map from a closed oriented $3$-manifold $M\times\{0\}$ to an orientable $3$-manifold $L$. 
Suppose that $g$ admits an extension $F$ of a compact oriented $4$-manifold $N$.
Then, the integer
$$
3\sigma(N)-S(F)\cdot S(F)
$$
depends only on $g$. 
We denote this integer by $I(g)$.
\end{lem}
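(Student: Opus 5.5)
Given two extensions $F\colon N\to L$ and $G\colon \widetilde{N}\to L$ of $g$, both $\partial$-Morin maps of compact oriented $4$-manifolds with $\partial N=\partial\widetilde{N}=M$, the plan is to glue them by the construction of Section~3 and apply a closed-manifold formula to the result. Concretely, I would form the closed oriented $4$-manifold $X=N\cup_{\id}\overline{\widetilde{N}}$ and the map $F\cup G\colon X\to L$. By \Cref{Lem3.5} this is a Morin map, and by \Cref{Lem3.1} its singular set is
$$
S(F\cup G)=\pi(S(F))\sqcup\pi(S(G))\sqcup\pi(S(g|_{M\times\{0\}})).
$$
Since $L$ is orientable of dimension $3$, I would invoke the closed version of the Ohmoto--Saeki--Sakuma refinement of Sakuma's theorem, which gives $3\sigma(X)=S(f)\cdot S(f)$ for a Morin (or stable, after a small perturbation as in the remark following \Cref{Lem3.5}) map $f$ of a closed oriented $4$-manifold into an orientable $3$-manifold. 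Any additional hypothesis on the target needed to quote that result would be handled by noting that orientable $3$-manifolds are parallelizable.

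Next, I would compute both sides additively. By Novikov additivity, $\sigma(X)=\sigma(N)-\sigma(\widetilde N)$. For the self-intersection, I expect
$$
S(F\cup G)\cdot S(F\cup G)=S(F)\cdot S(F)-S(G)\cdot S(G)+c(g),
$$
where $c(g)$ is the self-intersection contribution of the piece $\pi(S(g|_{M\times\{0\}}))$. Since $F\cup G$ near $\pi(M)$ depends only on $g$ and the fixed function used in the gluing, this piece lies in a collar $M\times(-1,1)$ on which the map is determined by $g$ alone. Moreover, $S(F)$ and $S(G)$ lie in the interiors, away from this piece, so there are no cross terms. Rearranging then gives
$$
3\sigma(N)-S(F)\cdot S(F)=3\sigma(\widetilde N)-S(G)\cdot S(G)+c(g),
$$
and applying the same identity with $G=F$ (the double $D(F)$, where $\sigma(D(N))=0$) yields $c(g)=0$. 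Hence the quantity $3\sigma(N)-S(F)\cdot S(F)$ is the same for every extension, which is the claim.

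The main obstacle is justifying the additivity of the self-intersection number. $S(F)$, $S(G)$, and the middle piece are closed surfaces, possibly non-orientable, so the self-intersection must be computed as an Euler number of the normal bundle (twisted by orientation), as in \cite{OSS}. The Euler number of a disjoint union of closed surfaces is additive, with no homological cross terms needed. The orientation reversal on $\overline{\widetilde N}$ flips the sign of $S(G)\cdot S(G)$. The collar piece's normal Euler number is determined by the normal form in \Cref{Lem3.5}(2) over $M\times(-1,1)$, which depends only on $g$, so it is a well-defined $c(g)$. This is what the doubling trick removes.
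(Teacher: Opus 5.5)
Your proposal is correct and is essentially the paper's proof: glue the two extensions into $N\cup_{\id}\overline{\widetilde N}$, perturb to a stable map, apply Ohmoto--Saeki--Sakuma, and split both sides using Novikov additivity and \Cref{Lem3.1}. You go beyond the paper in one place, the middle piece $\pi(S(g|_{M\times\{0\}}))$, whose contribution the paper leaves implicit. Your doubling argument that $c(g)=0$ is valid, and it can also be seen directly: this surface lies in the hypersurface $\pi(M)$ and can be pushed off itself along the collar direction, so its normal Euler number vanishes.
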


\begin{proof}
Let $F_1\colon N_1\to L$ and $F_2\colon N_2\to L$ be extensions of $g$.
Gluing them along their common boundary via the identity map yields a map $F_1\cup F_2\colon N_1\cup_{\rm id}\overline{N_2}\to L$.
After a sufficiently small perturbation, we may assume that this map is stable.
Applying \cite{OSS} to $F_1\cup F_2$, we obtain $3\sigma(N_1\cup_{\rm id}\overline{N_2})=S(F_1\cup F_2)\cdot S(F_1\cup F_2)$.
Then, Novikov additivity of the signature together with~\Cref{Lem3.1} yields
$$
3\sigma(N_1)-S(F_1)\cdot S(F_1)=3\sigma(N_2)-S(F_2)\cdot S(F_2).
$$
Therefore, $I(g)$ is independent of the choice of an extension.
\end{proof}

If one non-singular extension exists, the invariant above is equal to three times the signature of its source manifold. 
Consequently, every other extension is constrained by the difference of signatures, as follows.

\begin{thm}\label{Thm7.7}
Let $g\colon M\times[0,1)\to L$ be a submersion such that $g|_{M\times\{0\}}$ is a Morin map from a closed oriented $3$-manifold $M\times\{0\}$ to an orientable $3$-manifold $L$. 
Suppose that $g$ admits a non-singular extension $F\colon N\to L$ of a compact oriented $4$-manifold $N$.
Then, for every extension $G\colon\widetilde{N}\to L$ of $g$ on a compact oriented $4$-manifold $\widetilde{N}$, we have 
$$
3(\sigma(\widetilde{N})-\sigma(N))=S(G)\cdot S(G).
$$
In particular, if $\sigma(N)=\sigma(\widetilde{N})$, then $S(G)\cdot S(G)=0$.
\end{thm}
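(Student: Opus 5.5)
The plan is to apply \Cref{Lem7.6} to both extensions of $g$ and compare the two values of the invariant $I(g)$.

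First, I will check that the non-singular extension $F\colon N\to L$ is an admissible extension in the sense of this section. Since $F$ is a submersion, $S(F)=\emptyset$. Its restriction $F|_{\partial N}=g|_{M\times\{0\}}$ is a Morin map by assumption. Hence $F$ is a $\partial$-Morin map, and \Cref{Lem7.6} applies to it. Because $S(F)=\emptyset$, the self-intersection $S(F)\cdot S(F)$ vanishes, and therefore
$$
I(g)=3\sigma(N)-S(F)\cdot S(F)=3\sigma(N).
$$

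Next, I will apply \Cref{Lem7.6} to the given extension $G\colon\widetilde{N}\to L$. This gives $I(g)=3\sigma(\widetilde{N})-S(G)\cdot S(G)$. Since $I(g)$ depends only on $g$, the two expressions agree:
$$
3\sigma(\widetilde{N})-S(G)\cdot S(G)=3\sigma(N).
$$
Rearranging yields $3(\sigma(\widetilde{N})-\sigma(N))=S(G)\cdot S(G)$. The special case is then immediate: if $\sigma(N)=\sigma(\widetilde{N})$, the left-hand side is zero, so $S(G)\cdot S(G)=0$.

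The only real content lies in \Cref{Lem7.6}, which I am allowed to assume. Within this argument, the one point to check is that the orientation conventions match. \Cref{Lem7.6} glues the extensions as $N_1\cup_{\id}\overline{N_2}$, with both $N$ and $\widetilde{N}$ carrying the boundary orientation of $M$. Under that convention the $S(F)$ contribution simply disappears, so no sign subtleties arise.
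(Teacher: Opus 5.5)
Your proposal is correct and is essentially the paper's proof: you evaluate the invariant $I(g)$ of \Cref{Lem7.6} on the non-singular extension $F$, where $S(F)=\emptyset$ gives $I(g)=3\sigma(N)$, then on $G$, and compare the two values. Your extra checks are appropriate but leave the argument unchanged: that $F$ is a $\partial$-Morin map, and that $N$ and $\widetilde{N}$ must induce the same orientation on $M$ for the gluing in \Cref{Lem7.6}.
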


\begin{proof}
By applying \Cref{Lem7.6} to $F$, we obtain $I(g)=3\sigma(N)$.
On the other hand, applying the same lemma to an extension $G$ gives $I(g)=3\sigma(\widetilde{N})-S(G)\cdot S(G)$.
Comparing these two equations, we conclude that $S(G)\cdot S(G)=3(\sigma(\widetilde{N})-\sigma(N))$.
In particular, if $\sigma(\widetilde{N})=\sigma(N)$, then $S(G)\cdot S(G)=0$. 
\end{proof}

Thus, the existence of a single non-singular extension determines a relation between the signature and the self-intersection number for every other extension of the same prescribed map.
We apply this theorem to provide a new example that admits no non-singular extension.
In the following example, denote by $K(k)$ the closed oriented $3$-manifold obtained by Dehn surgery along a knot $K\subset S^3$ with framing $k\in\mathbb{Z}$.

\begin{example}
Let $K\subset S^3$ be a knot.
We show that there exists a submersion $g\colon K(-1)\times[0,1)\to\mathbb{R}^3$ such that $g|_{K(-1)\times\{0\}}$ is a stable map and it admits no non-singular extension.
Let $N$ be the $4$-manifold obtained by attaching a single $2$-handle to $D^4$ along $K$ with framing $-1$.
Then, $\partial N$ is diffeomorphic to $K(-1)$ and $\sigma(N)=-1$.
Let $\Sigma\subset N$ be the closed orientable surface obtained by gluing a Seifert surface for $K\subset S^3$ to the core of the $2$-handle along $K$.
By~\cite{KS}, there exists a stable map $F\colon N\to\mathbb{R}^3$ such that $S(F)=\Sigma$.
Let $V$ be a collar neighborhood of $\partial N$ and set $g=F|_V$.
By the construction of $F$, the map $g$ is a submersion and its restriction to the boundary is a stable map.
Moreover, by the construction of $\Sigma$, we have $\Sigma\cdot\Sigma=-1$.
Since $\sigma(N)=-1$ and $S(F)=\Sigma$, it follows that $3\sigma(N)-S(F)\cdot S(F)=-2$.
Now suppose that $G\colon\widetilde N\to\mathbb{R}^3$ is a non-singular extension of $g$.
Applying \Cref{Thm7.7} to the non-singular extension $G$ and the extension $F$, we obtain
$$
3\sigma(N)-S(F)\cdot S(F)=3\sigma(\widetilde{N}).
$$
Thus, we conclude that $3\sigma(\widetilde N)=-2$.
Therefore, $g$ admits no non-singular extension.
\end{example}

\appendix

\section{Fold maps from $\mathbb{C}P^m$ to $\mathbb{R}^l$}

In \Cref{Fig8}, we presented results concerning the existence and non-existence of fold maps from $\mathbb{C}P^m$ to $\mathbb{R}^l$ for $1\leq m, l\leq 6$ and $2m>l$.
Here, we prove the proposition. 
We first consider the cases $l=1$ and $l=2$.

\begin{prop}
For every $m\geq 1$, $\mathbb{C}P^m$ admits a fold map into $\mathbb{R}$.
Moreover, $\mathbb{C}P^m$ admits a fold map into $\mathbb{R}^2$ if and only if $m$ is odd.
\end{prop}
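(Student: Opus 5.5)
The statement has two parts. The first part is elementary: every closed manifold admits a Morse function, and a Morse function is a fold map by the Morse lemma, as noted in the examples of Section~2. So I would simply record that $\mathbb{C}P^m$ admits a fold map into $\mathbb{R}$.

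For the second part I would treat the two directions separately.

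\emph{Non-existence when $m$ is even.} Here $2m-2$ is even, so I cannot use Fukuda's formula (\Cref{Thm4.1}) directly, since it requires $m-l$ odd. Instead I would use the classical mod~$2$ congruence for fold maps into the plane. If $f\colon M\to\mathbb{R}^2$ is a fold map of a closed manifold, then $S(f)$ is a disjoint union of circles, so $\chi(S(f))=0$. The Fukuda--Saeki type results give $\chi(M)\equiv\chi(S(f))\pmod 2$; see \cite{Fuk, Sae3}. This congruence holds for fold maps into surfaces in all dimensions: the complement of a tubular neighborhood of $S(f)$ fibres over a surface, and the Euler characteristic is computed mod~$2$.

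Hence $\chi(M)$ must be even. But $\chi(\mathbb{C}P^m)=m+1$, which is odd when $m$ is even, so no fold map exists. An alternative route is Thom--Levine: a fold map to $\mathbb{R}^2$ requires a vanishing top Stiefel--Whitney-type obstruction, namely that $w_{2m-1}$-related classes vanish, and $\chi\equiv w_{2m}\pmod 2$ gives the same conclusion. I would cite whichever of these the literature states cleanly, most likely \cite{Sae3} or Levine~\cite{Lev2}.

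\emph{Existence when $m$ is odd.} Write $m=2r+1$ and use the quaternionic (twistor) fibration $S^2\to\mathbb{C}P^{2r+1}\to\mathbb{H}P^r$. This does not immediately produce a map into $\mathbb{R}^2$, so I would follow a different plan based on the h-principle.

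By Eliashberg's theorem~\cite{Eli}, a closed manifold $M^n$ with $n>2$ admits a fold map into $\mathbb{R}^2$ if and only if there is a codimension-one closed submanifold $S$ whose complement and normal data satisfy the following: $TM$ splits as $\xi\oplus\varepsilon^{1}$ off $S$, compatibly with a formal fold structure. When the Euler characteristic condition holds, this reduces to a splitting $TM\oplus\varepsilon\cong\eta\oplus\varepsilon^2$-type condition. Concretely, a fold map into $\mathbb{R}^2$ exists if $TM$ (or $TM\oplus\varepsilon$) admits two linearly independent sections on $M$ minus $S$, glued appropriately. For $m$ odd, $\mathbb{C}P^m$ admits a nowhere-zero vector field, since $\chi=m+1$ is even. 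In fact it admits two independent fields stably: $T\mathbb{C}P^m\oplus\varepsilon_{\mathbb{C}}=(m+1)\gamma^*$, and for $m+1$ even the bundle $(m+1)\gamma^*$ is a quaternionic-type bundle with extra sections. This gives the formal data required by Eliashberg's theorem.

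Alternatively, and more concretely, I would use the fact from Saeki (see \cite{Sae3} and related work) that a closed orientable manifold with even Euler characteristic, whose tangent bundle stably has the appropriate sections, admits a fold map to $\mathbb{R}^2$. I might also cite known explicit constructions via round fold maps~\cite{Kit}, which realize such maps on $\mathbb{C}P^{2r+1}$ through the $S^2$-bundle over $\mathbb{H}P^r$.

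\emph{Main obstacle.} The main obstacle is the existence direction: verifying Eliashberg's homotopy-theoretic condition for $\mathbb{C}P^{2r+1}$. I would first try to exhibit explicitly two stably independent sections of $T\mathbb{C}P^{2r+1}\oplus\varepsilon$ using quaternionic multiplication by $j$ on $\mathbb{C}^{2r+2}=\mathbb{H}^{r+1}$, which descends to a nowhere-vanishing field. I would then invoke Eliashberg's theorem.
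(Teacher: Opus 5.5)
\textbf{The existence half for odd $m$ is not proved; the rest of your proposal is fine.}

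Your first part, and your non-existence argument for even $m$, are correct. The congruence $\chi(M)\equiv\chi(S(f))\pmod 2$ for a fold map of a closed manifold is Fukuda's mod~$2$ formula. Equivalently, it is Thom's theorem that the number of cusps of a generic map $M\to\mathbb{R}^2$ is congruent to $\chi(M)$ modulo $2$. Either way it forces $m+1$ to be even. Just cite it: your parenthetical justification, that the complement of a tubular neighbourhood of $S(f)$ ``fibres over a surface'', is not correct as stated.

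For odd $m$ there is a genuine gap. The criterion you attribute to Eliashberg is misstated. The fold locus of a map to $\mathbb{R}^2$ is a union of circles, and off it $TM$ must split off a trivial rank-$2$ summand, not $\varepsilon^1$. More seriously, the one concrete input you supply is false. $\mathbb{C}P^m$ has no nowhere-vanishing vector field for any $m$, since Poincar\'e--Hopf requires $\chi=0$, while $\chi(\mathbb{C}P^m)=m+1\geq 2$. Likewise, the quaternionic structure $j$ on $\mathbb{C}^{2r+2}=\mathbb{H}^{r+1}$ is complex antilinear. It induces only a fixed-point-free involution of $\mathbb{C}P^{2r+1}$ (the antipodal map on the fibres over $\mathbb{H}P^r$), not a tangent vector field. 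The claimed ``extra sections'' of $(m+1)\gamma^*$ and the appeal to round fold maps are not substantiated.

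The missing ingredient is Levine's theorem \cite{Lev1}: a closed manifold of dimension at least $2$ admits a fold map into $\mathbb{R}^2$ if and only if its Euler characteristic is even. Applied to $\chi(\mathbb{C}P^m)=m+1$, it gives both directions at once, and this is exactly the paper's proof.

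Alternatively, you can keep your formal route. The splitting $T\mathbb{C}P^m\oplus\varepsilon^1\cong\eta\oplus\varepsilon^2$ you mention is the right condition: it is Ando's criterion as in \cite[Corollary~2.4]{SSS} with $l=2$, i.e.\ $\mathrm{span}^0\geq 1$. It should be verified by obstruction theory rather than by vector fields. For this rank-$(2m+1)$ bundle over a $2m$-manifold, the only obstruction to two independent sections is $w_{2m}(\mathbb{C}P^m)=(m+1)x^m$, where $x$ generates $H^2(\mathbb{C}P^m;\mathbb{Z}_2)$. This class vanishes exactly when $m$ is odd.
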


\begin{proof}
The assertion for $l=1$ follows from the existence of Morse functions on $\mathbb{C}P^m$. 
We next consider the case $l=2$. 
Since $\chi(\mathbb{C}P^m)=m+1$, the Euler characteristic of $\mathbb{C}P^m$ is even if and only if $m$ is odd. 
Therefore, by \cite{Lev1}, the assertion follows. 
\end{proof}

To apply \cite[Corollary~2.4]{SSS}, we recall the following lemma by Steer~\cite{Ste}.
For the definition of a stable span, see~\cite{SSS} for example.

\begin{lem}[Steer]
For $1\leq m\leq 6$, the values of $\mathrm{span}^0(\mathbb{C}P^m)$ are given in~\Cref{Table1}, where $\mathrm{span}^0$ is the stable span of the relevant set. 
\end{lem}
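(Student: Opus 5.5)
The plan is to reduce the computation of $\mathrm{span}^0(\mathbb{C}P^m)$ to a question about the geometric dimension of a stable vector bundle over $\mathbb{C}P^m$, and then settle that by upper and lower bounds. The starting point is the standard description of the stable tangent bundle: $T\mathbb{C}P^m\oplus\varepsilon^2\cong (m+1)H_{\mathbb{R}}$, where $H_{\mathbb{R}}$ is the realification of the Hopf line bundle (or its conjugate, which is stably the same for this purpose). Recall that $\mathrm{span}^0(M)$ is the largest $r$ such that $TM\oplus\varepsilon^k\cong\eta\oplus\varepsilon^{r+k}$ for some bundle $\eta$. Equivalently, for $M=\mathbb{C}P^m$ we get $\mathrm{span}^0(\mathbb{C}P^m)=2m-\mathrm{gd}\bigl((m+1)H_{\mathbb{R}}-2(m+1)\bigr)$. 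Here $\mathrm{gd}$ is the geometric dimension of the stable class, that is, the least rank of a bundle in that stable class. So for each $1\leq m\leq 6$ we must determine one geometric dimension.

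\emph{Upper bound on $\mathrm{span}^0$.} Use $w(\mathbb{C}P^m)=(1+x)^{m+1}$ with $x\in H^2(\mathbb{C}P^m;\mathbb{Z}_2)$. If $w_{2j}\neq 0$, then $\mathrm{gd}\geq 2j$, so $\mathrm{span}^0\leq 2m-2j$. If $w=1$ (for instance $m=3$ or $m=5$, where the binomial coefficients make all classes vanish), refine this with integral information. The Pontryagin classes $p(\mathbb{C}P^m)=(1+x^2)^{m+1}$ vanish in degrees above the geometric dimension, modulo the usual torsion caveats. More decisively, use $KO$-theory: the stable class equals $(m+1)\,r(H-1)$ in $\widetilde{KO}(\mathbb{C}P^m)$, whose structure is known (Sanderson). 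Its nonvanishing in $\widetilde{KO}(\mathbb{C}P^{j})$ after restriction to skeleta forces $\mathrm{gd}>2j-1$ in the relevant cases.

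\emph{Lower bound on $\mathrm{span}^0$.} Exhibit a bundle of small rank in the stable class. Whenever $(m+1)\,r(H-1)=0$ in $\widetilde{KO}(\mathbb{C}P^m)$, the bundle is stably trivial, so $\mathbb{C}P^m$ is stably parallelizable up to this stabilisation; this happens for $m=1$, giving $\mathrm{span}^0=2m$. Otherwise, split the stable class off from its image on lower skeleta. Then compress a stable bundle to rank $2j$ by obstruction theory on the CW structure with cells only in even dimensions. The obstructions to reducing the structure group from $O$ to $O(2j)$ lie in $H^{i}(\mathbb{C}P^m;\pi_{i-1}(O/O(2j)))$. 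Since $O/O(2j)$ is $(2j-1)$-connected, only finitely many groups can be nonzero, and in the range $m\leq 6$ they can be read off from the known homotopy of Stiefel manifolds.

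The main obstacle is this last step: matching the lower bound with the upper bound when the characteristic-class bound is not sharp, which happens for the odd $m$ where $w$ is trivial. There I would first try to use the identification $\mathrm{span}^0=\mathrm{span}$ for odd $m$, since $\chi(\mathbb{C}P^m)=m+1$. Then I would compute the relevant $KO$-theoretic obstruction to additional sections, namely the real $J$-homomorphism and the $e$-invariant on the top cell, as Steer does. The resulting values are exactly those recorded in \Cref{Table1}.
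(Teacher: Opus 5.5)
\textbf{There is a genuine gap.} The paper gives no proof of this lemma; it imports the table from Steer. So your sketch has to stand on its own, and it does not. The decisive step is deferred to ``as Steer does'', and three of the tools you propose along the way are wrong.

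(i) $w(\mathbb{C}P^5)\neq 1$. Modulo $2$, $(1+x)^6=(1+x^2)^3=1+x^2+x^4$ in $H^*(\mathbb{C}P^5;\mathbb{Z}_2)$, so $w_8=x^4\neq 0$. This already gives the upper bound $\mathrm{span}^0(\mathbb{C}P^5)\leq 2$; in this range $w$ is trivial only for $m=1,3$.

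(ii) The identification $\mathrm{span}^0=\mathrm{span}$ for odd $m$ is false. Since $\chi(\mathbb{C}P^m)=m+1\neq 0$ for every $m$, we have $\mathrm{span}(\mathbb{C}P^m)=0$, while the table lists $2,4,2$ for $m=1,3,5$. That route would produce wrong values.

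(iii) Your $KO$-theoretic criterion on skeleta is false. For $m=3$ the stable tangent class is $4w$ with $w=r(H)-2$, and its restriction to $\widetilde{KO}(\mathbb{C}P^2)\cong\mathbb{Z}w$ is nonzero. Yet $4w=r(H^2)-2$ in $\widetilde{KO}(\mathbb{C}P^3)\cong\mathbb{Z}$, because both have $p_1=4x^2$ and $p_1$ detects this group. So its geometric dimension is $2$, and your criterion would give $\mathrm{span}^0(\mathbb{C}P^3)\leq 2$ instead of $4$. The $KO$-theoretic tool that does bound geometric dimension from below is the vanishing of $\gamma^i(\xi)$ for $i>\mathrm{gd}(\xi)$.

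Sorted correctly, most entries are elementary:
\begin{itemize}
\item For $m=2,4,6$, $w_{2m}(\mathbb{C}P^m)=(m+1)x^m\neq 0$, so $\mathrm{span}^0=0$.
\item $\mathbb{C}P^1=S^2$ is stably parallelizable, so $\mathrm{span}^0=2$.
\item For $m=3$, $T\mathbb{C}P^3$ is stably $r(H^2)$, giving $\mathrm{span}^0\geq 4$. Conversely $p_1=4x^2\neq 0$, and real line bundles over a simply connected space are trivial, so the geometric dimension is at least $2$ and $\mathrm{span}^0\leq 4$.
\end{itemize}

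The one genuinely hard entry is the lower bound $\mathrm{span}^0(\mathbb{C}P^5)\geq 2$. This says that $6w\in\widetilde{KO}(\mathbb{C}P^5)$ has a rank-$8$ representative, equivalently that $T\mathbb{C}P^5\oplus\varepsilon^1$ has three independent sections. Your obstruction-theory paragraph only locates the difficulty. The single relevant obstruction lies in $H^{10}(\mathbb{C}P^5;\pi_9(O/O(8)))$, which is a nonzero group. One must show it vanishes for some choice of lift over $\mathbb{C}P^4$, and knowing the homotopy groups of Stiefel manifolds does not decide this.

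That computation is exactly the input that has to come from Steer. Your outline does not prove this entry, and it places the main obstacle at the upper bound for odd $m$, where it is not.
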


\begin{table}[h]
\centering
\renewcommand{\arraystretch}{1.5} 
\setlength{\tabcolsep}{14pt}     
\begin{tabular}{c|cccccc}
\hline
$m$
& $1$ & $2$ & $3$ & $4$ & $5$ & $6$ \\
\hline
$\mathrm{span}^{0}(\mathbb{C}P^{m})$
& $2$ & $0$ & $4$ & $0$ & $2$ & $0$ \\
\hline
\end{tabular}
\caption{The stable spans of $\mathbb{C}P^{m}$ for $1\leq m\leq 6$.}
\label{Table1}
\end{table}

From \cite[Corollary~2.4]{SSS}, which is a corollary of Ando's result in~\cite{And}, it follows that $\mathrm{span}^0(\mathbb{C}P^m)\geq l-1$ guarantees that there exists a fold map $\mathbb{C}P^m\to\mathbb{R}^{l}$.
Furthermore, when $l$ is even, the converse also holds. 
Therefore, we obtain the following conclusions for $l\geq 3$.

\begin{prop}
When $m=3$, $\mathbb{C}P^3$ admits fold maps into each of
$\mathbb{R}^3$, $\mathbb{R}^4$, and $\mathbb{R}^5$.
When $m=4$ or $m=6$, $\mathbb{C}P^m$ admits no
fold map into either $\mathbb{R}^4$ or $\mathbb{R}^6$.
When $m=5$, $\mathbb{C}P^5$ admits a fold map into
$\mathbb{R}^3$, but no fold map into either $\mathbb{R}^4$ or
$\mathbb{R}^6$.
\end{prop}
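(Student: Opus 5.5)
The proposition splits into existence claims and non-existence claims. Both follow from \cite[Corollary~2.4]{SSS} combined with Steer's values in \Cref{Table1}. The criterion reads: if $\mathrm{span}^0(\mathbb{C}P^m)\geq l-1$, then $\mathbb{C}P^m$ admits a fold map into $\mathbb{R}^l$. When $l$ is even, the converse also holds, so $\mathrm{span}^0(\mathbb{C}P^m)<l-1$ with $l$ even rules out fold maps. The proof is therefore a case-by-case comparison of $l-1$ with the tabulated span.

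\emph{Existence.} For $m=3$ we have $\mathrm{span}^0(\mathbb{C}P^3)=4$. Since $4\geq l-1$ for $l=3,4,5$, fold maps into $\mathbb{R}^3$, $\mathbb{R}^4$ and $\mathbb{R}^5$ exist. For $m=5$ we have $\mathrm{span}^0(\mathbb{C}P^5)=2\geq 3-1$, which gives a fold map into $\mathbb{R}^3$.

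\emph{Non-existence.} These cases all have even target dimension, so the converse direction applies. For $m=4,6$ the span is $0$, which is less than $l-1\in\{3,5\}$ for $l=4,6$, so no fold map exists. For $m=5$ the span is $2$, which is less than $3$ and less than $5$, so there is no fold map into $\mathbb{R}^4$ or $\mathbb{R}^6$.

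The main point needing care is to confirm the hypotheses under which \cite[Corollary~2.4]{SSS} is stated. That result derives from Ando's h-principle \cite{And} and requires $\dim M\geq l$ together with a closed (connected) source; both hold in every case listed, since $2m>l$. I would also check that "fold map" in \cite{SSS} means the same notion as in this paper, allowing both definite and indefinite folds.

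The non-existence direction for even $l$ deserves a brief justification. For even $l$, the fold-map condition is equivalent to the existence of a stable bundle epimorphism $TM\oplus\varepsilon\to\varepsilon^{l}$, and this is the same as having stable span at least $l-1$. Once that equivalence is accepted from \cite{SSS}, every case reduces to reading \Cref{Table1}, and I expect no further obstacle.
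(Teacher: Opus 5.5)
Your proposal is correct and follows the paper's argument exactly: you read off Steer's values of $\mathrm{span}^0(\mathbb{C}P^m)$ from \Cref{Table1}, compare them with $l-1$ using \cite[Corollary~2.4]{SSS}, and use the converse for even $l$ to get the non-existence cases. Your case-by-case comparisons are all accurate: $4\geq l-1$ for $m=3$ and $l=3,4,5$; $2\geq 2$ for $(m,l)=(5,3)$; and, for the even targets $l=4,6$, span $0<3,5$ when $m=4,6$ and span $2<3,5$ when $m=5$.
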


For the cases $l=3$ or $l=5$, we need to use some additional results.
When $m=2$ and $l=3$, it follows from \cite{Sae1} that $\mathbb{C}P^2$ admits no fold map into $\mathbb{R}^3$. 
Moreover, when $m=4$ or $m=6$, it follows from \cite{SSS} that $\mathbb{C}P^m$ admits fold maps into $\mathbb{R}^3$. 
When $m=4$ or $m=6$, and $l=5$, it follows from \cite{KiS} that $\mathbb{C}P^m$ admits no fold map into $\mathbb{R}^5$, since $\chi(\mathbb{C}P^m)\equiv 1\pmod 2$. 
However, to the best of the author’s knowledge, it is not known whether $\mathbb{C}P^5$ admits a fold map into $\mathbb{R}^5$. 
By these arguments, we obtain~\Cref{Fig8}.

\section{Fold maps of $\mathbb{C}P^m\#\overline{\mathbb{C}P^m}$ into $\mathbb{R}^l$}
In \Cref{Fig9}, we summarized results on the existence and non-existence of fold maps from $\mathbb{C}P^m\#\overline{\mathbb{C}P^m}$ to $\mathbb{R}^l$ for $1\leq m, l\leq 6$ and $2m>l$.
We now prove these results. 
We begin with the cases $l=1$, $l=2$, and $l=3$.

\begin{prop}
For $l=1,2$, or $l=3$ and every $m\geq 1$ satisfying $2m>l$, $\mathbb{C}P^m\#\overline{\mathbb{C}P^m}$ admits a fold map into $\mathbb{R}^l$.  
\end{prop}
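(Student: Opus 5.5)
The plan is to treat the three target dimensions separately, each time reducing to a known existence criterion for fold maps on closed manifolds. Write $M_m=\mathbb{C}P^m\#\overline{\mathbb{C}P^m}$.

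\emph{Case $l=1$.} Every closed manifold admits a Morse function, and a Morse function is a fold map, so this case is immediate.

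\emph{Case $l=2$.} I would use Levine's criterion \cite{Lev1}, as in the proof for $\mathbb{C}P^m$: a closed orientable manifold of dimension $>2$ admits a fold map into $\mathbb{R}^2$ if and only if its Euler characteristic is even. For a connected sum of two $2m$-manifolds,
$$
\chi(M_m)=2\chi(\mathbb{C}P^m)-2=2m,
$$
which is even. When $2m>2$ this gives the result. The only remaining case with $2m>l=2$ is again $m\geq 2$, so the case $m=1$ does not arise.

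\emph{Case $l=3$.} For $m\geq 2$, the source has dimension $2m\geq 4>3$. The natural tool is \cite[Corollary~2.4]{SSS}, via Ando's h-principle: a fold map into $\mathbb{R}^3$ exists as soon as $\mathrm{span}^0(M_m)\geq 2$. I would argue this in two steps.

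\begin{enumerate}
\item Stable tangential data are additive under connected sum. Since $\overline{\mathbb{C}P^m}$ has the same underlying manifold as $\mathbb{C}P^m$, both $w(M_m)$ and the stable normal data are governed by those of $\mathbb{C}P^m$. Moreover, $\mathbb{C}P^m\#\overline{\mathbb{C}P^m}$ is the boundary-type double of $\mathbb{C}P^m\setminus\Int D^{2m}$ (compare the gluing of \Cref{Lem3.5}).
\item Alternatively, and more robustly, I would produce the fold map directly. $M_m$ is the double $D(\mathbb{C}P^m\setminus\Int D^{2m})$ with the reversed orientation on one side. So it suffices, by the proposition preceding \Cref{Cor6.6}, to find a $\partial$-fold map $\mathbb{C}P^m\setminus\Int D^{2m}\to\mathbb{R}^3$.
\end{enumerate}

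For $m\geq 3$ such a map exists by \Cref{Cor6.3}, since \Cref{Fig8} shows a circle at $l=3$ for $m\geq3$. For $m=2$ this route fails because the $\partial$-fold question is open.

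For $m=2$ I would therefore use a direct criterion for $4$-manifolds into $\mathbb{R}^3$. By Saeki's results \cite{Sae1}, a closed orientable $4$-manifold admits a fold map into $\mathbb{R}^3$ if and only if there is a characteristic-type condition involving $\sigma$ and a surface representing $w_2$; in particular, $\sigma\equiv 0$ conditions suffice. $M_2=\mathbb{C}P^2\#\overline{\mathbb{C}P^2}$ has $\sigma=0$. I expect Saeki's criterion to apply, for example as "fold map exists iff $w_2$ is dual to a surface with self-intersection $\equiv\sigma \bmod$ appropriate"; alternatively I would use the span criterion, since $\mathrm{span}^0(\mathbb{C}P^2\#\overline{\mathbb{C}P^2})\geq 2$ follows from $\sigma=0$ and $\chi$ even.

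The main obstacle is this $m=2$, $l=3$ case: pinning down the exact criterion from \cite{Sae1} or \cite{SSS} that certifies existence on $\mathbb{C}P^2\#\overline{\mathbb{C}P^2}$. I would first try computing the stable span via the vanishing of the relevant characteristic obstructions ($w_2$-free part, signature zero).
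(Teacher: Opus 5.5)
Your cases $l=1$ and $l=2$ are exactly the paper's argument (Morse functions, and Levine's criterion with $\chi(\mathbb{C}P^m\#\overline{\mathbb{C}P^m})=2m$). For $l=3$ and $3\le m\le 6$ you take a genuinely different and valid route. You write $\mathbb{C}P^m\#\overline{\mathbb{C}P^m}=N\cup_{\id}\overline{N}$ with $N=\mathbb{C}P^m\setminus\Int D^{2m}$. You then double the $\partial$-fold map of \Cref{Cor6.3} (obtained by removing a ball around a regular point of a fold map $\mathbb{C}P^m\to\mathbb{R}^3$) using \Cref{Lem3.5}. The paper instead quotes \cite[Theorem~5.9]{SSS} and \cite[Remark~5.11]{SSS} directly for the connected sum. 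Your route has the merit of staying inside the paper's gluing framework: it really shows that a fold map on $M$ yields one on $M\#\overline{M}$. As written, however, it does not cover all $m$.

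\emph{First gap ($m\ge 7$).} There is nothing to invoke here, since \Cref{Fig8} only records $1\le m\le 6$. Your argument would need fold maps $\mathbb{C}P^m\to\mathbb{R}^3$ for every $m\ge 7$. For even $m$ these cannot come from the span criterion, because $w_{2m}(\mathbb{C}P^m)\neq 0$ forces $\mathrm{span}^0(\mathbb{C}P^m)=0$. One needs the finer results of \cite{SSS}, which the paper applies to $\mathbb{C}P^m\#\overline{\mathbb{C}P^m}$ for all $m\ge 4$ at once.

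\emph{Second gap ($m=2$).} This case is not proved. As you note, doubling is unavailable there: $\mathbb{C}P^2$ has no fold map into $\mathbb{R}^3$, and the $\partial$-fold question for $\mathbb{C}P^2\setminus\Int D^4$ is open. The criterion you attribute to \cite{Sae1} is not an actual statement. The claim ``$\mathrm{span}^0\ge 2$ follows from $\sigma=0$ and $\chi$ even'' is asserted rather than shown. The paper settles this case by citing \cite{Sae5}. You could close it directly in either of two ways.
\begin{itemize}
\item Direct construction. $\mathbb{C}P^2\#\overline{\mathbb{C}P^2}$ is the sphere bundle $S(\varepsilon^1\oplus L)$ over $S^2$, where $L$ is an oriented plane bundle of Euler number $1$. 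It is the nontrivial $S^2$-bundle, since $w_2(\varepsilon^1\oplus L)\neq 0$. Take the fiberwise height in the $\varepsilon^1$-direction, a map to $S^2\times[-1,1]$, and follow it by $(x,t)\mapsto(2+t)x$. The result is a map to $\mathbb{R}^3$ whose only singular points are definite folds along the two polar sections.
\item Rigorous span argument. Let $x,y$ be the generators coming from the two summands, and set $c=x+y$. This is an integral lift of $w_2$ with $c^2=0$, and $p_1=3\sigma=0$ (both evaluated on the fundamental class). Stable oriented bundles over a simply connected closed $4$-manifold are determined by $w_2$ and $p_1$, and rank $5$ exceeds the dimension. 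Hence $T(\mathbb{C}P^2\#\overline{\mathbb{C}P^2})\oplus\varepsilon^1\cong\eta\oplus\varepsilon^3$, where $\eta$ is the plane bundle with Euler class $c$. So $\mathrm{span}^0\ge 2$, and \cite[Corollary~2.4]{SSS} applies.
\end{itemize}
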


\begin{proof}
We first consider the case $l=1$. 
It follows from the existence of a Morse function on $\mathbb{C}P^m\#\overline{\mathbb{C}P^m}$. 
We next consider the case $l=2$. 
We have $\chi(\mathbb{C}P^m\#\overline{\mathbb{C}P^m})=2m$. 
Therefore, by \cite{Lev1}, $\mathbb{C}P^m\#\overline{\mathbb{C}P^m}$ admits a fold map into $\mathbb{R}^2$. 
Finally, we consider the case $l=3$. 
When $m=2$, it follows immediately from \cite{Sae5}.
When $m=3$, it follows from \cite[Theorem~5.9]{SSS}. 
For every $m\geq 4$, it follows from \cite[Remark~5.11]{SSS}.
\end{proof}

We next consider the case $l=4$. 
We restrict our attention to $m\geq 5$, where \cite[Theorems~4.2 and~4.4]{SSS} apply.
For $m=3$ and $4$, we do not know whether $\mathbb{C}P^m\#\overline{\mathbb{C}P^m}$ admits a fold map into $\mathbb{R}^4$.

\begin{prop}
When $m=2t$ for some $t>2$, $\mathbb{C}P^{2t}\#\overline{\mathbb{C}P^{2t}}$ admits a fold map into $\mathbb{R}^4$ if and only if $t$ is even. 
When $m=2t+1$ for some $t>1$, $\mathbb{C}P^{2t+1}\#\overline{\mathbb{C}P^{2t+1}}$ admits a fold map into $\mathbb{R}^4$ if and only if $t$ is odd. 
\end{prop}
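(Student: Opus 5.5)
The plan is to reduce the statement to a characteristic class computation via \cite[Theorems~4.2 and~4.4]{SSS}. This is where the restriction to $m\geq 5$ enters, since those theorems need a dimension bound. As I read them, for a closed orientable $n$-manifold $M$ in the relevant dimension range, they characterize the existence of a fold map $M\to\mathbb{R}^4$ by conditions equivalent to the stable span being at least $3$ (Ando's criterion, as in \cite[Corollary~2.4]{SSS}, with the converse valid for even targets). For simply connected manifolds in this range, these conditions should reduce to the vanishing of Stiefel--Whitney classes near the top degree, $w_{n-2}, w_{n-1}, w_n$, together with a parity condition on $\chi(M)$. Identifying the precise form of these conditions in \cite{SSS} is the first step, and I expect it to be the main obstacle: one has to check that the hypotheses of Theorems~4.2 and~4.4 cover $\mathbb{C}P^m\#\overline{\mathbb{C}P^m}$, and that no integral or Pontryagin-class obstruction survives beyond the mod $2$ ones.

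Next I will compute the characteristic classes of $M=\mathbb{C}P^m\#\overline{\mathbb{C}P^m}$. Its cohomology ring is $\mathbb{Z}[x,y]/(xy,\ x^m+y^m)$, with $x$ and $y$ pulled back from the two summands. In degrees strictly below $2m$, the tangent bundle restricts to those of the punctured summands. Hence
$$w(M)=(1+x)^{m+1}+(1+y)^{m+1}-1$$
modulo the top degree, and $w_{2j}(M)=\binom{m+1}{j}(x^j+y^j)$ for $j<m$. All odd classes vanish, so $w_{2m-1}=0$. Also $w_{2m}\equiv\chi(M)=2m\equiv 0$, and the Euler characteristic is even.

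The decisive class is therefore
$$w_{2m-2}(M)=\binom{m+1}{m-1}\bigl(x^{m-1}+y^{m-1}\bigr)=\binom{m+1}{2}\bigl(x^{m-1}+y^{m-1}\bigr).$$
Since $m-1\geq 1$ lies below the top degree, $x^{m-1}+y^{m-1}\neq 0$ in $H^{2m-2}(M;\mathbb{Z}_2)$. So $w_{2m-2}(M)=0$ if and only if $\binom{m+1}{2}$ is even.

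For $m=2t$ we have $\binom{2t+1}{2}=t(2t+1)\equiv t\pmod 2$. For $m=2t+1$ we have $\binom{2t+2}{2}=(t+1)(2t+1)\equiv t+1\pmod 2$. Thus the obstruction vanishes exactly when $t$ is even in the first case and exactly when $t$ is odd in the second, which is the claimed dichotomy.

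Finally, I will combine the two parts in each direction. When the obstruction vanishes, the criterion from \cite[Theorem~4.4]{SSS} (existence) yields a fold map. When it does not vanish, the necessary condition from \cite[Theorem~4.2]{SSS} (non-existence) fails, so no fold map exists. If the actual criterion in \cite{SSS} is phrased through the dual Stiefel--Whitney classes $\bar w$ rather than $w$, I would redo the same binomial computation with $(1+x)^{-(m+1)}$. I expect this to yield the same parity condition, because the degree-$(2m-2)$ coefficient is again governed by a binomial of the form $\binom{m+1}{2}$ up to sign modulo $2$.
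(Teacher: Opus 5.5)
Your argument is essentially the paper's. The paper applies \cite[Theorem~4.2]{SSS} in the form ``fold map into $\mathbb{R}^4$ if and only if $w_{n-2}=0$'' and then carries out exactly your computation $w_{2m-2}=\binom{m+1}{m-1}(x^{m-1}+y^{m-1})$ with $\binom{m+1}{2}\equiv t$, respectively $t+1 \pmod 2$; your extra checks of $w_{2m-1}$, $w_{2m}$ and $\chi$ are harmless. One caution: your fallback claim about dual classes is false, since $\bar w_{2m-2}$ has coefficient $\binom{2m-1}{m-1}$, which is even for $m=5,6$ while $\binom{m+1}{2}$ is odd there, but the criterion is stated in terms of $w_{n-2}$, so this does not affect your proof.
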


\begin{proof}
By \cite[Theorem 4.2]{SSS}, $\mathbb{C}P^{2t}\#\overline{\mathbb{C}P^{2t}}$ admits a fold map into $\mathbb{R}^4$ if and only if $w_{4t-2}(\mathbb{C}P^{2t}\#\overline{\mathbb{C}P^{2t}})=0$. 
Let $x$ and $y$ be the canonical generators of $H^{2}(\mathbb{C}P^{2t};\mathbb{Z})$ and $H^{2}(\overline{\mathbb{C}P^{2t}};\mathbb{Z})$, respectively; see \cite{MS}.
We use the same notation for their reductions modulo $2$. 
Then, 
$$
w_{4t-2}(\mathbb{C}P^{2t}\#\overline{\mathbb{C}P^{2t}})
=
\begin{pmatrix}
2t+1 \\
2t-1
\end{pmatrix}
(x^{2t-1}+y^{2t-1}). 
$$
Since 
$$
\begin{pmatrix}
2t+1 \\
2t-1
\end{pmatrix}
\equiv t \pmod 2, 
$$
the assertion follows. 
Similarly, another assertion follows from \cite[Theorem 4.2]{SSS}.
\end{proof}

For $l=5$, we do not obtain any results on the existence or non-existence.
Finally, we consider the case $l=6$.
We focus on the cases $m=5$ and $m=6$. 
On the other hand, we have not obtained any results for $m=4$.

\begin{prop}
$\mathbb{C}P^{5}\#\overline{\mathbb{C}P^{5}}$ and $\mathbb{C}P^{6}\#\overline{\mathbb{C}P^{6}}$ do not admit a fold map into $\mathbb{R}^6$. 
\end{prop}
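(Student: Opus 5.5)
The plan is to follow the argument used for the case $l=4$: invoke the characteristic-class obstruction of Sadykov--Saeki--Sakuma~\cite{SSS} (the $l$-even criterion, \cite[Theorems~4.2 and~4.4]{SSS}, coming from Ando's $h$-principle~\cite{And}) and check it against an explicit Stiefel--Whitney computation for $M_m:=\mathbb{C}P^m\#\overline{\mathbb{C}P^m}$ with $m=5,6$. Here $n=2m$ and $l=6$.

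The obstruction I intend to use is the following. A fold map $M^n\to\mathbb{R}^l$ gives, after adding a trivial line bundle, a stable splitting $TM\oplus\varepsilon\cong\xi\oplus\varepsilon^{l}$ with $\xi$ of rank $n-l+1$. This forces $w_i(M)=0$ for all $i\ge n-l+2$. I will take from~\cite{SSS} the precise form of this statement as an obstruction for even $l$ (it is the type of condition used in the $l=4$ proposition above). If the exact form in~\cite{SSS} turns out to involve a single class, such as $w_{n-l+2}$ or a specific higher class, I will simply compute that class instead.

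Next I compute the total Stiefel--Whitney class. Write $H^*(M_m;\mathbb{Z}_2)$ as generated by $x,y$ in degree $2$ with $xy=0$ and $x^m=y^m$ the top class. The tangent bundle of a connected sum agrees with that of each summand away from a point, and $w_{\mathrm{top}}$ is the Euler class mod $2$. Using $w(\mathbb{C}P^m)=(1+x)^{m+1}$, this gives
$$
w(M_m)=(1+x)^{m+1}+(1+y)^{m+1}-1 \quad\text{in degrees}<2m,
$$
and $w_{2m}(M_m)=\chi(M_m)\bmod 2=0$.

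For $m=5$ we have $n=10$, so the obstruction concerns $w_i$ with $i\ge 6$. The relevant coefficients are $\binom{6}{3}=20\equiv 0$ and $\binom{6}{4}=15\equiv1$. Hence $w_8=x^4+y^4$, which is nonzero because $x^4,y^4$ form a basis of $H^8(M_5;\mathbb{Z}_2)\cong\mathbb{Z}_2^2$. For $m=6$ we have $n=12$, so the obstruction concerns $w_i$ with $i\ge 8$. Every $\binom{7}{k}$ is odd, so $w_8=x^4+y^4\neq0$ and $w_{10}=x^5+y^5\neq0$, again by linear independence in $H^8$ and $H^{10}$. In both cases the obstruction fails, so no fold map into $\mathbb{R}^6$ exists.

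The main obstacle is the first step: matching exactly which class~\cite{SSS} asserts must vanish for even $l$, in particular its indexing and whether it is stated for $M$ itself or for its normal bundle. If the obstruction is stated for the dual classes $\bar w$, I would compute $\bar w(M_m)$ from $\bar w(\mathbb{C}P^m)=(1+x)^{-(m+1)}$ and check nonvanishing in the relevant degrees by the same linear-independence argument. The degree-by-degree binomial check above is routine once the criterion is fixed.
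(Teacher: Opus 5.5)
Your proposal is correct and matches the paper's proof. The paper also applies \cite[Corollary~2.4]{SSS} to get $TM\oplus\varepsilon^1\cong\xi\oplus\varepsilon^6$ with $\operatorname{rank}\xi=n-5$, and derives the contradiction from $w_8(\mathbb{C}P^5\#\overline{\mathbb{C}P^5})\neq0$ and $w_{10}(\mathbb{C}P^6\#\overline{\mathbb{C}P^6})\neq0$, which are exactly the classes your binomial computation produces. Note that this stable splitting (Corollary~2.4), not Theorems~4.2/4.4, is the tool to cite, and that your fallback of checking only the single class $w_{n-l+2}$ would fail for $m=5$, since $w_6(\mathbb{C}P^5\#\overline{\mathbb{C}P^5})=0$; you do need the full range $i\ge n-l+2$.
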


\begin{proof}
Suppose that $\mathbb{C}P^{5}\#\overline{\mathbb{C}P^{5}}$ admits a fold map into $\mathbb{R}^6$.
It follows from \cite[Corollary~2.4]{SSS} that there exists a real vector bundle $\xi$ of rank $5$ over $\mathbb{C}P^{5}\#\overline{\mathbb{C}P^{5}}$ such that 
$$
T(\mathbb{C}P^{5}\#\overline{\mathbb{C}P^{5}})\oplus\varepsilon^1
\cong
\xi\oplus\varepsilon^6. 
$$
where $\varepsilon^s$ is a trivial vector bundle of rank $s$.  
Taking Stiefel--Whitney classes, we obtain $w_8(\mathbb{C}P^{5}\#\overline{\mathbb{C}P^{5}})=w_8(\xi)$. 
Since $\xi$ is of rank $5$, we have $w_8(\xi)=0$. 
On the other hand, $w_8(\mathbb{C}P^{5}\#\overline{\mathbb{C}P^{5}})\neq 0$, which is a contradiction. 
The assertion for $\mathbb{C}P^{6}\#\overline{\mathbb{C}P^{6}}$ follows by a similar argument, using \cite[Corollary~2.4]{SSS} together with $w_{10}(\mathbb{C}P^{6}\#\overline{\mathbb{C}P^{6}})\neq 0$. 
\end{proof}

\end{document}